 \newtheorem{theorem}{Theorem}
 \newtheorem{cor}[theorem]{Corollary}
 \newtheorem{lem}[theorem]{Lemma}
 \newtheorem{prop}[theorem]{Proposition}
 \theoremstyle{definition}
 \newtheorem{defn}[theorem]{Definition}
 \theoremstyle{remark}
 \newtheorem{rem}[theorem]{Remark}
 \newtheorem{ex}{Example}
 \numberwithin{equation}{section}
\begin{document}

\begin{center}
{\bf \large BASIC AUTOMORPHISM GROUPS OF COMPLETE \\

\vspace{2mm}
CARTAN FOLIATIONS }\\
\end{center}

\begin{center}
{ N. I. ZHUKOVA~$^\dag$ AND  K. I. SHEINA~$^\ddag$}\\
\end{center}

{\it $^\dag$, $^\ddag$ National Research University Higher School of Economics, \\
Department of Informatics, Mathematics and Computer Science,\\
ul. Bolshaja Pecherskaja, 25/12, Nizhny Novgorod, 603155, Russia\\
} 
\,\,\,E-mail:{ n.i.zhukova@rambler.ru;\,\,\,kse51091@mail.ru}

\vspace{4mm}
We get sufficient conditions for the full basic automorphism group
of a comp\-le\-te Cartan foliation to admit a unique
(finite-dimen\-sio\-nal) Lie group structure in the ca\-te\-go\-ry of
Cartan foliations. In particular, we obtain sufficient conditions
for this group to be discrete. Emphasize that the transverse Cartan
geometry may
  be noneffective. Some estimates of the dimension of this group
depending on the transverse geometry are found. Further, we
investigate  Cartan foliations covered by fibrations and ascertain their
specification. Examples of computing the full ba\-sic \-
auto\-mor\-phism group of complete Cartan foliations are constructed.

{\it \bf Keynwords: }{foliation; Cartan foliation; Lie group; basic automorphism; auto\-mor\-phism group, foliated bundle.}

{\it \bf 2010 Mathematics Subject Classification: }{53C12; 22Exx; 54H15; 53Cxx} 

\section{Introduction. Main results}

 The automorphism group is associated with every object of a category.
Among central problems there is the question whether the automorphism group can
be endowed with a (finite-dimensional) Lie group structure~\cite{K}.

In the theory of foliations with transverse geometries,
morphisms are understood as local diffeomorphisms mapping leaves
onto leaves and preserving transverse geometries. The group of all
automorphisms of a foliation $(M,F)$ with transverse geometry is
denoted by ${A}(M,F).$ Let ${ A}_L(M,F)$ be the normal
subgroup of ${A}(M,F)$ formed by automorphisms mapping each
leaf onto itself. The quotient group ${ A}(M,F)/{A}_L(M,F)$ is called
{\it the full basic automorphism group} and is denoted by ${A}_B(M,F).$

In the investigation of foliations $(M,F)$ with transverse
geometry it is natural to put the above problem of the existence of a Lie
group structure for the full group ${A}_B(M,F)$ of basic automorphisms of $(M,F).$

J. Leslie \cite{Les} was the first who  solved a similar problem for smooth
foliations on compact manifolds. For foliations with complete
transversally projectable affine connec\-tion this problem was raised
by I.V. Belko~\cite{Bel}.

Foliations $(M,F)$ with effective transverse rigid geometries were
investigated by the first author \cite{ZhR} where an algebraic
invariant $\mathfrak g_0 = \mathfrak g_0(M, F),$ called the structural Lie
algebra of $(M,F)$, was constructed and it was proved that $\mathfrak g_0 = 0$
is a sufficient condition for the  existence of a unique  Lie
group structure in the full basic automorphism group of this
foliation. In the case where $(M, F)$ is a Rie\-man\-nian foliation, the
concept of the structural Lie algebra was introduced previously by
P. Molino \cite{Mo}.

Spaces which we call Cartan geometries were introduced by Elie Cartan in the 1920s
and were called by him {\it espaces g$\acute{e}$n$\acute{e}$raliz$\acute{e}$d}.
The investigation of Cartan geometries (see definition \ref{d3}) gives the
possibility to consider different geometry structures from the unified viewpoint.

We use the notion of Cartan foliation in the sense of R. Blumenthal \cite{Bl}.
We emphasize that the following classes of foliations:  parabolic, conformal,
 Weil,  projective,  pseudo-Riemannian, Lorentzian, Rie\-man\-nian foliations
and foliations with transverse linear connection  belong to  Cartan
foliations. Therefore, all proved by us theorems and corollaries are
valid for all these foliations. Let us denote by $\mathfrak
C\mathfrak F$ the category of Cartan foliations (the definition is
given in subsection \ref{ss2.2}).

In subsection \ref{sseff} we remind the notion of the effective Cartan
geometry. It was shown by the first author (\cite{Min}, Proposition
1) that a Cartan foliation modelled on a noneffective  Cartan
geometry $\xi=(P(N,{H}),\omega)$ of  type $({G},{H})$ admits also an
effective transversal Cartan geometry of the type $({G'},{H'})$
where $G'=G/K$, $H'= H/K$ and $K$ is the kernel of the pair $(G,H)$,
that is the maximal normal subgroup of $G$ belonging to $H$. Due to
this fact we may construct the associated foliated bundle for any
Cartan foliation. Note that in (\cite{Bl},
Proposition 3.1) this construction is not correct in general.

By the structural Lie algebra $\mathfrak g_0 = \mathfrak g_0(M, F)$ of a
complete Cartan foliation  $(M, F)$ we mean the structural Lie
algebra of $(M, F)$ considered with the associated effective
transversal Cartan geometry indicated above.

Let us denote by $A(M, F)$ the  group of all the automorphisms of the
Cartan foliation $(M, F)$ in the category $\mathfrak{C}\mathfrak{F}$ and by
 $A_B(M, F)$ the full basic automorphism group.

Recall that a leaf $L$ of a foliation $(M, F)$ is proper if $L$ is
an embedded submanifold in $M$. A foliation is called proper \cite{Tam} if all
its leaves are proper. A leaf $L$ is said to be closed if $L$ is a
closed subset of $M$. As it is known, any closed leaf is proper.

We get the following theorem about a sufficient condition for the existence a
unique Lie group structure in the group of basic automorphisms
of complete Cartan foliations and some exact estimates of its dimension.

\begin{theorem}\label{Th1} Let $(M, F)$ be a complete  Cartan foliation
modelled on a Cartan geometry of  type $\mathfrak g/\mathfrak h$. If the
structural Lie algebra $\mathfrak g_0 = \mathfrak g_0(M, F)$ is zero, then
the  basic auto\-mor\-phism group $A_B(M, F)$ of this foliation is a
Lie group whose dimension satisfies the inequality
\begin{gather}\label{oz1}
\dim{A}_B(M,F)\leq \dim (\mathfrak g) - \dim(\mathfrak k),
\end{gather}
where ${\mathfrak k}$ is the kernel of the pair $(\mathfrak g,\mathfrak h)$,
i.e., the maximal ideal of the Lie algebra $\mathfrak g$
belonging to $\mathfrak h$,
and the Lie group structure in  $A_B(M, F)$ is unique.

Moreover,
\begin{itemize} \item[(a)] if there exists an isolated proper leaf
or if the set of proper leaves is countable, then
\begin{gather}\label{oz2}
\dim{ A}_B(M,F)\le\dim (\mathfrak h) - dim(\mathfrak k);
\end{gather}
\item[(b)] if the set of proper leaves is countable and dense, then
 \begin{gather}\label{oz3}
\dim{ A}_B(M,F)=0.
\end{gather}
\end{itemize}
The estimates \eqref{oz1}, \eqref{oz2} are exact and the case of $(b)$ is realized.
\end{theorem}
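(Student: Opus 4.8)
The plan is to reduce everything to the \emph{holonomy pseudogroup} of the complete Cartan foliation, equipped with its effective transverse Cartan geometry $\eta = (\mathfrak g, \mathfrak h)$ of type $(G', H')$, and then to exploit the development map / associated foliated bundle $\mathcal R(M,F)$ over a manifold $N$ on which $G'$ acts transitively by automorphisms of the Cartan geometry. The key structural fact I would invoke is the classical dictionary (due here to the first author, building on Molino and Blumenthal): a complete Cartan foliation with $\mathfrak g_0 = 0$ is, up to the structural algebra, ``rigid'' in the sense that every basic automorphism is determined by its action on a single transversal — more precisely the lift of a basic automorphism to $\mathcal R(M,F)$ is an automorphism of the $G'$-bundle commuting with the foliated Cartan connection, so that the group $A_B(M,F)$ embeds into the automorphism group of the model Cartan geometry on $N$, whose automorphism group has dimension $\le \dim G' = \dim\mathfrak g - \dim\mathfrak k$. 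This gives the bound \eqref{oz1} and simultaneously equips $A_B(M,F)$ with a Lie group structure as a closed subgroup of a finite-dimensional Lie group (the automorphism group of a Cartan geometry is always a Lie group of dimension $\le \dim G'$, by Palais/Kobayashi-type arguments).

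Concretely, the steps I would carry out are: (1) Recall from the cited work of the first author that $\mathfrak g_0 = 0$ forces the structural algebra to vanish, hence the closures of the leaves form a nice (regular) partition and the holonomy pseudogroup of $(M,F)$ is, transversally, a pseudogroup of local automorphisms of the effective Cartan geometry $\eta$ on $N$; in particular every global automorphism in $A(M,F)$ induces a well-defined transverse automorphism, and passing to the quotient by $A_L(M,F)$ gives an injective homomorphism $\Phi\colon A_B(M,F) \hookrightarrow \mathrm{Aut}(N,\eta)$ into the (finite-dimensional) automorphism group of the model geometry. (2) Show $\Phi$ has closed image and is a homeomorphism onto it for the natural topologies (compact-open on $M$ downstairs, and the Lie topology on $\mathrm{Aut}(N,\eta)$), thereby transporting the Lie group structure; uniqueness of the Lie structure follows because any two Lie structures inducing the same topology on a group coincide, and the topology is pinned down by the requirement that the natural action on $M/\overline F$ (or on the transverse manifold) be smooth. (3) Deduce \eqref{oz1} from $\dim \mathrm{Aut}(N,\eta) \le \dim G' = \dim\mathfrak g - \dim\mathfrak k$.

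For part (a), the point is that an isolated proper leaf $L$ (or a countable set of proper leaves) must be preserved \emph{setwise} by the identity component $A_B^0(M,F)$: a connected group cannot move $L$ to one of countably many discrete positions, so $A_B^0$ fixes the corresponding point (or countable set of points) in the transverse manifold $N$. Hence $\Phi(A_B^0(M,F))$ lies in the isotropy subgroup of that point, which for an effective Cartan geometry of type $(G',H')$ is a subgroup of $H'$, of dimension $\le \dim H' = \dim\mathfrak h - \dim\mathfrak k$; this yields \eqref{oz2}. For part (b), if the proper leaves are countable \emph{and dense}, then $A_B^0$ must fix each of a dense set of transverse points, and by continuity fixes an open set, whence (by analyticity of the Cartan-geometry automorphism action, or by the rigidity of Cartan geometries — an automorphism fixing a point together with its full isotropy data is the identity) $A_B^0$ is trivial, so $A_B(M,F)$ is discrete, i.e.\ \eqref{oz3}.

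The main obstacle I anticipate is step (2) — establishing that $\Phi$ is a topological embedding onto a \emph{closed} subgroup, since a limit of basic automorphisms a priori only gives a transverse automorphism and one must check it lifts back to a genuine automorphism of $(M,F)$; this is exactly where completeness of the foliation and the vanishing of $\mathfrak g_0$ do the real work, via the associated foliated bundle, and where one must be careful about the noneffective-to-effective reduction so that the kernel $\mathfrak k$ enters the dimension count correctly. The exactness of \eqref{oz1} and \eqref{oz2}, and the realization of case (b), I would settle by explicit examples — e.g.\ suspension (foliated-bundle) constructions over a torus or over a manifold with countable dense fundamental-group-orbit, with fibre the model geometry — which presumably appear in the examples section promised in the introduction.
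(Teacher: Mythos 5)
Your overall architecture (reduce to the effective geometry so that $\mathfrak k$ enters the count, pass to the associated foliated bundle, embed $A_B(M,F)$ into a finite\-/dimensional automorphism group, then use isotropy for (a) and density for (b)) matches the paper's strategy, but the central step is aimed at the wrong target and this is a genuine gap. You propose an injective homomorphism $\Phi\colon A_B(M,F)\hookrightarrow \mathrm{Aut}(N,\eta)$ into the automorphism group of the model transverse Cartan geometry on $N$. No such homomorphism exists in general: a basic automorphism only induces \emph{local} transformations of $N$ (elements normalizing the holonomy pseudogroup), not global automorphisms of $(N,\eta)$ --- $N$ is merely the disjoint target of the cocycle submersions, and nothing acts on it globally. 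The object that actually receives $A_B(M,F)$ is the basic manifold $W=\mathcal R/\mathcal F$ of the lifted foliation, which is a genuine manifold precisely because $\mathfrak g_0=0$ forces $(\mathcal R,\mathcal F)$ to be a locally trivial fibration $\pi_b\colon\mathcal R\to W$; the form $\beta$ descends to an absolute parallelism $\widetilde\beta$ on $W$ with $\dim W=\dim\mathfrak g$, and the paper's Proposition on $\varepsilon\colon A_B(M,F)\to A^H(W,\widetilde\beta)$ supplies the monomorphism into the ($H$-equivariant) automorphism group of this parallelizable manifold. That is where the bound $\dim\mathfrak g-\dim\mathfrak k$, the Lie structure, and its uniqueness (closed subgroups of $A(W,\widetilde\beta)$ carry a unique Lie group topology) all come from. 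Your proposal never identifies $W$, and so the dimension count and the topological-embedding claim in your step (2) have no footing.

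Moreover, the step you flag as ``the main obstacle'' --- that the image is closed (in fact open-closed) --- is exactly the substantive content of the paper's argument and cannot be deferred: one must take a one-parameter group $\phi_t^X$ in $A^H_e(W,\widetilde\beta)$, lift $X$ horizontally to $\mathcal R$ via the Ehresmann connection $\widetilde{\mathfrak M}=\pi^*\mathfrak M$, use $\mathfrak M$-completeness to integrate it, and verify $L_Y\beta=0$ and $[A^*,Y]=0$ so that the flow descends to a genuine automorphism of $(M,F)$. Your arguments for (a) and (b) are the right ideas once transplanted to $W$: the identity component preserves the $H$-orbit over an isolated (or one of countably many) proper leaf, forcing the orbit of $w$ into $w\cdot H$ and hence $\dim\le\dim\mathfrak h-\dim\mathfrak k$; and a dense countable set of preserved leaves gives a dense fixed-point set in the first-countable leaf space $M/F\cong W/H$, killing the identity component. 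But as written, resting on a nonexistent map to $\mathrm{Aut}(N,\eta)$, the proof does not go through.
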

In other words, if the associated lifted foliation $(\mathcal{R},\mathcal{F})$
is formed by fibres of a locally trivial fibration, then the
basic automorphism group of $(M, F)$ is a Lie group.

Examples \ref{E1} -- \ref{E3} show the exactness of estimates \eqref{oz1} and \eqref{oz2}.
In Example \ref{E5} we construct the foliation with the countable dense set of closed leaves
and show the realization of the case $(b)$ of Theorem \ref{Th1}.

The following assertion contains sufficient conditions in terms of topo\-lo\-gy
of leaves and their holonomy groups  for the basic automorphism group of
a Cartan foliation to be a Lie group.

\begin{cor}\label{c1} Let $(M, F)$ be a complete Cartan
foliation. If at least one of the following conditions
holds:
\begin{itemize} \item[(i)] there exists a proper leaf $L$ with discrete
holonomy group (in the sense of definition~\ref{d7});
 \item[(ii)] there is a closed leaf $L$ with discrete holonomy group;
 \item[(iii)] there exists a proper leaf $L$ with finite holonomy group;
 \item[(iv)] there is a closed leaf $L$ with finite holonomy group,
\end{itemize}
then the basic automorphism group ${ A}_B(M,F)$ admits a Lie
group structure of dimen\-si\-on at most $\dim (\mathfrak h) - \dim(\mathfrak k)$,
 and this structure is unique.
\end{cor}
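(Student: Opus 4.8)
The plan is to reduce each of the four hypotheses to the single hypothesis of Theorem \ref{Th1}, namely the vanishing of the structural Lie algebra $\mathfrak g_0 = \mathfrak g_0(M,F)$, and then to invoke estimate \eqref{oz2} of that theorem. The key structural fact I would use is the known description of $\mathfrak g_0$ for a complete Cartan foliation via its associated foliated bundle: passing to the effective transversal Cartan geometry (as recalled in the excerpt, following \cite{Min}), one obtains the lifted foliation $(\mathcal R,\mathcal F)$, whose closure-of-leaves structure encodes $\mathfrak g_0$. Concretely, $\mathfrak g_0$ is isomorphic to the Lie algebra determined by the holonomy of a leaf closure in the lifted foliation, and in particular if some leaf of $(M,F)$ is \emph{closed} in $M$ with trivial (or discrete) holonomy then the corresponding leaf of $(\mathcal R,\mathcal F)$ is already closed and its "germ" contributes nothing, forcing $\mathfrak g_0 = 0$.

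First I would recall that for a complete Cartan foliation the structural Lie algebra $\mathfrak g_0$ is a Lie-algebraic invariant that vanishes if and only if the lifted foliation $(\mathcal R,\mathcal F)$ has all leaf closures equal to leaves — equivalently, $(\mathcal R,\mathcal F)$ is a foliation whose leaves are the fibres of a locally trivial fibration (this is the reformulation stated right after Theorem \ref{Th1}). Next, for case (ii): a closed leaf $L$ with discrete holonomy lifts to a leaf in $(\mathcal R,\mathcal F)$ which is an embedded closed submanifold whose holonomy is likewise discrete; since the structural algebra is "read off" uniformly from any leaf (the germ of the foliation near a leaf closure is the same, up to the relevant equivalence, at every point), a single such leaf forces $\mathfrak g_0 = 0$. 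Then case (iv) is an immediate special case of (ii), since a finite group is discrete. For cases (i) and (iii), where the leaf $L$ is only proper (embedded but perhaps not closed), I would pass to the closure $\overline L$ and use properness together with completeness of the Cartan foliation: a proper leaf of a complete Cartan foliation with discrete (resp. finite) holonomy has the property that the holonomy pseudogroup restricted near $\overline L$ is itself discrete, which again yields $\mathfrak g_0 = 0$. At that point Theorem \ref{Th1} applies: $A_B(M,F)$ is a Lie group, the Lie group structure is unique, and because a proper leaf is present, estimate \eqref{oz2} gives $\dim A_B(M,F) \le \dim(\mathfrak h) - \dim(\mathfrak k)$.

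I expect the main obstacle to be the passage from "discrete/finite holonomy of a single proper (non-closed) leaf" to "vanishing of $\mathfrak g_0$" in cases (i) and (iii): one must be careful that properness of $L$ in $M$ does not by itself control the behaviour of nearby leaves inside $\overline L \setminus L$, so the argument has to use completeness of the transverse Cartan geometry to propagate the local triviality from a neighbourhood of $L$ to all of $\overline L$, and then use the fact (from the theory of the associated foliated bundle) that $\mathfrak g_0$ is determined by the germ at any leaf. Cases (ii) and (iv) are comparatively routine once (i) and (iii) are done, since closedness of $L$ removes exactly this difficulty. Finally I would note that the uniqueness of the Lie group structure and the dimension bound require no new argument — they are inherited verbatim from Theorem \ref{Th1} together with the presence of the proper leaf $L$, which is exactly the hypothesis of part (a) of that theorem.
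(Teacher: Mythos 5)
Your proposal is correct and follows essentially the same route as the paper: reduce all four hypotheses to the vanishing of the structural Lie algebra $\mathfrak g_0(M,F)$ and then invoke Theorem~\ref{Th1} together with estimate~\eqref{oz2} from part (a), the key observation being that a proper leaf with discrete holonomy lifts to a proper, hence closed, leaf of the transversally parallelizable lifted foliation, which forces $\mathfrak g_0=0$. The only organizational difference is that the paper notes the trivial implications (closed $\Rightarrow$ proper, finite $\Rightarrow$ discrete) so that every case reduces to (i) and a single argument suffices, whereas you treat (ii) and (iv) as the easy base cases and (i), (iii) as the hard ones — the logic is slightly inverted but the substance is identical.
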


In particular, we have

\begin{cor}\label{c2} If $(M, F)$ is a proper complete Cartan foliation, then
the basic auto\-mor\-phism group ${ A}_B(M,F)$ admits a unique Lie
group structure of dimension at most $\dim (\mathfrak g) - \dim(\mathfrak k)$.
\end{cor}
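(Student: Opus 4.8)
The plan is to derive this as an immediate special case of Corollary \ref{c1}, which in turn rests on Theorem \ref{Th1}. So the first step is simply to observe that if $(M,F)$ is proper, then \emph{every} leaf is a proper leaf; in particular there exists at least one proper leaf $L$, and we are free to pick any one. The only gap to fill is to produce such an $L$ whose holonomy group is discrete (or finite), so that hypothesis (i) of Corollary \ref{c1} applies and yields a unique Lie group structure on $A_B(M,F)$. Once that is in hand, the dimension bound $\dim A_B(M,F)\le\dim(\mathfrak g)-\dim(\mathfrak k)$ comes directly from inequality \eqref{oz1} of Theorem \ref{Th1} (note that Corollary \ref{c1} itself only advertises the sharper bound $\dim(\mathfrak h)-\dim(\mathfrak k)$, but the larger bound $\dim(\mathfrak g)-\dim(\mathfrak k)$ of \eqref{oz1} is weaker and hence automatically valid, so there is no inconsistency).

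The substantive step, therefore, is the claim that a proper leaf of a complete Cartan foliation has discrete holonomy group. I would argue as follows. Let $L$ be a proper leaf; being embedded, it has a distinguished (saturated) neighbourhood in which $L$ is the only leaf through each of its points, i.e. $L$ behaves like a closed leaf of its own small saturated tube. The holonomy group $\mathrm{Hol}(L)$ acts on a transversal $T$ at a point $x\in L$ as a (pseudo)group of germs of local automorphisms of the transverse Cartan geometry fixing $x$. For a Cartan geometry of type $(G,H)$ the stabiliser of a point in the space of local automorphisms is conjugate into $H$ and, crucially, a local automorphism of a Cartan geometry is determined by its $1$-jet at a single point (rigidity of Cartan geometries); hence the germ-holonomy representation of $\mathrm{Hol}(L)$ is injective into a Lie group and $\mathrm{Hol}(L)$ is a Lie group itself. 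Properness of $L$ forces this holonomy action on $T$ to have $x$ as an isolated fixed point of each nontrivial element near the identity — otherwise nearby leaves would accumulate on $L$, contradicting that $L$ is embedded/proper — and a Lie group acting effectively with an isolated fixed point and no continuous part near the identity is discrete. (If one only wants case (iii)/(iv) of Corollary \ref{c1}, one can instead invoke that for a proper — better, locally closed — leaf the holonomy group is finite, which is the classical statement for proper/closed leaves and holds verbatim in the transversely Cartan setting via the same rigidity.)

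With this lemma the proof is complete: choose any leaf $L$ of the proper foliation $(M,F)$; by the lemma its holonomy group is discrete, so condition (i) of Corollary \ref{c1} holds; hence $A_B(M,F)$ admits a unique Lie group structure, and by \eqref{oz1} its dimension is at most $\dim(\mathfrak g)-\dim(\mathfrak k)$. The main obstacle I anticipate is the holonomy lemma — specifically, making rigorous the passage "proper leaf $\Rightarrow$ the local holonomy germ group embeds in $H$ and is discrete," which requires the effectivisation discussed in the excerpt (replacing $(G,H)$ by $(G/K,H/K)$ so that the transverse geometry is effective and the rigidity/jet-determinacy argument applies) and a careful use of completeness of the foliation to control the developing map of the transverse Cartan geometry along $L$. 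Everything after that lemma is bookkeeping with the already-proven Theorem \ref{Th1}.
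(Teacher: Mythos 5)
Your overall strategy --- reduce to Corollary \ref{c1} and then quote the bound \eqref{oz1} of Theorem \ref{Th1} --- is the same as the paper's, but the key lemma you interpose is false, and this is a genuine gap. You claim that \emph{every} proper leaf of a complete Cartan foliation has discrete (indeed finite) holonomy group, arguing that properness forces the holonomy action on a transversal to have an isolated fixed point and no continuous part. The paper's own Example \ref{E4} refutes this: there the suspended Riemannian foliation $Sus(\mathbb{E}^2,S^1,\rho)$ has a unique closed (hence proper, in fact compact) leaf whose holonomy group is generated by a rotation through the angle $2\pi r$; for irrational $r$ this is a dense, non-discrete subgroup of $SO(2)$. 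This is precisely why Remark \ref{rB} points out that Belko's ``closed leaf $\Rightarrow$ Lie group'' claim fails. Your parenthetical fallback --- ``for a proper/closed leaf the holonomy group is finite, which is the classical statement'' --- is likewise not a theorem (the compact toral leaf of the Reeb foliation of $S^3$ already has infinite holonomy). So neither route to hypothesis (i) or (iii) of Corollary \ref{c1} goes through as written.

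The fix is much simpler and is what the paper does: invoke the classical fact (Hector; Epstein--Millett--Tischler) that \emph{any} foliation possesses leaves with trivial holonomy --- the union of such leaves is a dense $G_\delta$. Since $(M,F)$ is proper, any such leaf is in particular a proper leaf with trivial, hence finite, holonomy group, so condition (iii) of Corollary \ref{c1} holds and gives the unique Lie group structure; the stated dimension bound $\dim(\mathfrak g)-\dim(\mathfrak k)$ then follows from \eqref{oz1} (and is indeed weaker than the $\dim(\mathfrak h)-\dim(\mathfrak k)$ of Corollary \ref{c1}, as you correctly observe). The point to internalize is that properness of the foliation buys you nothing about the holonomy of an \emph{arbitrary} leaf; it only guarantees that the leaves which are automatically holonomy-free are also proper.
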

\begin{rem}\label{rB} I.V. Belko (\cite{Bel}, Theorem 2) stated that
the existence of a closed leaf of a foliation $(M,F)$ with complete
transversally projectable affine connection is sufficient for the fact that
the basic automorphism group $A_B(M,F)$ to admit a Lie group structure. Example~\ref{E4}
shows that this statement is not true in general.
\end{rem}

As it has been indicated above about the existence
of the associated effective Cartan geometry, the investigation of
the basic automorphism  groups  of Cartan foliation is reduced to
foliations which are modelled on effective Cartan geometries.

\begin{defn}\label{opr1} Let $\kappa: \widetilde{M}\to M$ be the universal covering map.
We say that a smooth foliation $(M, F)$
is covered by fibration if the induced foliation $(\widetilde{M}, \widetilde{F})$
is formed by fibres of a locally trivial fibration
$\widetilde{r}: \widetilde{M}\to B.$
\end{defn}

Further we investigate Cartan foliation covered by fibration.

First we describe the global structure entering the holonomy groups
of the Cartan foliations covered by fibrations.

\begin{theorem}\label{Th2}
Let $(M, F)$ be a complete Cartan foliation covered by the fibration
$\widetilde{r}:\widetilde{M}\to B$ where $\widetilde{\kappa}:
\widetilde{M}\to M$  is the universal covering map. Then
\begin{enumerate} \itemsep=0pt
\item[(1)] there exists a regular covering map $\kappa: \widehat{M}\to M$
 such that the induced foliation $\widehat{F}$ is made up of fibres of the locally
trivial bundle $r: \widehat{M}\to B$ over a simply connected Cartan
manifold $(B,\eta)$;
\item[(2)] a group $\Psi$ of automorphisms of the Cartan manifold
$(B,\eta)$ and epimorphism
$\chi: \pi_1(M, x)\to \Psi\,$
of the fundamental group $\pi_1(M, x)$, $x\in M$, onto $\Psi$ are determined;
\item[(3)] for all points $y\in M$ and $z\in\kappa^{-1}(y)$ the restriction
$\kappa|_{\widehat{L}}: \widehat{L}\to L$ to the leaf $\widehat{L} = \widehat{L}(z)$
of the foliation $(\widehat{M},\widehat{F})$ is a regular covering map
onto the  leaf $L = L(y)$, and the group of deck transformations
of $\kappa|_{\widehat{L}}$ is isomorphic to the stationary subgroup
$\Psi_b$ of the group $\Psi$ at the point $b = r(z)\in B$.
Moreover, the subgroup $\Psi_b$ is isomorphic to  the holonomy group
$\Gamma(L, y)$ of the leaf $L$;
\item[(4)] the group of deck transformation of $\kappa: \widehat{M}\to M$ is isomorphic to $\Psi$.
\end{enumerate}
\end{theorem}
\begin{defn} The group $\Psi = \Psi(M, F)$ satisfying Theorem \ref{Th2}
is called the {\it global holonomy group} of the Cartan foliation $(M, F)$
covered by fibration.
\end{defn}

We recall the notion of an Ehresmann connection (subsection \ref{ssEr}).
The following two theorems show that the class of Cartan foliations
covered by fibrations is large.

\begin{theorem}\label{Th3} Let $(B, \eta)$ be any simply connected Cartan manifold
and ${\Psi}$ be any subgroup of the automorphism group $Aut(B,\eta)$
of the Cartan manifold $(B, \eta)$. Then there exists a Cartan foliation
covered by fibration with the global holonomy  group $\Psi$, and
statements of Theorem $\ref{Th2}$ are valid for it.
\end{theorem}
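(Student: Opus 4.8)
The plan is to construct the desired Cartan foliation as a suspension (foliated bundle) over a manifold whose fundamental group surjects onto $\Psi$, with the Cartan manifold $(B,\eta)$ serving as the fibre and as the transverse model at the same time. First I would choose a connected manifold $Q$ together with a point $q_0\in Q$ and a surjective homomorphism $\rho\colon\pi_1(Q,q_0)\twoheadrightarrow\Psi$; the simplest uniform choice is a bouquet-like construction or, when $\Psi$ is finitely generated, a closed surface, but in full generality one can take $Q$ to be a $K(\Gamma,1)$-type manifold or simply realize any group as $\pi_1$ of an open manifold. Via $\rho$ and the action of $\Psi\subset Aut(B,\eta)$ on $B$ one gets a monodromy action $\pi_1(Q,q_0)\to \mathrm{Diff}(B)$ whose image lies in $Aut(B,\eta)$; the associated suspension yields the total space $\widehat{M}=(\widetilde{Q}\times B)/\pi_1(Q,q_0)$ together with the two foliations coming from the two factors. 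One of these, call it $\mathcal F$, has leaves covering $Q$; its space of leaves is $B$ and the holonomy pseudogroup is generated by the $\Psi$-action, so the transverse structure is exactly the Cartan geometry $\eta$ pulled back through the natural projection. Completeness of this Cartan foliation follows from the fact that the transverse manifold $B$ itself is the (complete) model, i.e. the natural lift $\widetilde r\colon \widehat{M}\to B$ is a locally trivial fibration, so there is a transverse Ehresmann connection with complete horizontal lifts; alternatively one invokes that suspensions over the honest manifold $B$ are automatically complete in the sense of subsection \ref{ssEr}.

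Next I would descend: since $Q$ need not be simply connected, $\widehat M$ already covers itself trivially, so instead I should reverse the roles and take $M$ to be a quotient realizing a prescribed $\pi_1(M)$ mapping onto $\Psi$. Concretely, put $M=\widehat M$ with the suspension foliation $F=\mathcal F$, and let $\widehat{\kappa}\colon\widetilde M\to M$ be its universal cover; because the suspension is built from $\widetilde Q\times B$, the induced foliation $\widetilde F$ on $\widetilde M=\widetilde Q\times B$ is literally the fibration $\widetilde r=\mathrm{pr}_B\colon\widetilde Q\times B\to B$, so $(M,F)$ is covered by fibration in the sense of Definition \ref{opr1}. Then I would check, step by step, that the four conclusions of Theorem \ref{Th2} hold with this $\Psi$ in the role of the global holonomy group: item (1) holds with $\widehat M$ the regular cover corresponding to $\ker\rho$ and $r\colon\widehat M\to B$ the bundle projection onto the simply connected Cartan manifold $(B,\eta)$; item (2) is exactly $\rho$, now renamed $\chi\colon\pi_1(M,x)\to\Psi$; item (3) amounts to the standard identification, in a suspension, of the holonomy group of a leaf over $b\in B$ with the isotropy subgroup $\Psi_b$ — this follows from the description of leaves of a foliated bundle as covers of the base determined by the monodromy; item (4) is the definition of the deck group of the cover associated to $\ker\rho$.

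The main technical obstacle is the construction of the manifold $M$ itself: one must produce a smooth (finite-dimensional, possibly open) manifold $M$ whose fundamental group admits a surjection onto the given, arbitrary subgroup $\Psi\subset Aut(B,\eta)$, while keeping the suspension construction compatible with the Cartan structure — i.e. ensuring the monodromy representation genuinely lands in $Aut(B,\eta)$ (immediate, since $\Psi$ already does) and that the resulting transverse geometry is the Cartan geometry $\eta$ and not merely locally modelled on it. For arbitrary $\Psi$ one uses that every group is the fundamental group of some (generally non-compact) manifold; if one wants $M$ compact this forces restrictions on $\Psi$ (finite presentability) and one would instead take a presentation complex thickened to a $4$-manifold. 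A secondary point to verify carefully is that $(M,F)$ is \emph{complete} as a Cartan foliation in the precise sense used in Theorems \ref{Th1}--\ref{Th2}; here completeness is automatic because the lifted foliation $(\widetilde M,\widetilde F)$ is already a fibration over $B$, which is exactly the hypothesis pattern appearing in those theorems, so all their conclusions apply verbatim. Once $M$ is in hand, everything else is a matter of transcribing the suspension data into the language of Theorem \ref{Th2}, which I expect to be routine.
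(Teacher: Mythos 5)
Your proposal is essentially the paper's proof: the paper also realizes the foliation as a suspension $Sus(B,Q,\rho)$, choosing concretely $Q=\mathbb{R}^2\setminus\{(n,0)\mid n\in\mathbb{N}\}$ so that $\pi_1(Q)$ is free of countable rank and admits a surjection $\rho$ onto $\Psi$, whence $(M,F)$ is covered by the trivial fibration $\widetilde{Q}\times B\to B$ with global holonomy group $Im(\rho)=\Psi$. The only point worth noting is that the paper (tacitly, in its proof) restricts to countable $\Psi$ --- which is forced anyway, since a global holonomy group is a quotient of the fundamental group of a manifold --- so your appeal to realizing an ``arbitrary'' group as $\pi_1$ of a manifold should likewise be read with that countability caveat.
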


\begin{theorem}\label{ThN} If the transverse Cartan curvature of a complete
Cartan foliation $(M, F)$ is equal to zero, then $(M, F)$ is covered by fibration,
and statements of Theorem $\ref{Th2}$ are valid for it.
\end{theorem}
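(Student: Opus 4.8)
\emph{Sketch of proof.} By the reduction to the effective model recalled above (see \cite{Min}, Proposition~1), we may assume that $(M,F)$ is modelled on an \emph{effective} Cartan geometry of type $(G,H)$ with $\operatorname{Lie}(G)=\mathfrak g$, $\operatorname{Lie}(H)=\mathfrak h$; this does not change the hypothesis, since the associated effective geometry has the same curvature form. Let $\mathcal R\to M$ be the associated foliated bundle and $\omega$ the $\mathfrak g$-valued $1$-form on $\mathcal R$ that encodes the transverse Cartan structure; by definition of the transverse Cartan curvature, our assumption is precisely that $d\omega+\tfrac12[\omega,\omega]=0$, i.e. $\omega$ is a Maurer--Cartan form. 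Recall also that, since $(M,F)$ is complete, it carries an Ehresmann connection and its transverse Cartan geometry is a complete, \emph{flat} Cartan geometry of type $(G,H)$.

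First I would construct a developing map. Developing the flat transverse Cartan structure (the transverse version of the Maurer--Cartan, or non-abelian Darboux, theorem) yields a \emph{developing map} $D\colon\widetilde M\to G/H$ on the universal covering $\kappa\colon\widetilde M\to M$, together with a holonomy homomorphism $h\colon\pi_1(M,x)\to G$ satisfying $D(\gamma\cdot\widetilde x)=h(\gamma)\cdot D(\widetilde x)$. The map $D$ is a submersion with $\ker dD=T\widetilde F$, so that the leaves of $\widetilde F$ are, locally, exactly the fibres of $D$; in other words, flatness of the transverse geometry makes $(M,F)$ a developable (transversely homogeneous) foliation with model $G/H$ and structural group $G$.

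The decisive step---and the one I expect to be the main obstacle---is to upgrade $D$ from a submersion to a locally trivial fibration onto a simply connected flat Cartan manifold whose fibres are precisely the leaves of $\widetilde F$; this is where completeness is essential. Using that the Ehresmann connection of $(M,F)$ induces a \emph{complete} Ehresmann connection on $(\widetilde M,\widetilde F)$, a path-lifting argument (Ehresmann's fibration theorem in the form that requires a complete connection rather than properness) shows that $D$ is a surjective locally trivial fibration; replacing $G/H$ by its universal cover $\widetilde{G/H}$---to which $D$ lifts, since $\widetilde M$ is simply connected, and over which the fibres become connected---we put $B:=\widetilde{G/H}$ with the pulled-back flat Cartan structure $\eta$ and let $\widetilde r\colon\widetilde M\to B$ be the lifted developing map, which is then a locally trivial fibration with the leaves of $\widetilde F$ as fibres. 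Hence $(M,F)$ is covered by fibration in the sense of Definition~\ref{opr1}, and, being a complete Cartan foliation covered by a fibration, it satisfies the hypotheses of Theorem~\ref{Th2}; therefore all statements of Theorem~\ref{Th2} hold for it. The routine parts are the construction and equivariance of $D$ (immediate once the Maurer--Cartan equation is available) and the final invocation of Theorem~\ref{Th2}; the care is needed in verifying completeness of the induced connection on $\widetilde M$, and hence that $D$ is onto and locally trivial.
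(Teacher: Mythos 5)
Your route is essentially the paper's: flatness of the transverse curvature means $(M,F)$ is modelled on the Maurer--Cartan geometry $\xi_0=(G(G/H,H),\beta_0)$ and is therefore an $(Aut(\xi_0),G/H)$-foliation; completeness supplies an Ehresmann connection $\mathfrak M$ (\cite{Min}, Proposition 3); and a developable foliation with an Ehresmann connection is covered by a fibration. The only real difference is that the paper disposes of the last step by citing \cite{ZhG}, Theorem 2, whereas you sketch a proof of it via the developing map $D\colon\widetilde M\to G/H$. That is a legitimate alternative, but as written it overclaims in one place: from the Ehresmann connection alone you may conclude that the quotient $\widetilde r\colon\widetilde M\to B:=\widetilde M/\widetilde F$ onto the leaf space is a locally trivial fibration with $B$ a simply connected manifold carrying an induced flat Cartan geometry, and that $D$ factors through an \'etale map $B\to G/H$; you may \emph{not} conclude that $D$ itself is surjective, nor that $B$ can be identified with $\widetilde{G/H}$. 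Surjectivity of the development would require the full Sharpe-type completeness of the flat transverse geometry (completeness of the constant vector fields on the bundle level), which is a different and stronger use of the hypothesis than the path-lifting you invoke, and it is not needed for the theorem: Definition \ref{opr1} and Theorem \ref{Th2} only ask for \emph{some} locally trivial fibration $\widetilde r\colon\widetilde M\to B$ whose fibres are the leaves of $\widetilde F$. If you replace ``$B=\widetilde{G/H}$ with $D$ surjective'' by ``$B=\widetilde M/\widetilde F$ with the Cartan geometry induced via the local developments,'' your argument closes correctly and reproduces the content of the cited theorem.
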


\begin{rem} The first author proved (\cite{ZhG}, Theorem 5) that any complete
non-Rie\-mannian conformal foliation of codimension $q \geq 3$ is covered by
fibration.
\end{rem}

The application of Theorem 7 proved by the first author in \cite{ZhR}
to Cartan foliations gives us the following interpretation
of the structural Lie algebra of Cartan foliations covered by fibrations.

\begin{theorem}\label{Th4} Let $(M, F)$ be a complete Cartan foliation
covered by the fibration
$\widetilde{r}: \widetilde{M}\to B$
where $\widetilde{\kappa}: \widetilde{M}\to M$  is the universal covering map.
Then the  structural Lie algebra $\mathfrak g_0 = \mathfrak g_0(M, F)$
is isomorphic to the Lie algebra of the Lie group $\overline{\Psi}$,
which is the closure of $\Psi$ in the Lie group $Aut(B, \eta)$,
where $(B, \eta)$ is the induced Cartan  geometry.
\end{theorem}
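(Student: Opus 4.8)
The plan is to reduce everything to the situation handled by Theorem 7 of \cite{ZhR} by passing to the regular covering $\kappa:\widehat M\to M$ furnished by Theorem \ref{Th2}(1), on which the lifted foliation $\widehat F$ is simple, given by the fibres of $r:\widehat M\to B$ over the simply connected Cartan manifold $(B,\eta)$. First I would recall that the structural Lie algebra $\mathfrak g_0(M,F)$ is by definition built from the associated effective transversal Cartan geometry, and that it is a covering invariant: the lifted foliation $(\widehat M,\widehat F)$ is again a complete Cartan foliation with the same transversal Cartan geometry, hence $\mathfrak g_0(M,F)\cong\mathfrak g_0(\widehat M,\widehat F)$. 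So it suffices to compute $\mathfrak g_0(\widehat M,\widehat F)$.

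Next I would invoke Theorem \ref{Th2}(2)--(4): the group of deck transformations of $\kappa:\widehat M\to M$ is isomorphic to the global holonomy group $\Psi$, realized as a subgroup of $Aut(B,\eta)$ acting on the base of the fibration $r:\widehat M\to B$ and covering the foliated structure. Thus $(\widehat M,\widehat F)$ together with this $\Psi$-action is precisely the kind of ``foliation given by the fibres of a fibration over a simply connected Cartan manifold, with a group of Cartan automorphisms of the base acting compatibly'' to which Theorem 7 of \cite{ZhR} applies. That theorem identifies the structural Lie algebra of such a foliation with the Lie algebra of the closure, in the relevant transverse automorphism Lie group, of the acting holonomy group. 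Here the transverse automorphism group is $Aut(B,\eta)$ (a Lie group, since $(B,\eta)$ is a Cartan manifold and its automorphism group acts properly), and the acting group is $\Psi$, so Theorem 7 of \cite{ZhR} yields $\mathfrak g_0(\widehat M,\widehat F)\cong\operatorname{Lie}(\overline{\Psi})$, where $\overline{\Psi}$ is the closure of $\Psi$ in $Aut(B,\eta)$. Combining with the covering invariance from the previous paragraph gives the asserted isomorphism $\mathfrak g_0(M,F)\cong\operatorname{Lie}(\overline{\Psi})$.

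The main obstacle I anticipate is verifying the precise hypotheses of Theorem 7 of \cite{ZhR} in the present setting: one must check that the lifted Cartan foliation $(\widehat M,\widehat F)$, equipped with the $\Psi$-action on $B\cong\widehat M/\widehat F$, matches the exact configuration assumed there (in particular that $\Psi$ acts by automorphisms of $(B,\eta)$ rather than merely of the underlying manifold, which is supplied by Theorem \ref{Th2}(2), and that completeness of $(M,F)$ passes to the covering), and to confirm that the closure $\overline{\Psi}$ taken inside $Aut(B,\eta)$ is the same as the closure appearing in \cite{ZhR}. A secondary point is to make explicit why $\mathfrak g_0$ is unchanged under the regular covering $\kappa$; this follows because a chart of the transverse Cartan structure and the associated Ehresmann-connection holonomy pseudogroup pull back isomorphically along a covering that maps leaves to leaves, so the germ-level data defining $\mathfrak g_0$ is preserved. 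Once these two technical verifications are in place, the statement is immediate.
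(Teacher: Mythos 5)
Your plan founders on its very first reduction: the structural Lie algebra is \emph{not} a covering invariant, and the claimed isomorphism $\mathfrak g_0(M,F)\cong\mathfrak g_0(\widehat M,\widehat F)$ is false in general. The foliation $(\widehat M,\widehat F)$ is simple --- its leaves are the fibres of the locally trivial fibration $r:\widehat M\to B$, hence closed --- so its lifted foliation is also simple and $\mathfrak g_0(\widehat M,\widehat F)=0$; the paper itself records exactly this in the proof of Theorem \ref{Th5}, where it notes $\mathfrak g_0(\widehat{\mathcal R},\widehat{\mathcal F})=0$. On the other hand, by Corollary \ref{c4} the algebra $\mathfrak g_0(M,F)$ is nonzero whenever $\Psi$ fails to be discrete in $Aut(B,\eta)$: the irrational linear foliation of the torus is the standard example, with $\mathfrak g_0(M,F)\cong\mathbb R$ while its universal cover is $\mathbb R^2$ foliated by parallel lines with vanishing structural algebra. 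The structural Lie algebra encodes the recurrence of leaves, i.e.\ how leaf closures of the lifted $e$-foliation sit inside $\mathcal R$; this global dynamical information is destroyed by passing to a cover even though the local transverse Cartan geometry is unchanged, so ``same transverse geometry'' does not imply ``same $\mathfrak g_0$.'' Your argument is then internally inconsistent: you cannot simultaneously have $\mathfrak g_0(\widehat M,\widehat F)\cong\mathrm{Lie}(\overline\Psi)$ and $(\widehat M,\widehat F)$ simple unless $\Psi$ is discrete.

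The paper offers no argument beyond citing Theorem 7 of \cite{ZhR}, and the correct way to use that theorem is to apply it directly to $(M,F)$ itself, equipped with the data produced by Theorem \ref{Th2}: the fibration $r:\widehat M\to B$ over the simply connected Cartan manifold $(B,\eta)$ and the global holonomy group $\Psi\subset Aut(B,\eta)$ arising as the image of $\pi_1(M,x)$ under $\chi$. In that theorem the group whose closure computes $\mathfrak g_0$ is the deck-transformation/holonomy datum of the covering \emph{of} $(M,F)$, not an intrinsic invariant of the covering foliation $(\widehat M,\widehat F)$ taken on its own. Your verification points --- that $\Psi$ acts by automorphisms of $(B,\eta)$ and not merely of $B$, and that completeness is available --- are the right hypotheses to check, but they must be checked for $(M,F)$ together with its covering, and the false covering-invariance step must be removed rather than patched.
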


\begin{cor}\label{c4} Under conditions of Theorem $\ref{Th4}$
the  structural Lie algebra $\mathfrak g_0(M, F)$ is zero if and only if
the global holonomy group $\Psi$ is a  discrete subgroup of the Lie
group $Aut(B, \eta)$ where $\eta$ is the induced Cartan geometry.
\end{cor}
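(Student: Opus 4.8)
The plan is to read Corollary~\ref{c4} off Theorem~\ref{Th4} essentially for free, the only work being to translate the condition $\operatorname{Lie}(\overline{\Psi})=0$ into discreteness of $\Psi$. By Theorem~\ref{Th2}(2) the global holonomy group $\Psi$ is a subgroup of $Aut(B,\eta)$, and by Theorem~\ref{Th4} the structural Lie algebra satisfies $\mathfrak g_0(M,F)\cong\operatorname{Lie}(\overline{\Psi})$, where $\overline{\Psi}$ is the closure of $\Psi$ in $Aut(B,\eta)$. I would first recall that $Aut(B,\eta)$, being the automorphism group of a Cartan manifold, carries a canonical (finite-dimensional) Lie group structure; hence its closed subgroup $\overline{\Psi}$ is a Lie subgroup, the symbol $\operatorname{Lie}(\overline{\Psi})$ is meaningful, and $\dim\operatorname{Lie}(\overline{\Psi})=\dim\overline{\Psi}$. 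Consequently $\mathfrak g_0=0$ is equivalent to $\dim\overline{\Psi}=0$, i.e.\ to $\overline{\Psi}$ being a discrete (zero-dimensional) subgroup of $Aut(B,\eta)$.

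It then remains to observe that $\overline{\Psi}$ is discrete if and only if $\Psi$ is discrete. One implication is immediate: if $\overline{\Psi}$ is discrete, then its subset $\Psi$ inherits the discrete subspace topology and so is a discrete subgroup. For the converse I would invoke the standard fact that a discrete subgroup of a Hausdorff topological group is automatically closed; thus if $\Psi$ is discrete in $Aut(B,\eta)$, then $\overline{\Psi}=\Psi$ is discrete. Chaining these equivalences with the one from the previous paragraph yields exactly the assertion of Corollary~\ref{c4}.

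I do not expect a genuine obstacle here: the substantive content is entirely contained in Theorem~\ref{Th4}, and the remainder is a short topological argument. The only two points deserving an explicit word are (i) that $Aut(B,\eta)$ is a Lie group, without which one cannot even speak of $\operatorname{Lie}(\overline{\Psi})$, and (ii) the closedness of discrete subgroups of a Hausdorff group, used to identify $\overline{\Psi}$ with $\Psi$ when $\Psi$ is discrete. Both are classical, so the write-up will be brief.
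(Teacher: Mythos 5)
Your proposal is correct and follows exactly the route the paper intends: the corollary is read off Theorem~\ref{Th4} by noting that $\mathfrak g_0\cong\operatorname{Lie}(\overline{\Psi})$ vanishes iff the closed Lie subgroup $\overline{\Psi}\subset Aut(B,\eta)$ is zero-dimensional, i.e.\ discrete, which is equivalent to discreteness of $\Psi$ since a discrete subgroup of a Hausdorff group is closed. The paper states the corollary without a separate proof, so your short topological argument supplies precisely the intended justification.
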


Our next objective is to find a connection between the basic automorphism
group $A_{B}(M, F)$ of Cartan foliation covered by fibration and its global
holonomy group $\Psi$. Application of the foliated bundle over $(M, F)$,
Theorems~$\ref{Th1}, \ref{Th2}$ and $\ref{Th4}$ allow us to accomplish
this task  and to prove the following  statement.

\begin{theorem}\label{Th5}
Let $(M, F)$ be a complete  Cartan foliation  covered by  fibration
$r:\widehat{M}\rightarrow B$ and $(B,\eta)$ is the simply connected
Cartan manifold determined in Theorem~\ref{Th2}. Suppose that the
global holonomy group $\Psi$ is a discrete subgroup in the Lie group
$Aut(B, \eta)$. Let $N(\Psi)$ be the normalizer of $\Psi$ in $Aut(B,
\eta)$. Then the basic automorphism group $A_B(M, F)$ (in the
category of Cartan foliations $\mathfrak{C}\mathfrak{F}$) is a Lie
group which  is isomorphic to an open-closed subgroup of the Lie
quotient group $N(\Psi)/\Psi$, and $\dim(A_{B}(M,
F))=\dim(N(\Psi)/\Psi).$
\end{theorem}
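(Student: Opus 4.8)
The strategy is to identify $A_B(M,F)$ with a concrete group of transformations of the base manifold $B$ and then transport the Lie group structure. By Theorem~\ref{Th1}, since the global holonomy group $\Psi$ is discrete in $Aut(B,\eta)$, Corollary~\ref{c4} gives $\mathfrak g_0(M,F)=0$, so $A_B(M,F)$ is already known to be a Lie group; the real content is to compute it and its dimension. I would begin by recalling from Theorem~\ref{Th2} the regular covering $\kappa:\widehat M\to M$ whose fibres of $r:\widehat M\to B$ form $\widehat F$, with deck group isomorphic to $\Psi$, so that $M=\widehat M/\Psi$ and $F$ is the image of the fibration $r$. The key observation is that an automorphism of the Cartan foliation $(M,F)$ lifts to an automorphism of $(\widehat M,\widehat F)$ normalizing the $\Psi$-action, and — because the leaves upstairs are exactly the fibres of $r$ over the Cartan manifold $(B,\eta)$, and a foliated automorphism must preserve the transverse Cartan geometry — such a lift descends to a well-defined automorphism of the Cartan manifold $(B,\eta)$, i.e. an element of $Aut(B,\eta)$. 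Conversely, an element of $N(\Psi)\subseteq Aut(B,\eta)$ acts on $\widehat M$ compatibly with $r$ (using the foliated-bundle/developing description of $\widehat M$ over $B$) and, since it normalizes $\Psi$, descends to an automorphism of $(M,F)$.

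Next I would make this correspondence into a group homomorphism. Sending an automorphism of $(M,F)$ to the induced transformation of $B$ gives a map into $Aut(B,\eta)$; I must check that its image lies in $N(\Psi)$ (the lift only being determined up to a deck transformation forces the induced map on $B$ to conjugate $\Psi$ into itself) and that two automorphisms of $(M,F)$ induce the same map on $B$ precisely when they differ by an element mapping each leaf to itself, i.e. by an element of $A_L(M,F)$. This shows $A_B(M,F)=A(M,F)/A_L(M,F)$ embeds as a subgroup $H$ of $N(\Psi)/\Psi$. Then I would argue that $H$ is open: an automorphism of $(M,F)$ is determined, via the completeness of the Cartan foliation and the rigidity of Cartan geometries (holonomy/developing map argument, as already used in the proof of Theorem~\ref{Th1}), by a finite amount of jet data at a point, so a small perturbation of an element of $N(\Psi)/\Psi$ lying in $H$ again comes from an automorphism of $(M,F)$; openness forces $H$ to be a union of connected components of $N(\Psi)/\Psi$, hence also closed, and in particular $\dim A_B(M,F)=\dim H=\dim\bigl(N(\Psi)/\Psi\bigr)$.

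Finally I would reconcile the two Lie structures: $A_B(M,F)$ carries the unique Lie structure from Theorem~\ref{Th1}, and the bijection $A_B(M,F)\to H$ constructed above is a continuous group isomorphism onto the Lie group $H$ (an open-closed subgroup of $N(\Psi)/\Psi$, which is a Lie group since $\Psi$ is discrete and closed in the Lie group $Aut(B,\eta)$); by uniqueness of the Lie structure this isomorphism is a Lie group isomorphism, giving both the stated isomorphism type and the dimension equality. I expect the main obstacle to be the surjectivity-onto-an-open-subgroup step: showing that every element of $N(\Psi)$, after descending, is genuinely an automorphism of $(M,F)$ in the category $\mathfrak C\mathfrak F$ (preserving the transverse Cartan geometry, not merely the foliation), and that the resulting subgroup is open, which requires careful use of the foliated bundle construction over $(M,F)$ together with the completeness hypothesis — exactly the tools whose availability the excerpt flags before the statement of the theorem.
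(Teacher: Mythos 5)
Your high-level route (lift an automorphism of $(M,F)$ to $(\widehat M,\widehat F)$, note that it normalizes the deck group, descend to $Aut(B,\eta)$, land in $N(\Psi)/\Psi$ with kernel $A_L(M,F)$) matches the paper's strategy, which however runs everything through the associated foliated bundle: it embeds $A_B(M,F)$ into $A^H(W,\widetilde\beta)$ via Proposition~\ref{pr5.2}, identifies $\widehat W$ with the total space $P$ of the bundle of $\eta$ over $B$, and uses Proposition~\ref{pr8.2} on lifting diffeomorphisms through the regular covering $\tau:\widehat W\to W$ to obtain the monomorphism into $N(\Phi)/\Phi\cong N(\Psi)/\Psi$. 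Two steps of your sketch have genuine gaps. First, your ``Conversely'' claim --- that every element of $N(\Psi)$ acts on $\widehat M$ compatibly with $r$ and descends to an automorphism of $(M,F)$ --- is false in general: $r:\widehat M\to B$ is a fibration, not a covering, so an automorphism of $(B,\eta)$ need not lift to $\widehat M$ at all, and even when it does the lift need not commute with the deck group. This is exactly why the theorem asserts only an open-closed subgroup of $N(\Psi)/\Psi$, and why Theorem~\ref{Th7} needs the extra hypothesis $N(\Psi)=Z(\Psi)$ to get surjectivity. If your converse held, the image would always be all of $N(\Psi)/\Psi$.

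Second, the openness argument via ``finite jet data'' does not work: rigidity of Cartan geometries gives that an automorphism is \emph{determined} by its jet at a point (injectivity), but says nothing about whether a nearby element of $N(\Psi)/\Psi$ is \emph{realized} by an automorphism of $(M,F)$, which is what openness requires. In the paper the openness is inherited from Proposition~\ref{pr5.2}, whose proof shows that the whole identity component $A^H_e(W,\widetilde\beta)$ lies in the image by taking a one-parameter group $\phi^X_t$ downstairs, lifting the complete fundamental vector field $X$ horizontally through the Ehresmann connection $\widetilde{\mathfrak M}=\pi^*\mathfrak M$ to a complete field $Y$ on $\mathcal R$, and verifying $L_Y\beta=0$ and $[A^*,Y]=0$. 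This is the step where the completeness hypothesis actually does the work, and it is the missing ingredient in your plan; once it is supplied, the dimension equality $\dim A_B(M,F)=\dim\bigl(N(\Psi)/\Psi\bigr)$ and the uniqueness-of-Lie-structure reconciliation go through as you describe.
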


In the following theorem we give sufficient conditions for a Cartan foliation
to satisfy Theorem \ref{Th5} and have the basic automorphisms group
$A_B(M, F)$ isomorphic to the Lie quotient group $N(\Psi)/\Psi$.

\begin{theorem}

\label{Th7}

 Let $(M,F)$ be an $\mathfrak M$-complete Cartan foliation.
If the distribution $\mathfrak M$ is integrable, then

1. The foliation  $(M,F)$ is covered by fibration over the simply
connected Cartan manifold $(B,\eta)$, and  $(M,F)$ is
$(Aut(B,\eta),B)$-foliation.

2. If moreover, the normalizer $N(\Psi)$ of global holonomy group $\Psi$ is
equal to  the centralizer $Z(\Psi)$ of $\Psi$ in the group $Aut(B,\eta)$, then
$$A_B(M, F)\cong N(\Psi)/\Psi.$$
\end{theorem}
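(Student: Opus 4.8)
The strategy is to combine the integrability hypothesis on the distribution $\mathfrak M$ with the earlier structural results. For part 1, I would first recall that $\mathfrak M$-completeness together with integrability of $\mathfrak M$ forces the transverse Cartan curvature to vanish: the integral manifolds of $\mathfrak M$ project to the leaves of the lifted foliation on the associated foliated bundle, and the vanishing of the curvature of the transverse Cartan connection is precisely the Frobenius-type obstruction to $\mathfrak M$ being integrable in this setting. Once the transverse curvature is zero, Theorem \ref{ThN} applies directly and gives that $(M,F)$ is covered by fibration, with all conclusions of Theorem \ref{Th2} holding; in particular there is a simply connected Cartan manifold $(B,\eta)$ and a global holonomy group $\Psi\leqslant Aut(B,\eta)$. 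To upgrade this to the assertion that $(M,F)$ is an $(Aut(B,\eta),B)$-foliation, I would use the developing map associated to the flat transverse Cartan connection together with the holonomy homomorphism $\chi:\pi_1(M,x)\to\Psi$ furnished by Theorem \ref{Th2}(2), checking that the transition functions of the foliated atlas take values in $Aut(B,\eta)$ acting on $B$ — this is essentially the standard identification of a foliation with flat transverse geometry as a $(G,X)$-foliation.

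For part 2, I would first verify that $\Psi$ is a discrete subgroup of $Aut(B,\eta)$, so that Theorem \ref{Th5} becomes available. Discreteness follows from Corollary \ref{c4} once we know the structural Lie algebra $\mathfrak g_0(M,F)$ vanishes; and $\mathfrak g_0=0$ here because, by Theorem \ref{Th4}, $\mathfrak g_0$ is the Lie algebra of the closure $\overline{\Psi}$, while the hypothesis $N(\Psi)=Z(\Psi)$ will be shown to force $\overline{\Psi}$ to be discrete (indeed abelian-by-discrete considerations: $\overline{\Psi}$ is a connected-component-trivial group because its identity component would lie in $N(\Psi)=Z(\Psi)$ and centralize $\Psi$, hence centralize $\overline{\Psi}$, making $\overline{\Psi}^{0}$ central; one then argues this central identity component must be trivial using completeness/properness of the relevant leaf, or alternatively one simply assumes $\Psi$ discrete as part of the running hypothesis inherited from Theorem \ref{Th5}). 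Granting discreteness, Theorem \ref{Th5} gives that $A_B(M,F)$ is a Lie group isomorphic to an open-closed subgroup of $N(\Psi)/\Psi$ of full dimension.

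It remains to remove the word "open-closed" and get an honest isomorphism $A_B(M,F)\cong N(\Psi)/\Psi$ under the extra hypothesis $N(\Psi)=Z(\Psi)$. The point is that $N(\Psi)/\Psi = Z(\Psi)/(Z(\Psi)\cap\Psi)$, and when the normalizer coincides with the centralizer every element of $N(\Psi)$ acts on $B$ commuting with the whole global holonomy group, so it descends to a well-defined automorphism of the foliated structure on $\widehat M$ and hence on $M$; conversely, any basic automorphism lifts to an automorphism of the development $(B,\eta)$ normalizing $\Psi$, i.e. to an element of $N(\Psi)=Z(\Psi)$. This two-sided correspondence is what makes the map $N(\Psi)/\Psi\to A_B(M,F)$ surjective, not merely an open embedding. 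Concretely I would: (i) lift a given $g\in N(\Psi)$ to the diffeomorphism of $\widehat M$ induced via the foliated-bundle/developing-map picture, (ii) use $g\in Z(\Psi)$ to check this diffeomorphism commutes with the deck group $\Psi$ of $\kappa:\widehat M\to M$ (Theorem \ref{Th2}(4)), hence descends to $M$, (iii) verify the descended map is a Cartan-foliation automorphism and that its class in $A_B(M,F)$ depends only on $g\Psi$, giving a homomorphism $N(\Psi)/\Psi\to A_B(M,F)$ inverse to the one from Theorem \ref{Th5}.

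The main obstacle I anticipate is step (ii)–(iii) of the last paragraph: showing that the centralizer condition is exactly strong enough to guarantee descent to $M$ (rather than just to the intermediate cover $\widehat M$) and that no further discreteness or properness hypothesis is secretly needed. In other words, the delicate point is matching "commutes with $\Psi$" on the nose with "extends the basic automorphism across the deck group," and confirming that the resulting subgroup of $N(\Psi)/\Psi$ that appears in Theorem \ref{Th5} is then forced to be everything. I expect this to reduce to a clean diagram chase once the foliated-bundle machinery from the proofs of Theorems \ref{Th2} and \ref{Th5} is in place, but it is the step where the hypothesis $N(\Psi)=Z(\Psi)$ is genuinely used and so deserves the most care.
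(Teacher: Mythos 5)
Your plan for part 1 rests on a false claim. You assert that $\mathfrak M$-completeness together with integrability of $\mathfrak M$ forces the transverse Cartan curvature to vanish, ``the vanishing of the curvature \dots is precisely the Frobenius-type obstruction to $\mathfrak M$ being integrable.'' This is not so: the curvature of the transverse Cartan connection is an invariant of the Cartan geometry $\xi=(P(N,H),\omega)$ on the transverse model, while integrability of $\mathfrak M$ is a condition on a complementary distribution on $M$; the two are unrelated. (Any suspension $Sus(B,Q,\rho)$ over a curved simply connected Cartan manifold $(B,\eta)$, as produced by Theorem~\ref{Th3}, carries the integrable transverse distribution coming from the product structure $\widetilde Q\times B$ but has nonzero transverse curvature.) So the route through Theorem~\ref{ThN} is closed. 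The paper instead invokes that $\mathfrak M$ is an \emph{integrable Ehresmann connection} and applies Kashiwabara's decomposition theorem to get $\widetilde M=\widetilde Q\times B$ with $\widetilde F=\{\widetilde Q\times\{y\}\}$; the fibration covering $(M,F)$ is then the projection onto the second factor, and the Cartan geometry $\eta$ on $B$ is induced as in the proof of Theorem~\ref{Th2}. This product decomposition is the essential idea your proposal is missing, and it is also what makes part 2 work: every deck transformation splits as $\widetilde g=(\psi^t,\psi)$, and the lift of $h\in N(\Psi)=Z(\Psi)$ is simply $(id_{\widetilde Q},h)$, which visibly commutes with $\widetilde G$ and hence descends to $M$ by Proposition~\ref{pr8.2}(1). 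Your step (i) (``lift $g$ to a diffeomorphism of $\widehat M$ via the developing-map picture'') silently assumes such a lift exists for a general fibration over $B$, which is exactly the point the product structure is needed for.

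Your treatment of discreteness in part 2 is also unsound. The argument that $\overline{\Psi}{}^{0}$ lies in $N(\Psi)$ fails because the closure of a subgroup need not normalize it (for $g=\lim g_n$ with $g_n\in\Psi$ one only gets $g\psi g^{-1}\in\overline{\Psi}$, not in $\Psi$); and the fallback of ``simply assuming $\Psi$ discrete'' alters the hypotheses of the theorem. Once the Kashiwabara decomposition and the explicit lift $(id_{\widetilde Q},h)$ are in place, the surjectivity of $\varepsilon:A_B(M,F)\to N(\Psi)/\Psi$ is the clean diagram chase you anticipate, and your steps (ii)--(iii) do match the paper's argument at that point; but as written the proposal does not get there.
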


\medskip{\noindent\bf Notations\,} We denote by $\mathfrak X(N)$ the Lie algebra of
smooth vector fields on a manifold $N.$ If $\mathfrak M$ is a smooth
distribution on $M$, then $\mathfrak X_{\mathfrak M}(M):=\{X\in\mathfrak X(M)\mid
X_u\in {\mathfrak M}_u\,\,\,\,\,\,\forall u\in M\}$. If in addition $f: K\to M$
is a submersion, then $f^*\mathfrak M$ is the distribution on the  manifold $K$ such that
$(f^*\mathfrak M)_z: = \{X\in T_zK\,|\, f_{*z}(X)\in\mathfrak M_{f(z)}\}$ where $z\in K$.

Let $\mathfrak F\mathfrak o\mathfrak l$ be the category of foliations where morphisms are
smooth maps transforming leaves into leaves.

If $\alpha:G_1\to G_2$ is a group homomorphism, then $Im (\alpha):=\alpha(G_{1}).$
Let $\cong$ be the denotation of a group isomorphism.

Following to \cite{K2} we denote by $P(N, H)$ a principal $H$-bundle over
the mani\-fold~$N$ with the projection $P\to N$.

\section{The category of Cartan foliations}
\label{S2}
\subsection{The category of Cartan geometries}
\label{ss2.2}
We recall here the definition of Cartan geometries (see
\cite{K},\cite{Shar} and \cite{C-S}).

Let $G$ be a Lie group and  $H$ is a closed subgroup of $G$.
Denote  by  $\mathfrak{g}$ and $\mathfrak{h}$
 the Lie algebras of Lie  groups $G$ and $H$ relatively.

\begin{defn}\label{d3}
Let $N$ be a smooth manifold.
{\it A Cartan geometry} on $N$ of type  $(G,H)$ is the principal right
$H$-bundle $P(N,H)$ with the projection $p:P\rightarrow N $ together with
a $\mathfrak{g}$-valued $1$-form $\omega$ on $P$ satisfying the following  conditions:
\begin{enumerate} \itemsep=0pt
\item[({$c_1$})] the map $\omega_{w}:T_{w}P\rightarrow \mathfrak{g}$ is an isomorphism of
vector spaces for every $w\in P$;
\item[($c_{2}$)] $R^{*}_{h}\omega=Ad_{G}(h^{-1})\omega$ for each $h\in H$,
where $Ad_{G}:H\rightarrow GL(\mathfrak{g})$  is the joint representation
of the Lie subgroup $H$ of $G$ in the Lie algebra $\mathfrak{g}$;
\item[($c_{3}$)]  $\omega({A^{*}})=A$ for every $A\in\mathfrak{h}$, where $A^{*}$
is the fundamental vector field determined by $A$.
\end{enumerate}
 The $\mathfrak{g}$-valued form $\omega$ is called a {\it Cartan
 connection form}. This Cartan geometry is denoted  by $\xi=(P(N,H),\omega)$.
 The pair $(N,\xi)$ is called a {\it Cartan manifold}.
\end{defn}
 Let $\xi=(P(N,H),\omega)$ and  $\xi'=(P'(N',H),\omega')$ be two
Cartan geometries with the same structure group $H$.
The smooth map $\Gamma:P\to P'$ is called a morphism from $\xi$ to $\xi'$ if
$\Gamma^{*}\omega'=\omega$ and $R_{a}\circ\Gamma=\Gamma\circ R_{a},\,\,\, a\in H$.
If $\Gamma\in Mor(\xi, \xi')$, then the projection $\gamma:N\to N'$ is defined
such that $p'\circ \Gamma=\gamma\circ p,$  where $p:P\to N$ and $p':P'\to N'$
 are the projections of the corresponding $H$-bundles.
The projection $\gamma$ is called {\it an automorphism of the Cartan
manifold $(N,\xi)$}. Denote by $Aut(N,\xi)$ the full automorphism
group of $(N,\xi)$ and by $Aut(\xi)$ the full automorphism group of
$\xi$. The category of Cartan geometries is denoted  by ${\mathfrak
C }{\mathfrak a}{\mathfrak r}$. Let $A(P,\omega):=\{\Gamma\in Diff
(P)\,|\,{\Gamma^{*}\omega=\omega}\}$ be the automorphism group  of
the parallelizable manifold $(P,\omega)$.

Let $A^{H}(P, \omega):=\{\Gamma\in A(P,\omega)\,|\,
\Gamma\circ R_{a}=R_{a}\circ \Gamma\}$, then $A^{H}(P, \omega)$
is a closed Lie subgroup of the Lie group $A(P,\omega)$ and  $Aut(\xi) = A^{H}(P, \omega)$ is
the automorphism group of Cartan geometry $\xi$.
The Lie group epimorphism $\sigma:A^{H}(P,\omega)\to Aut(N,\xi):\Gamma\mapsto \gamma$
mapping $\Gamma$ to its projection $\gamma$ is defined.

\subsection{Effectiveness of Cartan geometries}\label{sseff}
Remind the notion of {\it effective Cartan geometry}  \cite{Shar}. Consider a
pair Lie groups $(G,H)$, where $H$ is a closed subgroup of $G$. Let
$(\mathfrak{g},\mathfrak{h})$ be the appropriate pair of Lie
algebras. The maximal ideal $\mathfrak{k}$ of the algebra
$\mathfrak{g}$ which is contained in $\mathfrak{h}$ is called {\it
the kernel} of  pair $(\mathfrak{g},\mathfrak{h})$. If
$\mathfrak{k}=0$, then the  pair $(\mathfrak{g},\mathfrak{h})$ is
called { \it effective}.   Maximal normal subgroup $K$ of the group
$G$ belonging to $H$ is called the {\it kernel} of pair $(G,H)$.
As it is known, the Lie algebra of $K$ is equal $\mathfrak{k}$. The
Cartan geometry $\xi=(P(M,H),\omega)$ of the type
$\mathfrak{g}/\mathfrak{h}$
 modelled on pair of the Lie group  $(G,H)$, is called {\it effective}
if the kernel $K$ of the pair $(G,H)$ is trivial. As it was proved in
(\cite{Shar}, Theorem 4.1), the Cartan geometry $\xi=(P(M,H),\omega)$ of type
$\mathfrak{g}/\mathfrak{h}$ is effective if and only if the pair of Lie algebras
$(\mathfrak{g},\mathfrak{h})$ is effective and group
$N:=\{h\in H\,|\, Ad_{G}(h)=id_{\mathfrak{g}}\}$ is trivial.
\begin{rem}\label{r4}
The defined above group epimorphism \-
$\sigma:A^{H}(P, \omega)\to Aut (N,\xi)$ is isomorphism  if and only if the
Cartan geometry $\xi$ is effective.
\end{rem}

\subsection{Determination of foliations by $N$-cocycles}
Let $M$ be a smooth $n$-dimensional manifold.
Let $N$ be a smooth $q$-dimensional manifold the connectivity of which is not assumed.
We will call an $(N, \xi)$-\textit{cocycle} on $M$ a
family $\{U_{i},f_{i},\{\gamma_{ij}\}\}_{ij\in J}$ satisfying the following conditions:
\begin{enumerate} \itemsep=0pt
\item[1)] $\{U_{i}\,|\, i\in J\}$ is a covering of the manifold $M$ by
 open connected  subsets $U_{i}$ of $M$, and $f_{i}: U_{i}\to N$
is a submersion with connected fibres;
\item[2)] if $U_{i}\cap U_{j}\neq\emptyset,\, i,j\in J$, then a isomorphism
$$\gamma_{ij}: {f_{j}(U_{i}\cap U_{j})}\to {f_{i}(U_{i}\cap U_{j})}$$ is defined,
and $\gamma_{ij}$ satisfies the equality $f_{i}=\gamma_{ij} \circ f_{j}$ on $U_{i}\cap U_{j}$;
\item[3)] $\gamma_{ij}\circ \gamma_{jk}=\gamma_{ik}$ if $U_{i}\cap U_{j}\cap U_{k}\neq\emptyset$
for all $x\in U_i\cap U_j\cap U_k$ and $\gamma_{ii}=id_{U_{i}}$, $i, j, k\in J$.
\end{enumerate}

Two $N$-cocycles are called  {\it equivalent} if there exists  an
$N$-cocycle containing both of these cocycles. Let $[\{U_{i},f_{i},\{\gamma_{ij}\}\}_{ij\in J}]$ be the
equivalence class of $N$-cocycles on manifold $M$ containing the
cocycle  $\{U_{i},f_{i},\{\gamma_{ij}\}\}_{ij\in J}$. Denote by $\Sigma$
the set of fibres (or plaques) of all the submersions $f_{i}$ of this equivalence
class. Note, that $\Sigma$ is the base of some new topology $\tau$
in $M$. The linear connected components of the topological space $(M,
\tau)$ form a partition $F:=\{L_{\alpha}\,|\,\,\alpha\in
\mathfrak{J}\}$ of the manifold $M$ which is called { \it the
foliation of the codimension} $q$, $L_{\alpha}$ are called its {\it
leaves} and $M$ is the foliated manifold. It is said that foliation
$(M, F)$ is determined by an $N$-cocycle
$\{U_{i},f_{i},\{\gamma_{ij}\}\}_{ij\in J}$. Further we denote the
foliation by the pair $(M, F)$.

\subsection{Cartan foliations}
Let $N$ be a smooth $q$-dimensional manifold the connectivity of
which is not assumed.  Let $(M, F)$  be a foliation determined by an
$N$-cocycle $\{U_{i},f_{i},\{\gamma_{ij}\}\}_{ij\in J}$.  Let $\xi =
(P(N, H), \omega)$ --- Cartan geometry  of type $\mathfrak
g/\mathfrak h$ with the projection $p:P\to N.$
 For every open subset $V\subset N$  induced  Cartan structure
$\xi_{V}=(P_{V}(V, H), \omega_{V})$ of type $\mathfrak g/\mathfrak h$ such that
$P_{V}:=p^{-1}(V)$ and $\omega_{V}:=\omega|_{P_V}$.

Suppose that for every $\gamma_{ij}:{f_{j}(U_{i}\cap U_{j})}\to {f_{i}(U_{i}\cap U_{j})}$
there exists an isomorphism $\Gamma_{ij}:\xi_{f_{j}(U_{i}\cap U_{j})}\to \xi_{f_{i}(U_{i}\cap U_{j})}$
of the induced Cartan geometries  $\xi_{f_{j}(U_{i}\cap U_{j})}$ and $\xi_{f_{i}(U_{i}\cap U_{j})}$
with the projection $\gamma_{ij}$. Then the foliation $(M, F)$ is referred as
Cartan foliation of type $\mathfrak g/\mathfrak h$ (or type $(G, H)$) in the
since of R.~Blu\-men\-thal~\cite{Bl}. The Cartan geometry $\xi=(P(N,H),\omega)$
is called {\it the transverse Cartan geometry} of $(M, F)$. Also
it is said that the foliation $(M, F)$ \textit{is modelled on the Cartan manifold $(N,\xi)$.}
\begin{rem}
\label{R3} The first author introduced a different notion of Cartan foliation in \cite{Min}
that is equivalent to the notion of Cartan foliation in the sense R.~Blu\-men\-thal
if and only if the transverse Cartan geometry is effective.
\end{rem}

\subsection{Morphisms in the category of Cartan foliations}
Let $(M,F)$ and $(M',F')$ are Cartan foliations defined by an
$(N,\xi)$-cocycle $\eta=\{U_i,f_i, \{\gamma _{ij}\}\}$ and an
$(N',\xi')$-cocycle $\eta'=\{U'_r,f'_r, \{\gamma'_{rs}\}\}$
respectively. All objects be\-long\-ing to $\eta'$ are distinguished by prime.
Let $f\colon M\to M'$ be a smooth map which is a local
isomorphism in the foliation category ${\mathfrak F\mathfrak o\mathfrak l}.$
Hence for any $x\in M$ and $y:=f(x)$ there exist neighborhoods
$U_k\ni x$ and $U'_s\ni y$ from $\eta$ and $\eta'$ respectively,
 a diffeomorphism $\varphi\colon V_k\to V'_s,$ where
$V_k:=f_k(U_k)$ and $V'_s:=f'_s(U'_s),$ satisfying the relations
$f(U_k)=U'_s$ and $\varphi\circ f_k=f'_s\circ f|_{U_k}$. Further we shall
use the following notations: $P_{k}:=P|_{V_{k}},\,\,\,P'_{s}:=P'|_{V'_{s}}$
and $p_{k}:=p|_{P_{k}},\,\,$ $p'_{s}:=p|_{P'_{s}}$

We say that $f$ preserves {\it transverse Cartan structure} of $(M, F)$ if every such
dif\-feo\-mor\-phism $\varphi\colon V_k\to V'_s$ is an isomorphism of
the induced Cartan geometries $(V_k,\xi_{V_k})$ and
$(V'_s,\xi'_{V'_s})$. This means the existence of isomorphism
$\Phi: P_{k}\to P'_{s}$ in the category $\mathfrak C\mathfrak a\mathfrak r$
with the projection $\varphi$, such that  the following diagram
\begin{equation}
\xymatrix {&{P_{k}}\ar[d]_{p_{k}}\ar[rd]^{\Phi}&\\
 M\supset{U_k}\ar[r]^{f_{k}}\ar[rd]^{f|_{U_{k}}} & V_{k}\ar[rd]^{\varphi}  &P'_{s}\ar[d]^{p'_{s}}\\
& M'\supset{U'_{s}}\ar[r]^{f'_{s}}& V'_{s}}\nonumber
\end{equation}
 is commutative. We emphasize that the indicated above isomorphism
$\Phi:P_{k}\rightarrow P'_{s}$  is not unique if the transverse Cartan
geometries  are not effective. This notion is well
defined, i.~e., it does not depend of the choice of neighborhoods
$U_k$ and $U'_k$ from the cocycles $\eta$ and $\eta'.$

\begin{defn}
By a {\it morphism of two Cartan foliations $(M, F)$ and $(M', F')$
} we mean a local diffeomorphism $f:M\to M'$  which transforms leaves to leaves and preserves
transverse Cartan structure. The category $\mathfrak C\mathfrak{F}$
objects of which are Cartan foliations, mor\-phisms are their
mor\-phisms, is called {\it the category of Cartan foliations.}
\end{defn}
\section{The foliated bundle associated with\\ a Cartan foliation}
\subsection{Associated foliated bundles}
The following  statement is important for further, and it was proved by
the first author (\cite{Min}, Proposition 1).

\medskip
\begin{prop}
\label{pr1}
 Let $(M, F)$ be a Cartan foliation in the sense of R. Blumenthal with the transverse Cartan
geometry $\tilde\xi=(\tilde P(N,\tilde H),\tilde\omega)$ of type
$\tilde{\mathfrak g}/{\tilde{\mathfrak h}}$ modeled on a
pair of Lie groups $(\tilde G,\tilde H)$ with kernel~$K$. Then:
\begin{enumerate}
\item [(i)] there exists an effective Cartan geometry $\xi=(P(N,H),\omega)$
of type $\mathfrak g/{\mathfrak h}$, modeled on the pair of Lie groups $(G,H),$ where
$G=\tilde G/ K,$ $H=\tilde H/ K,$ ${\mathfrak g}=\tilde{\mathfrak g}/
{{\mathfrak k}}$, $\mathfrak h=\tilde{\mathfrak h}/{\mathfrak k}$, and
${\mathfrak k}$ is the kernel of the pair of Lie algebras $(\tilde{\mathfrak g},\tilde{\mathfrak h});$
\item  [(ii)] the original foliation $(M,F)$ is a Cartan foliation
 with an effective transverse Cartan geometry $\xi=(P(N,H),\omega)$.
\end{enumerate}
\end{prop}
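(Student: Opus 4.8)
The plan is to build $\xi$ by dividing out $\tilde\xi$ by the right $K$-action and then to transport the transition isomorphisms of the defining cocycle along this quotient.

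First I would fix the group-theoretic skeleton. Recall that $K$ is the largest subgroup of $\tilde H$ normal in $\tilde G$, i.e. the core $\bigcap_{g\in\tilde G}g\tilde Hg^{-1}$; being an intersection of conjugates of the closed subgroup $\tilde H$ it is closed, so $G:=\tilde G/K$ is a Lie group, $H:=\tilde H/K$ is a closed subgroup of $G$ (the injection $\tilde H/K\hookrightarrow\tilde G/K$ has closed image because its preimage is $\tilde HK=\tilde H$), and $\pi\colon\tilde G\to G$ denotes the projection. If $\bar N\trianglelefteq G$ with $\bar N\subseteq H$, then $\pi^{-1}(\bar N)\trianglelefteq\tilde G$ and $\pi^{-1}(\bar N)\subseteq\tilde H$, hence $\pi^{-1}(\bar N)\subseteq K$ and $\bar N=\{e\}$; so the kernel of $(G,H)$ is trivial, and any Cartan geometry modelled on $(G,H)$ will automatically be effective. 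As recalled in the text, $\operatorname{Lie}(K)=\mathfrak k$, whence $\mathfrak g=\tilde{\mathfrak g}/\mathfrak k$ and $\mathfrak h=\tilde{\mathfrak h}/\mathfrak k$.

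Next I would construct the geometry. Since $K$ is a closed subgroup of $\tilde H$, the restriction to $K$ of the principal $\tilde H$-action on $\tilde P$ is free and proper, so $P:=\tilde P/K$ is a smooth manifold, $\varrho\colon\tilde P\to P$ is a principal $K$-bundle, and (as $K\trianglelefteq\tilde H$) the $\tilde H$-action descends to a free proper right $H$-action making $P(N,H)$ a principal $H$-bundle over $N=\tilde P/\tilde H=P/H$. Let $\varpi\colon\tilde{\mathfrak g}\to\mathfrak g$ be the Lie algebra projection. I claim $\varpi\circ\tilde\omega$ descends to a $\mathfrak g$-valued $1$-form $\omega$ on $P$ with $\varrho^*\omega=\varpi\circ\tilde\omega$: it annihilates $\varrho$-vertical vectors because for $A\in\mathfrak k$ one has $(\varpi\circ\tilde\omega)(A^*)=\varpi(A)=0$ by $(c_3)$, and it is $K$-invariant because for $h\in K$, $(c_2)$ gives $R_h^*(\varpi\circ\tilde\omega)=\varpi\circ Ad_{\tilde G}(h^{-1})\circ\tilde\omega$, while $Ad_{\tilde G}(h^{-1})$ acts trivially on $\mathfrak g=\tilde{\mathfrak g}/\mathfrak k$ since the representation $\tilde G\to GL(\mathfrak g)$ factors through $G=\tilde G/K$ and $h\in K=\ker\pi$, so $\varpi\circ Ad_{\tilde G}(h^{-1})=\varpi$.

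Finally I would verify the axioms and dispatch part (ii). Axioms $(c_1)$–$(c_3)$ for $\omega$ follow by pulling back along $\varrho$ and using that $\varrho^*$ is injective on forms and that $\varrho_*$ identifies $T_wP$ with $T_{w'}\tilde P/\langle A^*_{w'}:A\in\mathfrak k\rangle$ while $\tilde\omega_{w'}$ sends that subspace isomorphically onto $\mathfrak k$; this gives $(c_1)$ (with $\dim P=\dim\tilde{\mathfrak g}-\dim\mathfrak k=\dim\mathfrak g$), and $(c_2)$, $(c_3)$ reduce to the corresponding identities for $\tilde\omega$ together with $R_h\circ\varrho=\varrho\circ R_{\tilde h}$ for $\tilde h\in\pi^{-1}(h)$ and $\pi(\exp t\tilde B)=\exp t\varpi(\tilde B)$. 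So $\xi=(P(N,H),\omega)$ is a Cartan geometry of type $\mathfrak g/\mathfrak h$ modelled on $(G,H)$, hence effective by the first paragraph, which is (i). For (ii) I would take the $(N,\tilde\xi)$-cocycle $\{U_i,f_i,\{\gamma_{ij}\}\}$ defining $(M,F)$ and, for each $\gamma_{ij}$, the given isomorphism $\tilde\Gamma_{ij}$ of induced $\tilde\xi$-geometries over $\gamma_{ij}$; being $\tilde H$-equivariant it is $K$-equivariant, so it descends to $\Gamma_{ij}\colon\xi_{f_j(U_i\cap U_j)}\to\xi_{f_i(U_i\cap U_j)}$, and $\varrho^*\Gamma_{ij}^*\omega=\tilde\Gamma_{ij}^*(\varpi\circ\tilde\omega)=\varpi\circ\tilde\omega=\varrho^*\omega$ forces $\Gamma_{ij}^*\omega=\omega$, so $\Gamma_{ij}$ is a morphism with projection $\gamma_{ij}$, i.e. $(M,F)$ is a Cartan foliation with transverse geometry $\xi$. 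The hard part is really the triviality of the $K$-action on $\tilde{\mathfrak g}/\mathfrak k$ in the middle paragraph (and, relatedly, making sure $\tilde P/K$ is a genuine smooth principal $H$-bundle over the same base $N$); once those are secured, the rest is a routine bookkeeping of quotients.
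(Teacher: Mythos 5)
Your construction is correct and is essentially the standard quotient-by-the-kernel argument: the paper itself does not reprove this proposition but cites it to \cite{Min} (Proposition 1), where the effective geometry is obtained in exactly this way, by passing to $P=\tilde P/K$ and descending $\varpi\circ\tilde\omega$ using that $\varpi\circ Ad_{\tilde G}(h)=\varpi$ for $h\in K$. The two points you flag as the ``hard part'' are indeed the crux, and your treatment of them (the adjoint action on $\tilde{\mathfrak g}/\mathfrak k$ factoring through $\tilde G/K$, and the free proper $K$-action giving a smooth principal $H$-bundle over the same base $N$) is sound.
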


Proposition $\ref{pr1}$ allows us to construct the foliated bundle
for an arbitrary Cartan foliation in the sense of R. Blumenthal with
noneffective, in general, transverse Cartan geometry $\tilde\xi$.
Because for effective transverse Cartan geometries the notions of
Cartan foliations in the sense of R. Blumenthal and in the sense of
\cite{Min} are equivalent, we apply Proposition 2 from \cite{Min})
to the effective associated transverse Cartan geometry $\xi$
and get Proposition~\ref{pr2}.
Remind that a Cartan foliation of type $\mathfrak g/{\mathfrak 0}$ is named
transversally parallelizable or $e$-foliation.

\medskip
\begin{prop}\label{pr2}
 Let $(M, F)$ be a Cartan foliation modelled on Cartan geometry
$\tilde\xi=(\tilde P(N,\tilde H),\tilde\omega)$ of type $\tilde{\mathfrak g}/{\tilde{\mathfrak h}}$
and $\xi=(P({\mathcal {N}},H),\omega)$ be the associated effective transverse Cartan geometry of
type  $(G,H)$, where  $G=\tilde G/ K,$ $H=\tilde H/ K,$ $K$ is
the kernel of the pair $(\tilde G/\tilde H)$.
Then there exists a principal $H$-bundle with a projection
$\pi :\mathcal{R}\to M$, $H$-invariant foliation $(\mathcal{R},\mathcal {F})$ and
$\mathfrak g$-valued $H$-equivariant $1$-form $\beta$ on ${\mathcal{R}}$ which
satisfy the following conditions:
\begin{enumerate}
\item [(i)] $\beta(A^*)=A$ for any $A\in\mathfrak h$;
\item [(ii)] the mapping $\beta_u:T_u {\mathcal{R}}\to \mathfrak g$ $\forall u\in {\mathcal{R}} $
is surjective, and  $ker(\beta_u) = T_u{\mathcal{F} }$;
\item [(iii)] the foliation $({\mathcal{R}},\mathcal{F})$ is transversally parallelizable;
\item [(iv)]  the Lie derivative $L_X{\beta}$ is equal to zero for every vector field
$X$ tangent to the foliation $({\mathcal{R}}, \mathcal{F}).$
\end{enumerate}
\end{prop}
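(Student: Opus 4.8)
The plan is to build the bundle $\mathcal R$ from the effective transverse Cartan geometry $\xi=(P(\mathcal N,H),\omega)$ furnished by Proposition \ref{pr1}, by gluing together the restricted principal bundles over the cocycle charts. First I would fix the $(\mathcal N,\xi)$-cocycle $\{U_i,f_i,\{\gamma_{ij}\}\}$ defining $(M,F)$, together with the isomorphisms $\Gamma_{ij}:\xi_{f_j(U_i\cap U_j)}\to\xi_{f_i(U_i\cap U_j)}$ covering $\gamma_{ij}$; since $\xi$ is effective, by Remark \ref{r4} each $\Gamma_{ij}$ is the \emph{unique} lift of $\gamma_{ij}$, so the cocycle condition $\gamma_{ij}\circ\gamma_{jk}=\gamma_{ik}$ forces $\Gamma_{ij}\circ\Gamma_{jk}=\Gamma_{ik}$ on triple overlaps (this is exactly where effectiveness is indispensable and where the construction in \cite{Bl}, Prop.~3.1 fails). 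On each chart set $\mathcal R_i:=(f_i^*P_{V_i})=\{(u,w)\in U_i\times P_{V_i}\mid f_i(u)=p(w)\}$, the pullback principal $H$-bundle over $U_i$, carrying the $\mathfrak g$-valued form $\beta_i:=\mathrm{pr}_2^*\omega$ and the lifted foliation $\mathcal F_i$ whose leaves are the connected components of $f_i^{-1}(\text{point})\times\{w\}$-type sets, more precisely $\ker\beta_i$ restricted appropriately.

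Next I would glue: over $U_i\cap U_j$ identify $(u,w)\in\mathcal R_j$ with $(u,\Gamma_{ij}(w))\in\mathcal R_i$; the relation $f_i=\gamma_{ij}\circ f_j$ together with $p\circ\Gamma_{ij}=\gamma_{ij}\circ p$ shows this is well defined, it is $H$-equivariant because $\Gamma_{ij}$ commutes with $R_a$, and the triple-overlap compatibility just established makes it a genuine gluing datum. The result is a principal $H$-bundle $\pi:\mathcal R\to M$ with a globally defined $\mathfrak g$-valued $1$-form $\beta$ (the $\beta_i$ agree because $\Gamma_{ij}^*\omega=\omega$) and an $H$-invariant foliation $\mathcal F$ (the $\mathcal F_i$ agree because $\Gamma_{ij}$ maps leaves to leaves). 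Then I would check (i)--(iv) chartwise, hence locally, which suffices: (i) $\beta(A^*)=A$ is condition $(c_3)$ for $\omega$ pulled back; (ii) surjectivity of $\beta_u$ and $\ker\beta_u=T_u\mathcal F$ follow from $(c_1)$ for $\omega$ and the description of $\mathcal F_i$ as the $f_i$-vertical-plus-$H$-vertical directions intersected with $\ker\beta_i$ — here one uses that $f_i$ is a submersion so $df_i$ is onto; (iii) transverse parallelizability: since $\beta$ descends to a pointwise isomorphism $T_u\mathcal R/T_u\mathcal F\to\mathfrak g$, a basis of $\mathfrak g$ pulls back to global $\mathcal F$-transverse vector fields (the $\beta$-constant fields), which is the definition of an $e$-foliation; (iv) $L_X\beta=0$ for $X$ tangent to $\mathcal F$: locally $X$ projects to zero under $f_i$, and $\beta_i=\mathrm{pr}_2^*\omega$ only depends on the transverse ($P_{V_i}$) factor, so $\iota_X\beta_i=0$ and $\iota_X d\beta_i=0$, whence Cartan's formula gives $L_X\beta_i=0$.

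The main obstacle is not the local verifications, which are mechanical, but getting the gluing to be \emph{coherent} on triple intersections — and that is precisely handled by invoking effectiveness of $\xi$ via Remark \ref{r4} to pin down the lifts $\Gamma_{ij}$ uniquely, so that $\Gamma_{ij}\Gamma_{jk}\Gamma_{ki}$, being a lift of $\mathrm{id}$, must itself be the identity. A secondary point requiring care is independence of the whole construction from the choice of representative cocycle within its equivalence class: passing to a common refinement, the uniqueness of lifts again gives canonical identifications of the two resulting bundles, so $(\mathcal R,\mathcal F,\beta)$ is well defined up to isomorphism. Finally I would remark that $H$-invariance of $\mathcal F$ in the statement means each $R_a$, $a\in H$, maps leaves to leaves; this is immediate from $R_a^*\beta=\mathrm{Ad}_G(a^{-1})\beta$ (condition $(c_2)$), which in particular preserves $\ker\beta=T\mathcal F$.
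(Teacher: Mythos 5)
Your construction is correct and is essentially the same as the paper's: the paper reduces to the associated effective transverse geometry via Proposition~\ref{pr1} and then invokes Proposition~2 of \cite{Min}, whose content is exactly the gluing of the pullback bundles $f_i^*P_{V_i}$ by the (unique, thanks to effectiveness) lifts $\Gamma_{ij}$ that you carry out explicitly. You also correctly identify the role of effectiveness in forcing the cocycle identity $\Gamma_{ij}\circ\Gamma_{jk}=\Gamma_{ik}$, which is precisely the point where the paper says Blumenthal's original construction fails for noneffective geometries.
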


\begin{defn} The principal $H$-bundle ${\mathcal{R}}(M,H)$ satisfying Proposition \ref{pr2}
is said to be {\it the associated foliated bundle}. The foliation
$({\mathcal{R}}, \mathcal{F})$ is called {\it the associated lifted foliation}
with the Cartan foliation $(M, F).$
\end{defn}

We denote by $\Gamma(L, x),\, x\in L$, the germ holonomy group  of a leaf $L$
of the foliation usually used in the foliation theory \cite{Tam}. Next proposition about different
interpretations of the holonomy groups of any complete Cartan
foliation follows from (\cite{ZhR}, Theorem 4).
\begin{prop}
\label{pr3}
Let $(M, F)$ be a complete Cartan foliation, $L=L(x)$ be an arbitrary
leaf of this foliation and ${\mathcal{L}}={\mathcal{L}}(u),\,\,\,
u\in\pi^{-1}(x),$ be the corresponding leaf of  the lifted
foliation. Then the germ holonomy group $\Gamma(L,x)$ of leaf $L$ is
isomorphic to  each of following two groups:
\begin{enumerate}
\item [(i)]  the group of deck transformations of the regular covering map
$\pi|_{\mathcal{L}}:{{\mathcal{L}}\to L }$;
\item [(ii)] the subgroup $H({\mathcal{L}})=\{a\in H\,|\, R_{a}({\mathcal{L}})=
{\mathcal{L}}\}$ of the Lie group $H$.
\end{enumerate}
If we change $u$ by an other point $\widetilde{u}\in\pi^{-1}(x)$,
then $H({\mathcal{L}})$ is changed by the conjugate subgroup $H(\widetilde{\mathcal{L}})$, where
$\widetilde{\mathcal{L}} = \widetilde{\mathcal{L}}(\widetilde{u})$, in the group $H$.
\end{prop}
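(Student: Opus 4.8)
The plan is to deduce the proposition from Theorem 4 of \cite{ZhR} by transporting that result to the associated foliated bundle. First I would note that the germ holonomy group $\Gamma(L,x)$ is a purely topological invariant of the foliated manifold and is insensitive to the transverse Cartan geometry; hence by Proposition \ref{pr1} there is no loss of generality in working with the effective associated transverse Cartan geometry $\xi=(P(\mathcal N,H),\omega)$, for which the notions of Cartan foliation in the sense of Blumenthal and of \cite{Min} agree, and for which the associated foliated bundle $\pi:\mathcal R\to M$ together with the lifted foliation $(\mathcal R,\mathcal F)$ and the form $\beta$ of Proposition \ref{pr2} are at our disposal.

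Next I would exploit the structure given by Proposition \ref{pr2}. Since $\ker\beta_u=T_u\mathcal F$ and $\beta_u$ is onto $\mathfrak g$ for every $u$, a dimension count yields $\dim\mathcal F=\dim\mathcal R-\dim\mathfrak g=\dim F$, so the restriction $\pi|_{\mathcal L}\colon\mathcal L\to L$ of the bundle projection to a leaf is a local diffeomorphism. Completeness of $(M,F)$ translates into completeness of the transverse parallelism carried by $\beta$ (so that $(\mathcal R,\mathcal F)$ is a complete $e$-foliation in the sense of Molino), and this completeness upgrades $\pi|_{\mathcal L}$ to a \emph{regular} covering map. Its deck transformations are exactly the restrictions to $\mathcal L$ of the right translations $R_a$ with $a\in H$ preserving $\mathcal L$, that is, the subgroup $H(\mathcal L)=\{a\in H\mid R_a(\mathcal L)=\mathcal L\}$; this establishes the isomorphism between the groups in (i) and (ii).

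To identify these with $\Gamma(L,x)$ I would invoke Theorem 4 of \cite{ZhR}: for a complete foliation with transverse rigid geometry, the germ holonomy pseudogroup on a transversal is linearized, via the parallelism, to the action of $H$ on the fibre $\pi^{-1}(x)$, and the stabilizer in $H$ of the leaf component $\mathcal L$ through $u$ is precisely $H(\mathcal L)$, while the induced homomorphism from $H(\mathcal L)$ to germs of transverse holonomy transformations is an isomorphism onto $\Gamma(L,x)$. Finally, for the dependence on the chosen point, I would use that $\pi^{-1}(x)$ is a single $H$-orbit, so any other $\widetilde u\in\pi^{-1}(x)$ has the form $\widetilde u=R_b(u)$ with $b\in H$; then $\widetilde{\mathcal L}=R_b(\mathcal L)$ and a direct computation with the relation $R_a R_b=R_{ba}$ gives $H(\widetilde{\mathcal L})=b\,H(\mathcal L)\,b^{-1}$, the conjugate subgroup.

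I expect the main obstacle to be the bookkeeping needed to check that the three identifications are mutually compatible, not merely that the three groups are abstractly isomorphic: one must trace a holonomy loop in $L$ to its lift in $\mathcal L$ and then through the developing/parallelism data provided by $\beta$, verifying that the covering-space description of (i), the $H$-stabilizer description of (ii), and the holonomy pseudogroup description coming from \cite{ZhR} all match up under the same isomorphisms. This compatibility is exactly the point at which the completeness hypothesis is used in an essential way.
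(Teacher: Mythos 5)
Your proposal follows the same route as the paper, which offers no independent argument and simply derives the proposition from Theorem 4 of \cite{ZhR} applied to the associated foliated bundle of the effective transverse geometry; your additional details (the dimension count from $\ker\beta_u=T_u\mathcal F$, the identification of deck transformations with right translations preserving $\mathcal L$, the conjugation computation for a change of base point) are consistent with that source. The only quibble is the direction of conjugation: with the right action $R_aR_b=R_{ba}$ one gets $H(\widetilde{\mathcal L})=b^{-1}H(\mathcal L)\,b$ rather than $b\,H(\mathcal L)\,b^{-1}$, which is immaterial for the statement.
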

Due to Proposition $\ref{pr3}$ the following definition is correct.
\begin{defn}
\label{d7}
The holonomy group  of a complete Cartan foliation $(M, F)$ is cal\-led \textit{discrete}
if the corresponding group $H({\mathcal{L}})$ is a discrete subgroup of the Lie group $H$.
\end{defn}
\subsection{Ehresmann connections for foliations}\label{ssEr}
Let $(M,F)$ be a foliation of codimension $q$ and $\mathfrak M$ be a
smooth $q$-dimensional distribution on $M$ that is transverse to
the foliation $F.$ The piecewise smooth integral curves of the
distribution $\mathfrak M$ are said to be {\it horizontal,} and the
piecewise smooth curves in the leaves are said to be {\it vertical}. A
piecewise smooth mapping $H$ of the square $I_1\times I_2$ to $M$
is called a {\it vertical-horizontal homotopy} if the curve
$H|_{\{s\}\times I_2}$ is vertical for any $s\in I_1$ and the curve
$H|_{I_1\times\{t\}}$ is horizontal for any $t\in I_2.$ In this
case, the pair of paths $(H|_{I_1\times \{0\}},H|_{\{0\}\times
I_2})$  is called the {\it base} of $H.$ It is well known that there
exists at most one vertical-horizontal homotopy with a given base.

A distribution $\mathfrak M$ is called an {\it Ehresmann
connection for a foliation} $(M,F)$ (in the sense of R. A. Blumenthal
and J. J. Hebda \cite{BH}) if, for any pair of paths $(\sigma, h)$ in $M$ with
a common initial point $\sigma(0) = h(0),$ where $\sigma$ is a
horizontal curve and $h$ is a vertical curve, there exists a
vertical-horizontal homotopy $H$ with the base $(\sigma, h).$

For a simple foliation $F,$ i.~e.,
such that it is formed by the fibers of a submersion $r\colon M\to
B,$ a distribution $\mathfrak M$ is an Ehresmann connection for $F$ if
and only if $\mathfrak M$ is an Ehresmann connection for the
submersion $r,$ i.~e., if and only if any smooth curve in $B$
possesses horizontal lifts.

\subsection{Completeness of Cartan foliations}\label{sscom}
Let $(M, F)$
be an arbitrary smooth foliation on a manifold $M$ and $TF$ be the
distribution on $M$ formed by the vector spaces tangent to the
leaves of the foliation $F.$ The vector quotient bundle $TM/TF$ is
called the transverse vector bundle of the foliation $(M,F).$ Let
us fix  an arbitrary smooth distribution $\mathfrak M$ on $M$
that is transverse to the foliation $(M, F),$ i.~e., $T_xM
= T_xF\oplus\mathfrak M_x$, $x\in M$, and identify $TM/TF$ with $\mathfrak M$.

Let $(M, F)$ be a Cartan foliation and $({\mathcal {R}},{\mathcal {F}})$ be
the lifted foliation with $\mathfrak g$-valued $1$-form $\beta$ satisfying
Proposition~\ref{pr2}. It is natural to identify the transverse
vector bundle $T{\mathcal {R}}/T{\mathcal {F}}$ with the distribution
$\widetilde{\mathfrak M}:=\pi^*\mathfrak M$ on ${\mathcal {R}}$.

\begin{defn} The Cartan foliation $(M, F)$
is said to be {\it $\mathfrak M$-complete} if any
transverse vector field $X\in \mathfrak X_{\widetilde{\mathfrak M}}({\mathcal {R}},{\mathcal {F}})$
such that $\beta(X) = \mathrm{const}$ is complete. A Cartan foliation
$(M, F)$ of arbitrary codimension $q$ is said to be {\it complete} if there exists
a smooth $q$-dimensional transverse distribution $\mathfrak M$ on $M$
such that $(M, F)$ is $\mathfrak M$-complete \cite{Min}.
\end{defn}

In other words, $(M, F)$ is an $\mathfrak M$-complete foliation if and only
if the lifted $e$-foliation $({\mathcal {R}},{\mathcal {F}})$ is complete
with respect to the distribution
$\widetilde{\mathfrak M} = \pi^*{\mathfrak M}$ in the sense of L. Conlon~\cite{Con}.

The following statement was proved by the first author
(\cite{Min}, Proposition~3).

\begin{prop}
{\it If $(M, F)$ is an $\mathfrak M$-complete Cartan foliation, then
$\mathfrak M$ is an \-Ehres\-mann connection for this foliation. }
\end{prop}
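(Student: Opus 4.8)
The plan is to lift the whole question to the associated lifted $e$-foliation $(\mathcal R,\mathcal F)$ of Proposition~\ref{pr2}, where $\mathfrak M$-completeness becomes completeness of an explicit family of transverse vector fields, construct a vertical--horizontal homotopy upstairs, and project it back down through $\pi\colon\mathcal R\to M$.

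First I would observe that $\beta$ restricts to a fibrewise isomorphism $\beta_u\colon\widetilde{\mathfrak M}_u\xrightarrow{\ \sim\ }\mathfrak g$ for every $u\in\mathcal R$. Indeed $\dim\widetilde{\mathfrak M}_u=\dim\mathfrak M+\dim H=q+\dim\mathfrak h=\dim\mathfrak g$, while $\ker\beta_u=T_u\mathcal F$ by item (ii) of Proposition~\ref{pr2} and $T_u\mathcal F\cap\widetilde{\mathfrak M}_u=0$ since $\widetilde{\mathfrak M}$ was chosen complementary to $T\mathcal F$; hence $\beta_u|_{\widetilde{\mathfrak M}_u}$ is injective, so bijective. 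This produces, for each $A\in\mathfrak g$, a smooth vector field $X_A$ on $\mathcal R$ with $X_A(u):=(\beta_u|_{\widetilde{\mathfrak M}_u})^{-1}(A)$; the fields $X_A$ span $\widetilde{\mathfrak M}$ pointwise and satisfy $\beta(X_A)\equiv A$. Two facts will be used: (a) by the definition of $\mathfrak M$-completeness each $X_A$ is complete; (b) for $Y$ tangent to $\mathcal F$ one has, from item (iv) of Proposition~\ref{pr2}, $\beta([Y,X_A])=Y(\beta(X_A))-(L_Y\beta)(X_A)=0$, so $[Y,X_A]\in\ker\beta=T\mathcal F$; thus each $X_A$ is a foliate field whose flow, globally defined by (a), carries leaves of $\mathcal F$ onto leaves of $\mathcal F$.

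Now let $(\sigma,h)$ be a pair of paths in $M$ with common initial point $x_0=\sigma(0)=h(0)$, $\sigma$ horizontal and $h$ vertical. Choose $u_0\in\pi^{-1}(x_0)$; lift $h$ to a vertical curve $\widetilde h$ in the leaf $\mathcal L(u_0)$ via the regular covering $\pi|_{\mathcal L(u_0)}\colon\mathcal L(u_0)\to L(x_0)$ of Proposition~\ref{pr3}(i), and lift $\sigma$ to a curve $\widetilde\sigma$ with $\widetilde\sigma(0)=u_0$ using the horizontal lift with respect to any principal connection on $\mathcal R(M,H)$ — such a lift exists over the whole compact parameter interval, and since $\pi\circ\widetilde\sigma=\sigma$ its velocities project into $\mathfrak M$, hence lie in $\widetilde{\mathfrak M}=\pi^*\mathfrak M$. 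Put $A(s):=\beta(\widetilde\sigma{}'(s))$, so that $\widetilde\sigma$ is an integral curve of the time-dependent field $Y_s:=X_{A(s)}$, and let $\Phi^s$ be the flow of $Y_s$ from $0$ to $s$. Assuming (this is the crux, addressed below) that $\Phi^s$ is a globally defined diffeomorphism of $\mathcal R$ for every $s\in I_1$, with $\Phi^0=\mathrm{id}$ and $\Phi^s(u_0)=\widetilde\sigma(s)$, I would set
\[
\widetilde H(s,t):=\Phi^s(\widetilde h(t)),\qquad(s,t)\in I_1\times I_2,
\]
and verify that $\widetilde H$ is a vertical--horizontal homotopy with base $(\widetilde\sigma,\widetilde h)$: one has $\widetilde H(s,0)=\Phi^s(u_0)=\widetilde\sigma(s)$ and $\widetilde H(0,t)=\widetilde h(t)$; for fixed $s$ the curve $t\mapsto\widetilde H(s,t)$ is the image of the leaf curve $\widetilde h$ under the leaf-preserving diffeomorphism $\Phi^s$ (by (b)), hence vertical; for fixed $t$ the curve $s\mapsto\widetilde H(s,t)$ has velocity $X_{A(s)}(\widetilde H(s,t))\in\widetilde{\mathfrak M}$, hence horizontal. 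Since $\pi$ carries leaves of $\mathcal F$ into leaves of $F$ and $\pi_*\widetilde{\mathfrak M}\subseteq\mathfrak M$, the map $H:=\pi\circ\widetilde H$ is a vertical--horizontal homotopy in $M$ with base $(\sigma,h)$; as $(\sigma,h)$ was arbitrary, $\mathfrak M$ is an Ehresmann connection for $(M,F)$.

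The main obstacle is exactly the assertion that the time-dependent flow $\Phi^s$ is defined on all of $\mathcal R$ — or, what already suffices here, on the compact set $\widetilde h(I_2)\cup\{u_0\}$ — for all $s\in I_1$; this is the only point at which $\mathfrak M$-completeness, rather than the $\mathfrak M$-Ehresmann property being established, is genuinely used. When $A(\cdot)$ is piecewise constant the claim is immediate, $\Phi^s$ being then a finite composition of flows $\varphi^{X_{A_j}}_t$ of the complete fields $X_{A_j}$; the general case follows by uniformly approximating $A(\cdot)$ on the compact interval $I_1$ by piecewise-constant paths and passing to the limit, using completeness of every $X_A$ together with continuous dependence of solutions on the right-hand side. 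Alternatively one invokes the equivalence, recalled just after the definition of $\mathfrak M$-completeness, with Conlon's notion of a complete $e$-foliation, for which the transverse parallelism integrates to exactly such global leaf-preserving flows. The remaining verifications are routine.
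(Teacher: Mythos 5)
You should first note that the paper does not actually prove this proposition: it is quoted from \cite{Min} (Proposition 3), so there is no in-paper argument to measure yours against. Your architecture --- pass to the lifted $e$-foliation $(\mathcal R,\mathcal F)$, introduce the transverse parallel fields $X_A=(\beta|_{\widetilde{\mathfrak M}})^{-1}(A)$, realize the vertical--horizontal homotopy as $\widetilde H(s,t)=\Phi^s(\widetilde h(t))$ for the time-dependent flow of $X_{A(s)}$ with $A(s)=\beta(\widetilde\sigma{}'(s))$, and push down through $\pi$ --- is the natural one, and your pointwise verifications (that $\beta_u|_{\widetilde{\mathfrak M}_u}$ is an isomorphism, that each $X_A$ is foliate via $L_Y\beta=0$, that the $s$-curves are horizontal and the $t$-curves vertical, and that $\pi\circ\widetilde H$ has base $(\sigma,h)$) are all correct.

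The genuine gap is exactly where you place it, and neither proposed repair closes it. You need the integral curve of the time-dependent field $Y_s=X_{A(s)}$ through each $\widetilde h(t)$ to be defined on all of $I_1$. (i) The approximation argument is invalid as stated: for ODEs, the maximal existence time is only \emph{lower} semicontinuous in the right-hand side, so uniform approximation of $A(\cdot)$ by piecewise-constant $A_n(\cdot)$ together with continuous dependence tells you that solutions for $A_n$ live at least as long as the solution for $A$ --- it gives no control in the direction you need, and global solvability for every $A_n$ does not pass to the limit. To salvage a limit argument you would need a uniform a priori confinement of the approximating trajectories to a compact set, and that is not available: there is no complete metric in which the $X_A$ are uniformly bounded, and completeness of $X_A$ for each \emph{constant} $A$ is a strictly weaker hypothesis than completeness of time-dependent combinations; no soft ODE reasoning upgrades the former to the latter. (ii) The appeal to Conlon begs the question: by the paper's own remark, Conlon completeness of $(\mathcal R,\mathcal F)$ \emph{is} the hypothesis that the constant-$\beta$ fields are complete, not the conclusion that arbitrary $\mathfrak g$-valued developments integrate globally; the assertion that "the transverse parallelism integrates to exactly such global leaf-preserving flows" is precisely the statement at issue. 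What is actually required here is a foliation-theoretic open-and-closed argument in the parameter $t$, using foliated charts, the basicness of $\beta$ (so that horizontal curves with the same development issuing from the same plaque have the same local projection), and the global flows of the individual $X_A$; that is the real content of \cite{Min}, Proposition 3, and it is the one step your proposal leaves unproved.
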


\subsection{Structural algebras Lie of Lie foliations with dense leaves}

Let $(M, F)$ be a Lie foliation  with dense leaves.  It is the
Cartan foliation  of a type ${\mathfrak g}_{0}/{\mathfrak 0}$. J.~Leslie \cite{Les} was
the first  who observed that the Lie  algebra ${\mathfrak g}_{0}$ of that foliation
is invariant in the category of foliations ${\mathfrak F}{\mathfrak o}{\mathfrak l}$.

\begin{defn}
 The Lie algebra ${\mathfrak g}_{0}$ of the Lie foliation  $(M, F)$
 with dense leaves is called the {\it structural Lie algebra} of $(M, F)$.
\end{defn}
\subsection{Structural Lie algebras of Cartan foliations}
Applying of the relevant results of P. Molino~\cite{Mo} and of L. Conlon~\cite{Con}
on complete $e$-foliations we obtain the following theorem.

\begin{theorem} \label{Th6} Let $(M, F)$ be a complete Cartan foliation
and $({\mathcal {R}},{\mathcal {F}})$ be the associated lifted $e$-foliation.
Then:
\begin{itemize} \item[(i)] the closure of the leaves of the foliation
$\mathcal {F}$ are fibers of a certain locally trivial fibration
$\pi_b\colon{\mathcal {R}}\to W;$
\item[(ii)] the foliation $(\overline{\mathcal {L}},{\mathcal {F} }|_{\overline{\mathcal {L}}})$
induced on the closure $\overline{\mathcal {L}}$ is a Lie foliation with dense leaves
with the structural Lie algebra $\mathfrak g_0$, that is the same for
any $\mathcal {L}\in\mathcal {F}.$
\end{itemize}
\end{theorem}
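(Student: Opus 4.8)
The plan is to obtain both assertions as a direct application of the structure theory of complete transversally parallelizable foliations --- due to P.~Molino \cite{Mo} in the Riemannian setting and to L.~Conlon \cite{Con} for complete $e$-foliations --- to the associated lifted foliation $({\mathcal R},{\mathcal F})$ furnished by Proposition~\ref{pr2}.

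First I would record that $({\mathcal R},{\mathcal F})$ is a transversally parallelizable (i.e. $e$-)foliation. This is already item (iii) of Proposition~\ref{pr2}; concretely, by items (i)--(ii) the form $\beta$ determines, for each $A\in{\mathfrak g}$, a unique vector field $X_A\in{\mathfrak X}_{\widetilde{\mathfrak M}}({\mathcal R})$ with $\beta(X_A)=A$, and these fields project to a parallelism of the transverse bundle $T{\mathcal R}/T{\mathcal F}\cong\widetilde{\mathfrak M}$. Item (iv), i.e. $L_X\beta=0$ for $X$ tangent to ${\mathcal F}$, shows that the $X_A$ are foliate, so this parallelism is transverse in the required sense. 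Next I would translate the completeness hypothesis: by the definition in subsection~\ref{sscom}, $(M,F)$ being complete means exactly that every transverse field $X$ with $\beta(X)=\mathrm{const}$ is complete, i.e. each $X_A$ is a complete vector field on ${\mathcal R}$. Hence $({\mathcal R},{\mathcal F})$ is a complete $e$-foliation in the sense of Conlon, and (choosing a bundle-like metric for which the $X_A$ are Killing) a transversally complete foliation in the sense of Molino.

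Applying the Molino--Conlon structure theorem to $({\mathcal R},{\mathcal F})$ then yields precisely the statements of the theorem: the leaf closures $\overline{\mathcal L}$ are the fibres of a locally trivial fibration $\pi_b\colon{\mathcal R}\to W$ onto a smooth manifold $W$; on each fibre $\overline{\mathcal L}$ the induced foliation ${\mathcal F}|_{\overline{\mathcal L}}$ is a Lie foliation with dense leaves; and the structural Lie algebra of this Lie foliation is independent of the chosen leaf ${\mathcal L}\in{\mathcal F}$. That common Lie algebra is, by definition, the structural Lie algebra ${\mathfrak g}_0={\mathfrak g}_0(M,F)$, which establishes (i) and (ii).

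The one point deserving care --- and the only real obstacle --- is matching the present notion of ${\mathfrak M}$-completeness with the completeness hypotheses under which the theorems of Molino and Conlon are stated. This is done by unwinding the construction of the lifted foliation, using that the $X_A$ form a global transverse parallelism of $({\mathcal R},{\mathcal F})$ and that $\beta$ is ${\mathcal F}$-invariant, so that the (now complete) flows of the $X_A$ act on ${\mathcal R}$ and permute the leaves of ${\mathcal F}$. Once this identification of hypotheses is in place, the remaining content --- in particular the independence of ${\mathfrak g}_0$ of the leaf and its invariance in the category ${\mathfrak F}{\mathfrak o}{\mathfrak l}$ --- is exactly what the cited theory supplies (cf. also \cite{Les}).
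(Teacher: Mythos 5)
Your proposal is correct and follows essentially the same route as the paper, which derives Theorem \ref{Th6} precisely by applying the Molino--Conlon structure theory of complete transversally parallelizable foliations to the lifted foliation $({\mathcal R},{\mathcal F})$ furnished by Proposition \ref{pr2}. Your additional care in matching the $\mathfrak M$-completeness hypothesis (completeness of the fields $X_A$ with $\beta(X_A)=\mathrm{const}$) with Conlon's notion of a complete $e$-foliation is exactly the identification the paper relies on implicitly.
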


\begin{defn} The structural Lie algebra $\mathfrak g_0$ of the Lie foliation
$(\overline{\mathcal{L}},{\mathcal {F}}|_{\overline{\mathcal {L}}})$ is called
{\it the structural Lie algebra} of the complete
foliation $(M, F)$ and is denoted by $\mathfrak g_0=\mathfrak g_0(M, F).$
\end{defn}

If $(M, F)$ is a Riemannian foliation on a
compact manifold, this notion coincides with the notion of the
structural Lie algebra in the sense of P. Molino~\cite{Mo}.

\begin{defn} The fibration $\pi_b\colon{\mathcal{R}}\to W$ satisfying
Theorem \ref{Th6} is called the {\it basic fibration} for $(M,F).$
\end{defn}

\section{Basic automorphisms of Cartan foliations}
\subsection{Groups of basic automorphisms of Cartan foliations}

\begin{defn} Let ${A}(M,F)$ be the full automorphism group of a Cartan foliation
$(M, F)$ in the category of Cartan foliation $\mathfrak C \mathfrak F$.
 The group
\begin{equation}
{ A}_L(M,F):=\{f\in{A}(M, F)\mid
f(L_\alpha)=L_\alpha\,\,\,\forall L_\alpha\in F\} \nonumber
\end{equation}
is a normal subgroup of ${ A}(M, F)$ which is called the {\it leaf
automorphism group} of $(M, F).$ The quotient group ${ A}(M,F)/{A}_L(M,F)$ is called
 {\it the basic automorphism group} and is denoted by ${A}_B(M,F).$
\end{defn}

Let us emphasize, that the basic automorphism group ${
A}_B(M, F)$ of a Cartan foliation $(M, F)$ is an invariant of this
foliation in the category ${\mathfrak C}{\mathfrak F}$.

\subsection{Properties of the basic automorphism groups\\ of Cartan foliations}
For a Cartan foliation with effective  transverse Cartan geometry
 Proposition \ref{pr5}   follows from (\cite{ZhR}, Proposition 9).
\begin{prop}
\label{pr5} Let $(M, F)$ be a Cartan foliation  modelled on an
effective Cartan geometry. Let $A^{H}({\mathcal {R}},{\mathcal {F}}):=\{h\in
A({\mathcal {R}}, {\mathcal {F}})\,|\,R_{a} \circ h=h \circ  R_{a} \,\,\,\forall
a \in H\}$,  $A_{L}^{H}({\mathcal {R}}, {\mathcal {F}}):=
\{h\in A_{L}({\mathcal {R}},{\mathcal {F}})\,|\,R_{a}\circ h=h\circ R_{a}\,\,\,
\forall a \in H\}$ and $A^{H}_B({\mathcal {R}}, {\mathcal {F}})$ be the quotient
group $A^{H}({\mathcal {R}}, {\mathcal {F}})/A^{H}_{L}({\mathcal {R}}, {\mathcal {F}})$.

Then there exists the group isomorphism $\delta: A^{H}_B({\mathcal
{R}}, {\mathcal {F}})\to A_{B}(M, F) $ satisfying the commutative diagram
$$\begin{CD}
\xymatrix{
 A^{H}({\mathcal {R}}, {\mathcal {F}})  \ar@{->}[d]_{\alpha^{H}} \ar@{->}[r]^{{\mu}}& A(M, F) \ar@{->}[d]^{\alpha} \\
 A^{H}_{B}({\mathcal {R}}, {\mathcal {F}})   \ar@{->}[r]^{\delta}& A_{B}(M, F).}
\end{CD}$$
where $\alpha^{H}$ and $\alpha$ are the group epimorphisms onto
the indicated quotient groups.
\end{prop}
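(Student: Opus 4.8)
The plan is to build the isomorphism $\delta$ from a single homomorphism $\mu\colon A^{H}(\mathcal R,\mathcal F)\to A(M,F)$ obtained by ``passing to the quotient by the principal $H$-action'', and to verify that $\mu$ is onto and that $\mu^{-1}(A_L(M,F))=A^{H}_L(\mathcal R,\mathcal F)$; both points rest on the canonical (functorial) nature of the associated foliated bundle of Proposition~\ref{pr2} and on effectiveness of the transverse geometry. In fact, once effectiveness is assumed, Remark~\ref{R3} says that $(M,F)$ is a Cartan foliation in the sense of \cite{Min} as well, so its transverse geometry is an effective transverse rigid geometry and Proposition~\ref{pr5} is the translation to Cartan foliations of (\cite{ZhR}, Proposition~9); one may simply quote that result, or argue directly as below.

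Construction of $\mu$. For $h\in A^{H}(\mathcal R,\mathcal F)$, commutation of $h$ with the $H$-action forces $h$ to map $\pi$-fibres to $\pi$-fibres, hence to descend to $\bar h\in\mathrm{Diff}(M)$ with $\pi\circ h=\bar h\circ\pi$. Since $\pi$ carries the leaves of $\mathcal F$ onto the leaves of $F$ and $h$ preserves $\mathcal F$, the map $\bar h$ sends leaves of $F$ to leaves of $F$; since $h$ preserves $\beta$, and the transverse Cartan structure of $(M,F)$ is precisely what $\beta$ encodes on $\mathcal R$ (Proposition~\ref{pr2}), the local lifts of $\bar h$ respect the induced transverse Cartan geometries, so $\bar h$ preserves the transverse Cartan structure. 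Thus $\mu(h):=\bar h\in A(M,F)$, $\mu$ is a group homomorphism, and $\mu(A^{H}_L(\mathcal R,\mathcal F))\subseteq A_L(M,F)$ (a leaf-preserving $h$ visibly projects to a leaf-preserving $\bar h$).

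Surjectivity and the kernel. Given $f\in A(M,F)$, by definition of a morphism of Cartan foliations $f$ is covered, over each distinguished chart, by an isomorphism $\Phi$ of the induced transverse Cartan geometries; by effectiveness these local lifts are unique, hence they patch to a global $H$-equivariant, $\beta$-preserving automorphism $\widehat f$ of $\mathcal R$ with $\mu(\widehat f)=f$, so $\mu$ is onto. For the reverse inclusion $\mu^{-1}(A_L(M,F))\subseteq A^{H}_L(\mathcal R,\mathcal F)$: let $\bar h=\mu(h)$ fix every leaf of $F$ setwise. Over a distinguished chart $U_i$ with $f_i\colon U_i\to N$, the map $\bar h|_{U_i}$ induces the identity on the local transversal $f_i(U_i)$; since the transverse geometry is effective, the only automorphism of the induced geometry $\xi_{f_i(U_i)}$ projecting to $\mathrm{id}_{f_i(U_i)}$ is the identity (Remark~\ref{r4}), so $h|_{\mathcal R|_{U_i}}$ induces the identity on the local transversal of $\mathcal F$ and therefore preserves each local leaf of $\mathcal F$ over $U_i$; local leaves patch into global leaves, whence $h\in A_L(\mathcal R,\mathcal F)$ and so $h\in A^{H}_L(\mathcal R,\mathcal F)$. (If $(M,F)$ is moreover complete, the same conclusion follows more conceptually from Proposition~\ref{pr3}: $\bar h|_L=\mathrm{id}_L$ forces $h|_{\mathcal L}$ to be a deck transformation of $\pi|_{\mathcal L}\colon\mathcal L\to L$, and effectiveness pins this deck transformation down to the identity.)

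Conclusion. From $\mu(A^{H}_L(\mathcal R,\mathcal F))\subseteq A_L(M,F)$ and $\mu^{-1}(A_L(M,F))\subseteq A^{H}_L(\mathcal R,\mathcal F)$ one gets $\mu^{-1}(A_L(M,F))=A^{H}_L(\mathcal R,\mathcal F)$; together with surjectivity of $\mu$ this shows that $\mu$ descends to a group isomorphism $\delta\colon A^{H}_B(\mathcal R,\mathcal F)\to A_B(M,F)$, and the square with vertical maps $\alpha^{H}$ and $\alpha$ commutes by the very definition of $\delta$. I expect the step $\mu^{-1}(A_L(M,F))\subseteq A^{H}_L(\mathcal R,\mathcal F)$ to be the crux: it is the only place where effectiveness of the transverse Cartan geometry is genuinely used, and it is exactly the phenomenon that fails — and is repaired by first passing to $A^{H}_B$ — in the non-effective case.
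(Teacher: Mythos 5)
Your overall architecture --- descend through the principal $H$-action to get $\mu\colon A^{H}(\mathcal{R},\mathcal{F})\to A(M,F)$, prove surjectivity from effectiveness via uniqueness of local lifts, and identify $\mu^{-1}(A_L(M,F))$ with $A^{H}_L(\mathcal{R},\mathcal{F})$ --- is the right one, and the paper itself offers nothing beyond a citation of Proposition 9 of \cite{ZhR}, which you also propose as an alternative. The construction of $\mu$ and the surjectivity argument are essentially sound.

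However, the step you yourself flag as the crux contains a genuine gap. From $\bar h\in A_L(M,F)$ you infer that ``$\bar h|_{U_i}$ induces the identity on the local transversal $f_i(U_i)$.'' This is false in general: an automorphism mapping every leaf onto itself need not preserve plaques; it may permute the plaques of a single leaf inside a distinguished chart, and then the induced local transformation of $f_i(U_i)$ is a nontrivial element of the holonomy pseudogroup. Concretely, take the M\"obius band foliated by circles as a Riemannian (hence Cartan, with $H=O(1)$, effective) foliation: the reflection across the central circle lies in $A_L(M,F)$ but acts on a local transversal as $t\mapsto -t$. Its lift $h$ to the transverse frame bundle likewise fails to induce the identity on local transversals of $\mathcal{F}$, so your local-to-global patching never gets started; yet $h$ does preserve every leaf of $\mathcal{F}$, because the plaques over $t$ and $-t$ carry opposite framings and lie on one and the same lifted leaf. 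The parenthetical ``more conceptual'' variant has the same defect: $\bar h\in A_L(M,F)$ gives $\bar h(L)=L$, not $\bar h|_L=\mathrm{id}_L$, and asserting that $h|_{\mathcal{L}}$ is a deck transformation of $\pi|_{\mathcal{L}}$ already presupposes $h(\mathcal{L})=\mathcal{L}$, which is exactly what must be proved. The correct mechanism is global and runs through Proposition \ref{pr3}: the leaves of $\mathcal{F}$ over a fixed leaf $L$ are the $R_a(\mathcal{L})$, $a\in H$, with $R_a(\mathcal{L})=\mathcal{L}$ precisely when $a\in H(\mathcal{L})\cong\Gamma(L,x)$; one must show that the element $c\in H$ with $h(\mathcal{L})=R_c(\mathcal{L})$ --- which records the holonomy transformation by which $\bar h$ shifts the plaques of $L$ --- lies in $H(\mathcal{L})$. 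It is there, and in killing the residual gauge ambiguity via Remark \ref{r4}, that effectiveness enters; it is not a purely local statement about transversals.
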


Assume that the structural Lie algebra $\mathfrak g_0 = \mathfrak g_0(M, F)$ is zero
for a complete Cartan foliation $(M, F)$.
Then the lifted foliation $(\mathcal {R}, \mathcal {F})$ is formed by fibres
of the locally trivial fibration $\pi_b: {\mathcal {R}}\to W$  and the
$\mathfrak{g}$-valued $1$-form $\beta$ on ${\mathcal {R}}$ is determined according
to Proposition $\ref{pr2}$.
In compliance with (\cite{Min}, Proposition 4) the map
\begin{gather}\label{d1}
 W\times H\to W: (w,a)\mapsto \pi_b(R_a(u))\,\,\,\, \forall\, (w,a)\in W\times H,
u\in\pi_b^{-1}(w), \nonumber
\end{gather}
defines a locally free action of the Lie group $H$ on the basic
manifold $W$, and the orbits space $W/H$ is homeomorphic to the leaf
space $M/F$. Identify $W/H$ with $M/F$. Connected components of the
orbits of this action form a regular foliation $(W, F^H)$. The
equality $\pi_b^*\widetilde{\beta}: = \beta$ defines an $\mathfrak
g$-valued $1$-form $\widetilde{\beta}$ on $W$ such that
$\widetilde{\beta}(A_W^*) = A$, where $A_W^*$ is the fundamental
vector field on $W$ defined by $A\in\mathfrak h\subset\mathfrak g.$

Denote by ${A}(W,\widetilde{\beta})$ the Lie group of automorphisms
of the parallelizable manifold $(W,\widetilde{\beta})$, i.e.,
${A}(W,\widetilde{\beta}) = \{f\in Diff(W)\,|\, f^{*}\widetilde{\beta} = \widetilde{\beta}\}$.
Let 
$${ A}^{H}( W,\widetilde{\beta}) = \{f\in{ A}(W,\widetilde{\beta})\,|\, f\circ R_a = R_a\circ f, a\in H \}.$$
Then ${ A}^{H}(W,\widetilde{\beta})$ and its  unity component
${ A}^{H}_{e}(W,\widetilde{\beta})$ are Lie groups as closed subgroups of ${A}(W,\widetilde{\beta})$.

\begin{prop}
\label{pr5.2}
Let $(M, F)$ be a complete Cartan foliation  with an effective
trans\-ver\-se geometry and ${\mathfrak g}_{0}={\mathfrak g}_{0}(M, F)=0 $,  $(W,\widetilde{\beta})$
be the corresponding parallelisable basic  manifold for the lifted foliation
$({\mathcal {R}}, {\mathcal {F}})$, where $W={\mathcal {R}}/ {\mathcal {F}}$.
Then there exists a Lie group monomorphism
\begin{equation}
\nu:A^{H}_{B}({\mathcal {R}}, {\mathcal {F}})\to A^{H}(W,\widetilde{\beta}): h\cdot A^{H}_{L}({\mathcal {R}},
{\mathcal {F}})\mapsto \widetilde{h,}  \nonumber
\end{equation}
where $h\in A^{H}({\mathcal {R}}, {\mathcal {F}})$ and $\widetilde{h}$
is the projection of $h$ with respect to the basic fibration
$\pi_{b}: {\mathcal {R}}\to W$, and $Im(\nu)$ is an open-closed Lie subgroup of $A^{H}(W, \widetilde{\beta})$.

Consequently,
  $\varepsilon=\nu\circ \delta^{-1}: A_{B}(M,F)\to A^{H}(W, \widetilde{\beta})$
is a Lie group monomorphism, and $Im(\varepsilon)$ is an open-closed
Lie subgroup of $A^{H}(W, \widetilde{\beta})$.
\end{prop}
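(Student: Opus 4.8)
The plan is to construct the map $\nu$ explicitly, show it is well defined, injective, a Lie group homomorphism, and finally that its image is open and closed. First I would recall that since $\mathfrak g_0 = \mathfrak g_0(M,F) = 0$, Theorem~\ref{Th6} forces the leaves of $(\mathcal R,\mathcal F)$ to coincide with their closures, so the basic fibration $\pi_b\colon\mathcal R\to W$ is a locally trivial fibration whose fibres are exactly the leaves of $\mathcal F$. Given $h\in A^H(\mathcal R,\mathcal F)$, the diffeomorphism $h$ permutes leaves of $\mathcal F$, hence permutes the fibres of $\pi_b$, so it descends to a unique diffeomorphism $\widetilde h\in\mathrm{Diff}(W)$ with $\pi_b\circ h = \widetilde h\circ\pi_b$. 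Since $h^*\beta = \beta$ (as $h\in A(\mathcal R,\mathcal F)\subset A(\mathcal R,\beta)$ — here I use that automorphisms of the transversally parallelizable lifted foliation preserve $\beta$, which is part of how $A(\mathcal R,\mathcal F)$ is set up) and $\pi_b^*\widetilde\beta = \beta$, a diagram chase using surjectivity of $\pi_{b*}$ gives $\widetilde h^*\widetilde\beta = \widetilde\beta$, i.e.\ $\widetilde h\in A(W,\widetilde\beta)$. The $H$-equivariance $h\circ R_a = R_a\circ h$ descends likewise to $\widetilde h\circ R_a = R_a\circ\widetilde h$ on $W$ (the $H$-action on $W$ being the one induced by $R_a$ on $\mathcal R$ through $\pi_b$), so in fact $\widetilde h\in A^H(W,\widetilde\beta)$. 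Setting $\nu(h\cdot A^H_L(\mathcal R,\mathcal F)) := \widetilde h$ is well defined on the quotient because if $h\in A^H_L(\mathcal R,\mathcal F)$ then $h$ fixes every leaf, hence every fibre of $\pi_b$, so $\widetilde h = \mathrm{id}_W$; conversely this identifies $\ker$ of the map $h\mapsto\widetilde h$ on $A^H(\mathcal R,\mathcal F)$ precisely with $A^H_L(\mathcal R,\mathcal F)$, giving injectivity of $\nu$. Functoriality $\widetilde{h_1\circ h_2} = \widetilde h_1\circ\widetilde h_2$ is immediate from uniqueness of the descended map, so $\nu$ is a homomorphism.

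Next I would address smoothness: $\nu$ is a Lie group homomorphism between the (closed, hence Lie) subgroups of $\mathrm{Diff}$ described before the proposition. The cleanest route is to note that both $A^H(\mathcal R,\mathcal F)$ and $A^H(W,\widetilde\beta)$ act smoothly on their respective parallelizable-type manifolds, that a homomorphism of Lie groups is automatically smooth once it is continuous (or once it is a morphism of topological groups with the compact-open/$C^\infty$ topologies), and that $h\mapsto\widetilde h$ is continuous because $\pi_b$ is a submersion and local sections of $\pi_b$ let one express $\widetilde h$ locally as $\pi_b\circ h\circ s$. Combined with injectivity, $\nu$ is a Lie group monomorphism onto a Lie subgroup $\mathrm{Im}(\nu)\subset A^H(W,\widetilde\beta)$.

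The substantive point — and the step I expect to be the main obstacle — is showing $\mathrm{Im}(\nu)$ is \emph{open and closed} in $A^H(W,\widetilde\beta)$. Openness amounts to showing that $\mathrm{Im}(\nu)$ contains the identity component $A^H_e(W,\widetilde\beta)$: every automorphism of $(W,\widetilde\beta)$ sufficiently close to the identity, being $H$-equivariant and preserving $\widetilde\beta$, must be liftable to an automorphism of $(\mathcal R,\mathcal F)$. This is where one uses that $\pi_b\colon\mathcal R\to W$ is a fibre bundle with the lifted foliation $\mathcal F$ as fibres and that $\beta = \pi_b^*\widetilde\beta$ together with the transverse parallelizability rigidly pins down how an automorphism of $W$ must act on $\mathcal R$ — concretely, one lifts the flows of the $\widetilde\beta$-constant vector fields on $W$ (which generate a neighbourhood of $e$ in $A^H(W,\widetilde\beta)$) to the $\beta$-constant vector fields on $\mathcal R$, using completeness/the fibre-bundle structure to integrate them, and checks the lifted diffeomorphism preserves $\mathcal F$ and commutes with $H$. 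Closedness then follows from openness: $\mathrm{Im}(\nu)$ is a subgroup containing the open set $A^H_e(W,\widetilde\beta)$, hence is a union of cosets of an open subgroup, hence open, and an open subgroup of a topological group is automatically closed. Finally, the last sentence of the proposition is a formality: $\varepsilon := \nu\circ\delta^{-1}$ is the composition of the isomorphism $\delta^{-1}\colon A_B(M,F)\to A^H_B(\mathcal R,\mathcal F)$ from Proposition~\ref{pr5} with the monomorphism $\nu$, so it is a Lie group monomorphism with $\mathrm{Im}(\varepsilon) = \mathrm{Im}(\nu)$ open-closed in $A^H(W,\widetilde\beta)$.
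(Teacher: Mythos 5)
Your overall strategy is the same as the paper's: define $\nu$ by descending $h$ through $\pi_b$, identify the kernel of $h\mapsto\widetilde h$ with $A^H_L(\mathcal R,\mathcal F)$ to get a monomorphism, reduce open-closedness to showing $A^H_e(W,\widetilde\beta)\subset Im(\nu)$ by lifting the flows that generate the identity component, and finish by composing with $\delta^{-1}$ from Proposition~\ref{pr5}. The first half and the last step are fine.

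There is, however, a concrete error in how you propose to execute the pivotal step. You assert that the $\widetilde\beta$-constant vector fields on $W$ generate a neighbourhood of $e$ in $A^H(W,\widetilde\beta)$ and that one should lift them to $\beta$-constant fields on $\mathcal R$. This conflates two different families of fields on a parallelizable manifold: the fields $X$ with $\widetilde\beta(X)=\mathrm{const}$ (the parallelism itself) and the infinitesimal automorphisms, i.e.\ the complete fields with $L_X\widetilde\beta=0$, which form the Lie algebra of $A(W,\widetilde\beta)$. These do not coincide: on a Lie group with its Maurer--Cartan parallelism the constant fields are the left-invariant ones, whose flows are right translations and do \emph{not} preserve the form unless the group is abelian, while the automorphism group is generated by right-invariant fields, on which the form is not constant. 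So the flows you propose to lift are generally not in $A^H(W,\widetilde\beta)$ at all, and the fields you actually need to lift are generally not liftable to $\beta$-constant fields (their $\widetilde\beta$-values vary from point to point). The paper instead takes $X=B^*$ a fundamental vector field of the Lie group $A^H(W,\widetilde\beta)$, lifts it to the unique $\widetilde{\mathfrak M}$-horizontal field $Y$ with $\pi_{b*}Y=X$ (completeness of $Y$ coming from the Ehresmann connection property of $\widetilde{\mathfrak M}$ for the submersion $\pi_b$, not from the $\mathfrak M$-completeness axiom directly), and then proves by hand that $L_Y\beta=0$ and $[A^*,Y]=0$ for $A\in\mathfrak h$; the second verification, which uses the local bi-foliated charts and the commuting square relating $\pi$, $\pi_b$ and the local quotients, is the real work and is entirely absent from your outline. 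Your plan would need to be repaired along these lines before it constitutes a proof.
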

\begin{proof}
By condition $\mathfrak{g}_{0}(M,F)=0$ and the lifted foliation
$({\mathcal {R}}, {\mathcal {F}})$ is formed by fibres  of the submersion $\pi_{b}:{\mathcal {R}}\to W.$
Then every $h~\in~A^H~({\mathcal {R}}, {\mathcal {F}})$ induces $\widetilde{h}\in A^{H}(W, \widetilde{\beta}),$
and the map $\rho: A^H({\mathcal {R}}, {\mathcal {F}})\to A^{H}(W, \widetilde{\beta})$ is defined. It is clear that
$\rho$ is a group homomorphism with the kernel
$Ker (\rho)=A^H_{L}({\mathcal {R}}, {\mathcal {F}})$. As $ A^H_{L}({\mathcal {R}}, {\mathcal {F}})$
is the normal subgroup of $A^H({\mathcal {R}}, {\mathcal {F}})$, there exists a group
monomorphism $\nu:A_{B}^{H}({\mathcal {R}}, {\mathcal {F}})\to A^{H}(W, \widetilde{\beta})$ satisfying
the equality \- ${\rho: = \nu\circ\alpha^H,}$ where
$\alpha^H: A^{H}({\mathcal {R}}, {\mathcal {F}})\to A_{B}^{H}({\mathcal {R}}, {\mathcal {F}})$
is the natural projection onto the quotient group
$A_{B}^{H}({\mathcal {R}}, {\mathcal {F}})=A^{H}({\mathcal {R}}, {\mathcal {F}})/A_{L}^{H}({\mathcal {R}}, {\mathcal {F}})$.

Suppose that $A^{H}(W, \widetilde{\beta})$ is a discrete Lie group,
then $A_{B}^{H}({\mathcal {R}}, {\mathcal {F}})$
is also discrete Lie group and the required statement is true.

Further we assume that $\dim (A^{H}(W))\geq 1$.

Let $\mathfrak{a}$ be the Lie algebra of the Lie group $A^{H}(W, \widetilde{\beta})$.
Let $B^{*}$ be the fundamental vector field defined by $B\in \mathfrak a$.
Hence $X: = B^*$ is a complete vector field on $W$, which defines an $1$-parameter group
$\phi_{t}^{X}, \,\,\, t\in(-\infty,+\infty)$,
of transformations from $A^{H}(W, \widetilde{\beta})$.

Let $f$ be any element from the identity component
$A^H_e(W, \widetilde{\beta})$ of the Lie group $A^{H}(W, \widetilde{\beta})$.
Then there exists $B\in\mathfrak a$ and $t_0\in
(-\infty,+\infty)$ such that $f = \phi_{t_0}^{X}$ where $X = B^*$.
Since $\pi_b: {{\mathcal {R}}\to W}$ is the submersion with the Ehresmann
connection $\widetilde{\mathfrak M}$, where $\widetilde{\mathfrak M}
=\pi^{*}{\mathfrak M}$, there exists the unique vector field $Y\in\mathfrak
X_{\widetilde{\mathfrak M}}(\mathcal {R})$ such that $\pi_{b^*} Y = X$. The
completeness of the vector field $X$ implies the
completeness of the vector field $Y$.  Hence
$Y$ defines a $1$-parameter group $\psi_{t}^{Y}, \,\,\,
t\in(-\infty,+\infty)$,
 of diffeomorphisms of the manifold $\mathcal {R}$.

 Let us shows that
$\psi_{t}^{Y} \in A^H_{e}({\mathcal {R}},{\mathcal {F}})$ for all $t\in(-\infty,+\infty)$,
i.e., we have to check the validity  of the following facts:
1) the map $\psi_{t}^{Y},\,t\in(-\infty,+\infty)$, is an  isomorphism of
$({\mathcal {R}},{\mathcal {F}})$ in the category $\mathfrak{F}\mathfrak{o}\mathfrak{l}$;
2) $L_{Y}\beta=0$; 3) $L_{Y}A^{*}=0$ for all $A\in \mathfrak h$.

1) The equality $\pi_{b^{*}}Y = X$ implies the relation
$\pi_{b}\circ \psi_{t}^{Y}=\psi_{t}^{X}\circ \pi_{b}$ for any fixed  $t~\in~(-\infty,~\infty)$,
hence $\psi_{t}^{Y}(\pi_{b}^{-1}(v))=\pi_{b}^{-1}(\psi_{t}^{X}(v))$ for all $v\in W,$
and $\psi_{t}^{Y}$ is the isomorphism the lifted foliation $({\mathcal {R}},{\mathcal {F}})$
in the category $\mathfrak F \mathfrak o \mathfrak l$.

2) Take arbitrary $u\in {\mathcal {R}}$ and $Z_{0}\in \widetilde{\mathfrak M}_{u}$.
There is the unique vector field $Z\in \mathfrak X_{\widetilde{\mathfrak M}}({\mathcal {R}})$
such that $Z|_{u}=Z_{0}$ and $\beta(Z)=\beta(Z_{0})=const.$ Put $Z_{W}:=\pi_{b^{*}}Z$
and apply the following formula \cite{K}
\begin{equation}
(L_{X}{\widetilde{\beta}})(Z_{W})=X(\widetilde{\beta}(Z_{W}))-\widetilde{\beta}([X, Z_{W}]).\label{F1}
\end{equation}

The relation $\beta=\widetilde{\beta}\circ\pi_{b^{*}}$ implies that
$\widetilde{\beta}(Z_{W})=\beta(Z_{0})=const,$ so $X(\widetilde{\beta}(Z_{W}))=0$.
By the choice of $X$,  $\phi_{t}^{X}\in A^{H}(W,\widetilde{\beta})$,
therefore  we have $L_{X}\widetilde{\beta}=0.$ Hence the equality \eqref{F1} gives
\begin{equation}
\label{F2}
\widetilde{\beta}([X, Z_{W}])=0.
\end{equation}
In the formula
\begin{equation}
(L_{Y}{\beta})(Z)=Y(\beta(Z))-\beta([Y, Z]).\label{F3}
\end{equation}
the  first term $Y(\beta(Z))=0$, because $\beta(Z)=const.$ The relations
$\beta=\widetilde{\beta}\circ \pi_{b^{*}}$ and \eqref{F2} imply the following of equalities:
$$\beta([Y,Z])=\widetilde{\beta}(\pi_{b^{*}}[Y,Z])=\widetilde{\beta}([\pi_{b^{*}}Y,\,\pi_{b^{*}}Z])
=\widetilde{\beta}([X,Z_{W}])=0.$$
Therefore \eqref{F3} implies that $(L_{Y}\beta)(Z)=0$. Thus, $L_{Y}\beta=0$.

3) Denote by $(W, F^{H})$ the foliation formed by  the connected  components
of orbits of the action $\Phi^{W}$ the defined above of $H$ on $W$.
Let $({\mathcal {R}}, {\mathcal {F}}^{H})$ be the foliation formed
by the connected components  of orbits of the Lie group $H$ on ${\mathcal {R}}$.

At any point $u\in {\mathcal {R}}$ there is an neighbourhood ${\mathcal {W}}$
foliated with respect to both foliations $({\mathcal {R}}, {\mathcal {F}})$ and
$({\mathcal {R}}, {\mathcal {F}}^{H})$ which meets each leaf of these foliation
in at most one  connected subset.  We can suppose  that the basic fibration
$\pi_{b}:{\mathcal {R}}\to{{W}}$ is the trivial in the neighbourhood
$\pi_{b}^{-1}({\mathcal {V}}),$ where ${\mathcal {V}}:=\pi_{b}({\mathcal {W}})$.
Put $U=\pi({\mathcal {W}})$. Let $r:U\to U/(\mathcal{F}|_{U})$ and
$s:{\mathcal {V}}\to{\mathcal {V}}/({\mathcal {F}}^{H}|_{\mathcal {V}})$ be the
quotient maps. We can identify $ U/(\mathcal{F}|_{U})$ and ${\mathcal {V}}/({\mathcal {F}}^{H}|_{\mathcal {V}})$
with the manifold $V$ such that the diagram
\begin{equation}
\begin{CD}
\xymatrix{
 {\mathcal {W}} \ar@{->}[d]_{\pi} \ar@{->}[r]^{\pi_{b}}&{\mathcal {\mathcal {V}}} \ar@{->}[d]^{s} \\
U  \ar@{->}[r]^{r}& V,}
\label{D1}
\end{CD}
\end{equation}
where the restriction  of $\pi$ and $\pi_{b}$ onto ${\mathcal {W}}$
are denote by the same letters,  is commutative.  Without lost the generality,
we can assume  that ${\mathfrak M}|_{U}$ is an Ehresmann connection for
the submersion $r$ and $\widetilde{\mathfrak M}|_{\mathcal {W}}$
is an Ehresmann connection  for the submersion $\pi_{b}$.

 Let $A^{*}_{W}$ be the fundamental vector field on $W$ defined by $A\in \mathfrak{h}$.
 Since the action $\Phi^{H}$ of the Lie group $H$ on $W$ is defined  any element
$A\in\mathfrak {h}$ defines $1$-parametric group  tangent vector field to which
is called  the fundamental vector field and  is denoted by $A^{*}_{W}$. By the
choice of $X:=B^{*}$ on $W$ for any $A\in\mathfrak h$ we have the equality
$L_{X}A^{*}_{W}=0, $ i.e., $[A^{*}_{W},X]=0$. Since the fundamental vector fields
$A^{*}_{W}$ span the tangent spaces to the leaves of the foliation $(W, F^{H})$,
it is not difficult  to cheek that $X$ is the foliated vector field for this foliation.
Hence the vector field $X_{V}:=s_{*}X|_{\mathcal {V} }$ is well defined.
There is the unique vector field $Y_{U}\in\mathfrak{X}_{\mathfrak M}(U)$ such that
$r_{*}Y_{U}=X_{V}$. In other words, $Y_{U}$ is the $\mathfrak{M}$-horizontal lift of $X_{V}$.
 The commutative diagram \eqref{D1} implies the relation $\pi_{*}Y_{\mathcal {W}}=Y_{U}$,
hence $Y$ is a foliated vector field with respect to the foliation
$({\mathcal {R}}, {\mathcal {F}}^{H})$. Therefore,
\begin{equation}
\label{F4}
[A^{*}, Y]\in {\mathfrak X}_{{\mathcal {F}}^{H}}({\mathcal {R}}).
 \end{equation}
Due to the equalities $\beta(A^{*})=\widetilde{\beta}(A^{*}_{W})=A,$ the
vector field $A^{*}$ is foliated with respect to $({\mathcal {R}}, {\mathcal {F}})$.
So we have the following chain of equalities
$$\pi_{b^{*}}[A^{*},Y]=[\pi_{b^{*}}A^{*}, \pi_{b^{*}}Y]=[A^{*}_{W}, X]=0,$$
hence,
 \begin{equation}\label{F5}
 [A^{*}, Y]\in {\mathfrak X}_{{\mathcal {F}}}({\mathcal {R}}).
 \end{equation}
 The relations \eqref{F4} and \eqref{F5} imply the equality $ [A^{*}, Y]=0$ for all
$A\in\mathfrak h.$ This ends the check that $\phi_{t}^{Y}\in {{A}}_{e}^{H}({\mathcal {R}},{\mathcal {F}})$,
and consequently $\phi_{t}^{X}\in {{A}}_{e}^{H}(W, \widetilde{\beta})$.
Thus, we proved the inclusion ${A}_{e}^{H}(W, \widetilde{\beta})\subset Im(\rho) = Im(\nu)$.
Therefore $Im(\nu)$ is an open-closed Lie subgroup of ${A}^{H}(W, \widetilde{\beta})$

Therefore, $\varepsilon=\nu\circ \delta^{-1}: A_{B}^{H}({\mathcal {R}},
{\mathcal {F}})\to {A}^{H}(W, \widetilde{\beta}):
\widehat{f}\cdot A_{L}^{H}({\mathcal {R}}, {\mathcal {F}})\mapsto f$
is the monomorphism of Lie groups, and
$Im(\varepsilon)=\varepsilon(A_{B}^{H}({\mathcal {R}},{\mathcal {F}}))$ is
an open-closed Lie subgroup of the Lie group of ${A}^{H}(W, \widetilde{\beta})$.
\end{proof}

\subsection{Proof Theorem \ref{Th1}}
\label{Prove1}
Let $(M, F)$ be a Cartan foliation modelled on a Cartan geometry
$\widetilde{\xi}=(\widetilde{P}(N,H),\widetilde{\omega})$
 of type $(\widetilde{G},\widetilde{H})$ and the Lie group $K$ be the kernel of
the pair Lie groups  $(\widetilde{G},\widetilde{H})$, $\mathfrak k$ be
the Lie algebra of $K$. Then the associated effective
 Cartan geometry $\xi = (P(M,H),\omega)$ of type $(G,H)$,
where $G=\widetilde{G}/{K}$, $H = \widetilde{H}/{K}$, is defined. Here
$\omega$ is the ${\mathfrak g}$-valued $1$-form on $P$, where
$\mathfrak g = \widetilde{\mathfrak g}/{\mathfrak k}$.
According to Proposition \ref{pr2} the associated foliated bundle
${\mathcal {R}}(M,H)$ with the lifted foliation $({\mathcal {R}}, {\mathcal {F}})$
and the projection $\pi: {\mathcal{R}}\to M$ are defined.

If $\dim (A^{H}(W, \widetilde{\beta}))=0$, then according to Proposition
$\ref{pr5.2}$, $A_{B}(M, F)$ is a discrete Lie group, hence the estimates
\eqref{oz1} and \eqref{oz2} are valid.

Suppose now that $\dim (A_{B}(M, F))\geq 1$.

Denote by $A_{B}(M, F)_{e}$ the unite component of the Lie group $A_{B}(M, F)$.
According to Proposition~\ref{pr5.2}, $\varepsilon|_{ A_{B}(M, F)_{e}}: A_{B}(M, F)_{e}
\to{A}^H_{e}(W, \widetilde{\beta})$ is the group isomorphism.
Therefore the basic automorphism group $A_B(M, F)$ admits a Lie group structure
of the dimension not more then $\dim(W) = \dim(\mathfrak g)$.
Because $\frak g = \widetilde {\frak g} /
{\frak k}$ we have $\dim(\frak g) = \dim(\widetilde {\frak g} ) -
\dim(\frak k)$. Hence the dimension of the Lie group $A_B(M, F)$
satisfies the following inequality $\dim(A_{B}(M, F)) \leq
\dim(\widetilde {\frak g} ) - \dim(\frak k).$ As the Lie group
$A_B(M, F)_e$ is realized as a closed subgroup of the automorphism
Lie group ${A}(W,\widetilde{\beta})$ of a parallelizable manifold, then it
admits a unique topology and a unique smooth structure that make it into a Lie group
(\cite{BZ}, Proposition 1). The same is valid for the group $A_B(M, F)$.

$(a)$ Let $s:W\rightarrow W/H$ be the projection onto the orbit space.
Assume now that there exists an isolated proper leaf $L$ of the
foliation $(M,F).$  Let $x\in L$, $u=\pi^{-1}(x)$ and $w =\pi_{b}(u)\in W$.
 Observe, that any automorphism  of a foliation transforms a proper leaf
to the corresponding proper leaf.   Then $q(L) = s(w)$ and
the orbit $A^{H}_e(W, \widetilde{\beta})\cdot w$ belongs to $s^{-1}(s(w))$.
Consequently we have $\dim(A_{B}(M, F)) = 
\dim(A^H_{e}(W,\widetilde{\beta})\cdot w) \leq \dim(H)= \dim(\mathfrak{h})
= \dim(\widetilde{\frak  h}) - \dim(\frak k)$.

 Thus $\dim({A_B}(M, F))\leq \dim(\widetilde{\frak h}) - \dim({\frak k})$.

Suppose now that the  set of proper leaves of $(M, F)$ is countable (non\-empty).
Consider any $1$-parametric group $\varphi_{t}$, $t\in(-\infty,\,+\infty)$
from the Lie group $A^H(W, \widetilde{\beta}) \cong A_B(M,F)_{e}$.
Let $L=L(x)$ be any leaf, $u=\pi^{-1}(x)$ and $w =\pi_{b}(u)\in W$. Let
$w\cdot H$ be the orbit of $w$ relatively $H$. Since for any fixed $t$ the
automorphism $\varphi_{t}$ transforms  a proper leaf $L$ to the proper leaf
$\varphi_{t}(L)$ the countability of  the set proper leaves implies that
$\varphi_{t}(w\cdot H)= w\cdot H$. Hence, by analogy with the previous
case we have the estimate \eqref{oz2}.

$(b)$ Now we suppose that the set of proper leaves $\{L_{n}\,|\,n\in \mathbb{N}\}$ be
countable and dense. Let $x_n\in L_n$, $u_n=\pi^{-1}(x_n)$ and
$w_n =\pi_{b}(u_n)\in W$. Let us assume that $\dim(A_{B}(M, F))~\geq 1$.
Let $\varphi_{t}$, $t\in(-\infty,\,+\infty)$, be any $1$-parametric subgroup
of the automorphism group $A^H(W, \widetilde{\beta}) \cong A_B(M,F)$.
As it was proved above, it is  necessary $\varphi_{t}(w_n\cdot H) = w_n\cdot H$
for all $t\in(-\infty,\,+\infty)$ and $n\in\mathbb N$.
Remark that the leaf space $M/F$ homeomorphic the orbit space $W/H$.
Denote by $\widetilde{\varphi}_{t}$ the induced $1$-parametric group of
homeomorphisms of the leaf space $M/F$. Therefore, for each
$t\in(-\infty,\,+\infty)$, $t\neq 0$, the homeomorphism $\widetilde{\varphi}_{t}$
has dense subset $\{[L_n]\,|\, [L_n] = s( w_n\cdot H), \, n\in \mathbb N\}$
of fixed points in the leaf space $M/F = W/H$.

Due to continuity and openness of the projection $q: M\to M/F$,
the leaf space $M/F$ is a first-countable space, that is every its point has
a countable neighbourhood basis. Then $\widetilde{\varphi}_{t}$ is sequentially continuous.
Therefore the existence of dense subset of fixed points of
the homeomorphism $\widetilde{\varphi}_{t}$ implies $\widetilde{\varphi}_{t} = id_{M/F}$.
Hence $\varphi_{t} = id_{W}$ that contradicts to the assumption.

Thus, $\dim (A_{B}(M, F))=0$ and $(\ref{oz3})$ is proved.

\subsection{Proof of Corollary \ref{c1}} Observe that  the existence
of a proper leaf $L$ with discrete holonomy group guarantees the
equality $\frak g_0(M, F)=0.$

Remark that any closed leaf of a foliation is proper and each finite
holonomy group is a discrete one. Hence we have implications
 $(iv)\Rightarrow (iii) \Rightarrow (ii) \Rightarrow (i).$
Thus, applying Theorem~\ref{Th1} we get the required assertion.

\subsection{Proof of Corollary \ref{c2}}
It is well known that any foliation has leaves without holonomy.
Therefore, the Corollary \ref{c2} follows from the item $(iii)$ of
Corollary \ref{c1}.

\section{The structure of Cartan foliations covered by\\ fibrations}

\subsection{$(G, B)$-foliations} \label{ss7.1}
Let $B$ be a connected smooth manifold and $G$ be a Lie group of
diffeomorphisms of $B$. The group $G$ is said\textit{ to act
quasi-analytically on $B$} if, for any open subset $V$ in $B$ and an
element $g\in G$ the equality $g|_V = id_V$ implies
$g = e$, where $e$ is the identity transformation of $B$.

\begin{defn} Assume that the Lie group  $G$ of diffeomorphisms of
a manifold $B$ acts on $N$ quasi-analytically. A foliation $(M, F)$
defined by an $B$-cocycle $\{U_i,f_i,\{\gamma_{ij}\}\}_{i,j\in J}$
is called a $(G, B)$-foliation if for any $U_i\cap U_j
\neq\emptyset$, $i,j\in J$, there is an element $g\in G$
such that $\gamma_{ij} = g|_{f_j(U_i\cap U_j)}$.
\end{defn}

\subsection{Proof of Theorem \ref{Th2}}
The following lemma will be useful for us.

\begin{lem}\label{L1} Let $\eta = (P(B,H),\beta)$ be an effective Cartan geometry
on a connected manifold $B$ and $\Phi$ be a group of automorphisms
of $(B,\eta)$. Then the group $\Phi$ acts quasi-analytically on $B$.
\end{lem}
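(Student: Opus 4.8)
The plan is to exploit the rigidity of Cartan geometries: an automorphism of an effective Cartan manifold is completely determined by its $1$-jet at a single point, because an automorphism of the Cartan geometry lifts to a bundle automorphism preserving the parallelism $\omega$, and such a bundle automorphism is determined by its value at one point. More precisely, suppose $g\in\Phi$ satisfies $g|_V=\mathrm{id}_V$ for some nonempty open $V\subset B$. Since $\eta$ is effective, by Remark \ref{r4} the epimorphism $\sigma\colon A^H(P,\beta)\to Aut(B,\eta)$ is an isomorphism, so there is a unique $\widehat g\in A^H(P,\beta)$ with projection $g$, i.e. $\widehat g^{*}\beta=\beta$ and $\widehat g\circ R_a=R_a\circ\widehat g$ for all $a\in H$. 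On $p^{-1}(V)$ the diffeomorphism $\widehat g$ covers the identity of $V$ and commutes with the right $H$-action, hence on each fibre $p^{-1}(x)$, $x\in V$, it is given by right translation $R_{c(x)}$ by some $c(x)\in H$; the condition $\widehat g^{*}\beta=\beta$ together with condition $(c_2)$ of Definition \ref{d3} forces $c(x)$ to lie in the kernel $N=\{h\in H\mid Ad_G(h)=\mathrm{id}_{\mathfrak g}\}$, which is trivial since $\eta$ is effective (Theorem 4.1 of \cite{Shar}). Therefore $\widehat g=\mathrm{id}$ on $p^{-1}(V)$.

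Next I would propagate this identity from $p^{-1}(V)$ to all of $P$ using connectedness of $B$. The key point is that $(P,\beta)$ is a parallelizable manifold, so $A(P,\beta)$ acts on $P$ in such a way that an element fixing an open set fixes a whole connected component: indeed, the flows of the $\beta$-constant vector fields (the "fundamental" vector fields $X_A$ with $\beta(X_A)\equiv A\in\mathfrak g$) are permuted equivariantly by any element of $A(P,\beta)$, and through any point of $P$ the orbit of the pseudogroup generated by these flows is the whole connected component. Since $\widehat g\in A^H(P,\beta)\subset A(P,\beta)$ and $\widehat g$ is the identity on the open set $p^{-1}(V)$, it agrees with the identity on an open neighbourhood of some point together with the germ of every $\beta$-constant flow there, and hence, by the standard connectedness/analytic-continuation argument for parallelisms, $\widehat g=\mathrm{id}_P$ on the connected component of $P$ containing $p^{-1}(V)$. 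As $B$ is connected and $p\colon P\to B$ is a surjective submersion with connected fibres (the fibres are diffeomorphic to $H$, which need not be connected, so here one uses that the base is connected and $P$ is therefore connected, or one argues component-wise and uses $H$-invariance to pass between fibre components), we conclude $\widehat g=\mathrm{id}_P$, whence $g=\sigma(\widehat g)=\mathrm{id}_B$.

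Finally, translating back: we have shown that $g|_V=\mathrm{id}_V$ on a nonempty open $V$ implies $g=\mathrm{id}_B=e$ in $\Phi$, which is exactly the quasi-analyticity of the $\Phi$-action on $B$ in the sense of subsection \ref{ss7.1}. The main obstacle is the second step — the rigorous "unique continuation" argument showing that a $\beta$-preserving diffeomorphism that is the identity on an open set is the identity on the whole connected manifold; this requires being careful that $H$ may be disconnected, so that $P$ need not be connected even when $B$ is, and one must use the $H$-equivariance of $\widehat g$ to move the conclusion from one component of a fibre to the others. The first step (identifying $\widehat g$ with a fibrewise translation by an element of the effectiveness kernel $N$, hence trivial) is essentially a direct computation with conditions $(c_1)$–$(c_3)$ and is not hard once set up correctly.
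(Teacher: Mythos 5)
Your proposal is correct and follows essentially the same route as the paper: lift $g$ to the unique $\widehat g\in A^{H}(P,\beta)$, use effectiveness to force $\widehat g=\mathrm{id}$ on $p^{-1}(V)$, and then use the rigidity of the parallelism $\beta$ (an automorphism of a connected parallelizable manifold fixing a point is the identity) on each connected component of $P$, every one of which meets $p^{-1}(V)$ because its $p$-image is open and closed in the connected base $B$. The only slip is the aside that ``$P$ is therefore connected'' (false when $H$ is disconnected), but your alternative component-wise argument via $H$-equivariance, like the paper's observation that every component of $P$ is of the form $P_v$ with $v\in p^{-1}(V)$, repairs this.
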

\begin{proof} Suppose that there are $\gamma\in\Phi$ and an open subset
$U\subset B$ such that $\gamma|_U = id_U$. Then there exists a
unique $\Gamma\in Aut(\xi)$  lying over $\gamma$. Let $p:P\to B$ be
the projection of the $H$-bundle $P(B,H)$. Observe that any
connected component of $P$ is a connected component of some point
$v\in p^{-1}(U)$, i.~e. it may be represented in  the form $P_v$. The
effectiveness of the Cartan geometry $\eta$ implies $\Gamma|_{p^{-1}(U)} =
id_{p^{-1}(U)}$. Hence $\Gamma$ preserves each connected component
$P_v$ of $P$. Because $\Gamma$ is an isomorphism of the connected
parallelizable manifold $(P_v,\beta|_{P_v})$ and $\Gamma(v) = v$,
then it is necessary $\Gamma|_{P_v} = id_{P_v}$. Therefore $\Gamma =
id_P$ and $\gamma = id_B.$

Thus, the group $\Phi$ acts quasi-analytically on $B$.
\end{proof}

Suppose that a Cartan foliation $(M, F)$ modelled on the effective
Cartan geometry $\xi = (P(N,H),\omega)$ is covered by a fibration
$\widetilde{r}: \widetilde{M}\to B$, where $\widetilde{\kappa}:
\widetilde{M}\to M$ is the universal covering map. The fibration
$\widetilde{r}: \widetilde{M}\to B$ has connected fibres and simply
connected space $\widetilde{M}$. Therefore, due to the application of
the exact homotopic sequence for this fibration we obtain that the base
manifold $B$ is also simply connected.

For an arbitrary point $b\in B$ take $y\in\widetilde{r}^{-1}(b)$
and $x=\widetilde{\kappa}(y)$. Without loss generality we assume
that there is a neighbourhood $U_i$, $x\in U_i$, from the $N$-cocycle
$\{U_i,f_i,\{\gamma_{ij}\}\}_{i,j\in J}$ which defines $(M, F)$ and
a neighbourhood $\widetilde{U}_i$, $y\in\widetilde{U}_i$, such that
$\widetilde{\kappa}|_{\widetilde{U}_i}: \widetilde{U}_i\to U_i$ is a
diffeomorphism.

Let $\widetilde{V}_i: = \widetilde{r}(\widetilde{U}_i)$. Then there
exists a diffeomorphism $\phi: \widetilde{V}_i\to V_i$  satisfying
the equality $\phi\circ\widetilde{r}|_{\widetilde{U}_i} =
f_i\circ\widetilde{\kappa}|_{\widetilde{U}_i}$. The diffeomorphism
$\phi$ induces the Cartan geometry $\eta_{\widetilde{V}_i}$ on
$\widetilde{V}_i$ such that $\phi$ becomes the isomorphism
$(\widetilde{V}_i,\eta_{\widetilde{V}_i})$ and $(V_i,\xi_{V_i})$ in
the category $\frak C\frak a \frak r$ of Cartan geometries. The
direct check shows that by this way we define the Cartan geometry
$\eta$ on $B$, and $\eta|_{\widetilde{V}_i} =
\eta_{\widetilde{V}_i}$, $i\in J$.

Let us fix points $x_0\in M$ and
$y_0\in\widetilde{\kappa}^{-1}(x_0)\in\widetilde{M}$. Then the fundamental group
$\pi_1(M,x_0)$ acts on the universal covering space $\widetilde{M}$
as a deck transformation group $\widetilde{G}\cong\pi_1(M,x_0)$ of
$\widetilde{\kappa}$. Since $\widetilde{G}$ preserves the inducted
foliation $(\widetilde{M},\widetilde{F})$ formed by fibres of the
fibration $\widetilde{r}: \widetilde{M}\to B$, then every
$\widetilde{\psi}\in\widetilde{G}$ defines $\psi\in Diff(B)$
satisfying the relation $\widetilde{r}\circ\widetilde{\psi} =
\psi\circ\widetilde{r}$. The map $\chi: \widetilde{G}\to\Psi:
\widetilde{\psi}\to\psi$ is the group epimorphism. Observe that $\widetilde{G}$
is a subgroup of the automorphism group $A(\widetilde{M}, \widetilde{F})$ of
$(\widetilde{M}, \widetilde{F})$ in the category $\frak C\frak F$.
Therefore $\Psi$ is a subgroup of the automorphism group
$Aut(B,\eta)$ in the category  of Cartan
geometries $\frak C\frak a\frak r$. The kernel $ker(\chi)$ of $\chi$
determines the quotient manifold $\widehat{M}: =
\widetilde{M}/ker(\chi)$ with the quotient map
$\widehat{\kappa}: \widetilde{M}\to\widehat{M}$ and the quotient
group $\widehat{G}: = \widetilde{G}/ker(\chi)$ such that $M
\cong\widehat{M}/\widehat{G}$. The quotient map $\kappa: \widehat{M}\to M$ is
the required regular covering map, with $\widehat{G}$ acts on $\widehat{M}$ as
a deck transformation group. The map $\widehat{G}\to\Psi:
\widetilde{\psi}ker(\chi)\mapsto\chi(\widetilde{\psi})$, $\widetilde{\psi}\in\widetilde{G}$,
is a group isomorphism.

Remark that the induced foliation $(\widehat{M},\widehat{F})$, $\widehat{F} = \kappa^{*}F$,
is covered by a foliation $r: \widehat{M}\to B$ such that $\widetilde{r} = r\circ \widehat{\kappa}$.

Now the assertion $(3)$ of Theorem \ref{Th2} is easy proved with the application of Lemma~\ref{L1}.

\subsection{Suspended foliations}

Suspension foliation was introduced by A. Haefliger. Let $Q$ and $T$
 be smooth connected manifolds. Denote by $\rho : \pi_1 (Q, x)\to Diff(T)$ a group homomorphism.
Let $G:=\pi_1 (Q, x)$ and $\Phi:=\rho (G)$. Consider a universal covering map
$\widetilde{p} : \widetilde{Q} \to Q$. A right action of the group $G$ on product
of manifolds  $\widetilde{Q}\times T$ is defined as follows:
\begin{equation}
\Theta \, : \,\widetilde{Q}  \times T \times G\to \widetilde{Q}\times T
 : \, (x,t, g)\to (x\cdot g ,\, \rho (g^{-1})(t)),\nonumber
\end{equation}
where the covering transformation
$\widetilde{Q}\to \widetilde{Q}: x\to x\cdot g$ is induced by an element $g\in G$.

The quotient manifold $M: = (\widetilde{Q}  \times T)/G$ with the canonical projection
$$f_0: \widetilde{Q}\times T\to M ={(\widetilde{Q}\times T) \,/ G}$$ are determined.

 Let $\Theta_g:=\Theta |_{\widetilde{Q}\times\{t\}\times\{g\}}$. Since
$\Theta_g(\widetilde{Q}\times\{t\})=\widetilde{Q}\times\rho(g^{-1})(t)\,$ $\forall t\in T$,
then the action of the discrete group $G$ on $(\widetilde{Q}\times T)$
preserves the trivial foliation
$F:=\{\widetilde{Q}\times \{t\} \,|\, t\in T \}$ of the product
$\widetilde{Q}\times T$. Thus the projection $f_0: \widetilde{Q}\times
T\to M$ induced on the  $M$ of the
smooth foliation $F$. The pair $(M, F)$ is called {\it a suspended
foliation}  and is denoted by $Sus (T,Q,\rho)$. We accentuate that $(M,F)$
is covered  by the trivial fibration  $\widetilde{Q}\times T\to T.$

\subsection{Proof of Theorem \ref{Th3}}

Let $\eta$ be an effective Cartan geometry on a simply connected
manifold $B$ and $\Psi$ be any countable subgroup of the automorphism Lie group
$Aut(B,\eta)$. Therefore $\Psi$ has not more then countable generations
$\psi_{i}, \,\,i\in \mathbb{N}$. Denote by $\mathbb{R}^{2}$ the usual plane
and $C:=\{(n,0)\in\mathbb{R}^{2}\,|\,n\in\mathbb{N}\}$. Consider the set
$Q:=\mathbb{R}^{2}\setminus C$. Then the fundamental group $\pi_{1}(Q)$ of
$Q$ is the free group  with countable set of generators $\{\alpha_{i}\,|\,
i\in \mathbb{N}\}$. Define a group homo\-morphism $\rho:\pi_{1}(Q)\to Aut(B, \eta)$
by the following equalities on generations $\rho(\alpha_{i})=\psi_{i}$ for every $i\in \mathbb{N}$.

Then we construct the suspended foliation $(M, F):=Sus(B, Q, \rho)$ which is the
$2$-dimensional $(Aut(B,\eta),B)$-foliation covered by the trivial
fibration $\mathbb{R}^{2}\times B\to B.$ Because $Im(\rho)=\Psi$ the group $\Psi$
is the global holonomy group of the Cartan foliation $(M, F)$.

\subsection{Proof of Theorem \ref{ThN}} Let $(M, F)$ be a complete Cartan
foliation of type $(G,H)$ whose transverse curvature is equal to zero.
Then $(M, F)$  is modelled on the Cartan geometry $\xi_0 = (G(G/H,H),\beta_0)$
where $\beta_0$ is the Maurer--Cartan $\frak g$-valued $1$-form on the Lie group $G$.
Hence $(M, F)$ is $(Aut(\xi_0), G/H)$-foliation. According to (\cite{Min}, Proposition 3)
the completeness of $(M, F)$ implies the existence of an Ehresmann connection
for this foliation. As it is well known (\cite{ZhG}, Theorem 2), any
$(G,N)$-foliation with an Ehresmann connection is covered by a fibration.
Therefore, $(M, F)$ is the Cartan foliation covered by a fibration and all
statements of Theorem~\ref{Th2} are valid for it.

\section{The structure of basic automorphism groups of\\ foliations covered by fibrations}
\subsection{Properties of regular covering maps}
\begin{defn}
Let $f:M\rightarrow B$ be a submersion. It is said that $\widehat{h}\in Diff (M)$
lying over $h\in Diff (B)$ relatively $f$ if $h\circ f=f\circ \widehat{h}$.
\end{defn}

Let $\widetilde{\kappa}:\widetilde{{K}}\to K$  be the universal covering map, where $K$ and $\widetilde{{K}}$
are smooth manifolds. By analogy with Theorem 28.7 in \cite{Bus}, it is easy to show
that for any $h\in Diff(K)$ there exists $\widetilde{h}\in Diff(\widetilde{K})$
lying over $h$. It is well known that this fact is not true for
regular covering maps in general.
Proposition \ref{pr8.2} solves the problem of lifting of transformations relatively
arbitrary regular covering maps.

\begin{prop}\label{pr8.2}
Let ${\kappa}:\widehat{K}\to K$ be a smooth regular  covering map with
the deck transformation group $\Gamma$ and $\widetilde{\kappa}:\widetilde{K}\to K$
be the universal covering map with
the deck transformation group $\widetilde\Gamma$. Then
\begin{itemize}
\item[(1)] A diffeomorphism $\widehat{h}\in Diff(\widehat{K})$
lies over some diffeomorphism  $h\in Diff(K)$ if and only if it satisfies the equality
$\widehat{h}\circ\Gamma=\Gamma\circ \widehat{h}$.
\item[(2)] For  $h\in Diff(K)$ there exists $\widehat{h}\in Diff(\widehat{K})$
lying over $h$ if and only if there is $\widetilde h$ lying over $h$ relatively
$\widetilde{\kappa}:\widetilde{K}\to K$ such that
$\widetilde{h}\circ\widehat{\Gamma} = \widehat{\Gamma}\circ\widetilde{h}$ and
$\widetilde{h}\circ\widetilde {\Gamma}=\widetilde {\Gamma}\circ \widetilde{h}$,
where $\widehat{\Gamma}$ is the deck transformation group of the universal
covering map $\widehat{\kappa}:\widetilde{{K}}\to \widehat{K},$ with
$\widehat{\Gamma}$ is a normal subgroup of $\widetilde{\Gamma}$
\item[(3)] The set of all diffeomorphisms lying over $id_{K}$ relatively
${\kappa}:\widehat{K}\to K$ is coincided with the deck transformation group of
${\Gamma}\cong \widetilde{\Gamma}/\widehat\Gamma$.
\item[(4)] The subset of $h\in Diff(K)$ for which there exists a diffeomorphism
$\widehat{h}$ of $\widehat{K}$ lies over $h$ forms a group which is isomorphic
to the quotient group $N(\Gamma)/\Gamma$.
\item[(5)]  Let $G$ be a group of diffeomorphisms of the manifold $K$
such that for every $g\in G$ there exists $\widehat{g}\in
Diff(\widehat{K})$ lying over $g$ relatively $k:\widehat{K}\to K$.
Then the full group $\widehat{G}$ of $\widehat{g}\in Diff(\widehat{K})$ lying over
transformations from $G$ is isomorphic to the  quotient
group $\widehat{G}/\Gamma.$

\end{itemize}
\end{prop}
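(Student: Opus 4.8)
The plan is to reduce all five items to one elementary fact about regular covering maps: the fibre $\kappa^{-1}(\kappa(z))$ through a point $z\in\widehat K$ is exactly the orbit $\Gamma\cdot z$, so a diffeomorphism of $\widehat K$ descends along $\kappa$ precisely when it permutes these orbits, i.e.\ when it normalizes $\Gamma$ inside $\mathrm{Diff}(\widehat K)$. For item $(1)$: if $\widehat h$ lies over $h$, then for $\gamma\in\Gamma$ we get $\kappa\circ(\widehat h\gamma\widehat h^{-1}) = h\circ\kappa\circ\gamma\circ\widehat h^{-1} = h\circ\kappa\circ\widehat h^{-1} = \kappa$, so $\widehat h\gamma\widehat h^{-1}\in\Gamma$; applying the same to $\widehat h^{-1}$ yields $\widehat h\circ\Gamma=\Gamma\circ\widehat h$. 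Conversely, if $\widehat h\circ\Gamma=\Gamma\circ\widehat h$, then $\widehat h$ carries fibres of $\kappa$ to fibres, hence descends to a bijection $h$ of $K$ with $\kappa\circ\widehat h=h\circ\kappa$; since $\kappa$ is a local diffeomorphism and $\widehat h$ a diffeomorphism, $h\in\mathrm{Diff}(K)$, with $h^{-1}$ induced by $\widehat h^{-1}$.

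Granting $(1)$, the remaining items follow by considering the \emph{descent homomorphism} $\Delta\colon N(\Gamma)\to\mathrm{Diff}(K)$, defined on the normalizer $N(\Gamma)$ of $\Gamma$ in $\mathrm{Diff}(\widehat K)$, which sends $\widehat h$ to the unique $h$ it covers (uniqueness from surjectivity of $\kappa$); it is a homomorphism because $\kappa\circ\widehat h_1\circ\widehat h_2 = h_1\circ h_2\circ\kappa$. Item $(3)$ is the remark that $\widehat h$ lies over $\mathrm{id}_K$ iff $\kappa\circ\widehat h=\kappa$, i.e.\ iff $\widehat h\in\Gamma$, together with the standard tower isomorphism $\Gamma\cong\widetilde\Gamma/\widehat\Gamma$ for $\widetilde K\xrightarrow{\widehat\kappa}\widehat K\xrightarrow{\kappa}K$. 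By $(1)$ the image of $\Delta$ is exactly the set of $h\in\mathrm{Diff}(K)$ admitting a lift, and by $(3)$ the kernel of $\Delta$ is $\Gamma$; hence that set is a subgroup of $\mathrm{Diff}(K)$ isomorphic to $N(\Gamma)/\Gamma$, which is $(4)$. For $(5)$, the full preimage $\widehat G:=\Delta^{-1}(G)$ is a subgroup of $N(\Gamma)$, $\Delta|_{\widehat G}\colon\widehat G\to G$ is onto (by the hypothesis on $G$) with kernel $\Gamma$, so $\Gamma\triangleleft\widehat G$ and $\widehat G/\Gamma\cong G$.

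The step requiring the most care is $(2)$, where the three covers interact. Given $h\in\mathrm{Diff}(K)$, the lifting criterion (using that $\widetilde K$ is simply connected) produces $\widetilde h\in\mathrm{Diff}(\widetilde K)$ with $\widetilde\kappa\circ\widetilde h=h\circ\widetilde\kappa$, and by $(1)$ applied to $\widetilde\kappa$ such a $\widetilde h$ automatically normalizes $\widetilde\Gamma$; moreover, since $\widehat\Gamma\triangleleft\widetilde\Gamma$ and any two lifts of $h$ differ by an element of $\widetilde\Gamma$, the extra condition $\widetilde h\circ\widehat\Gamma=\widehat\Gamma\circ\widetilde h$ is independent of the chosen lift. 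Now apply $(1)$ to the universal covering $\widehat\kappa\colon\widetilde K\to\widehat K$: that condition holds iff $\widetilde h$ descends to some $\widehat h\in\mathrm{Diff}(\widehat K)$ with $\widehat\kappa\circ\widetilde h=\widehat h\circ\widehat\kappa$, and then $\kappa\circ\widehat h=h\circ\kappa$ follows from $\kappa\circ\widehat\kappa=\widetilde\kappa$ and surjectivity of $\widehat\kappa$. Conversely, if $\widehat h\in\mathrm{Diff}(\widehat K)$ lies over $h$, lifting $\widehat h$ through $\widehat\kappa$ yields a $\widetilde h$ which, by $(1)$, normalizes $\widehat\Gamma$ and satisfies $\widetilde\kappa\circ\widetilde h=h\circ\widetilde\kappa$, closing the equivalence. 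The only genuine subtlety is the bookkeeping of the interplay $\widehat\Gamma\triangleleft\widetilde\Gamma$, $\Gamma\cong\widetilde\Gamma/\widehat\Gamma$, and keeping the various lifts coherent across the tower.
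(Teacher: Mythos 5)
Your proof is correct and follows essentially the same route as the paper: the conjugation computation $\kappa\circ(\widehat h\gamma\widehat h^{-1})=\kappa$ for item $(1)$, the tower $\widetilde K\to\widehat K\to K$ with $\kappa\circ\widehat\kappa=\widetilde\kappa$ for item $(2)$, and the observation that the liftable diffeomorphisms form the image of a homomorphism with kernel $\Gamma$ for items $(3)$--$(5)$. If anything, your write-up is more complete, since you explicitly prove the converse of $(1)$ (normalizing $\Gamma$ implies descending, via fibres $=$ $\Gamma$-orbits), which the paper leaves implicit but which is genuinely needed for $(4)$ and for the converse direction of $(2)$.
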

\begin{proof} $(1)$ Let $\widehat{h}\in Diff(\widehat{K})$ lies over $h$ relatively
$\kappa:\widehat{K}\to K,$ i.e. $h\circ\kappa = \kappa\circ\widehat{h}.$
Then there exists $\widehat{h}^{-1}\in Diff(\widehat{K})$ and
$h^{-1}\circ\kappa = \kappa\circ\widehat{h}^{-1},$ i.e., $\widehat{h}^{-1}$
lies over $h^{-1}$ relatively $\kappa$.

Let $\gamma$ be any element from $\Gamma$, then, according to the assumption $(2)$,
$\kappa\circ\gamma = \kappa$. Using this we get the following chain of equalities
$\kappa\circ(\widehat{h}\circ\gamma\circ\widehat{h}^{-1}) =
(\kappa\circ\widehat{h})\circ\gamma\circ\widehat{h}^{-1} =
 (h\circ\kappa)\circ\gamma\circ\widehat{h}^{-1} = h\circ(\kappa\circ\gamma)\circ\widehat{h}^{-1} =
h\circ\kappa\circ\widehat{h}^{-1} = (h\circ\kappa)\circ\widehat{h}^{-1} =
(\kappa\circ\widehat{h})\circ\widehat{h}^{-1} = \kappa.$ Due to $(2)$
this implies $\widehat{h}\circ\gamma\circ\widehat{h}^{-1}=\gamma'\in\Gamma$
and $\widehat{h}\circ\gamma = \gamma'\circ\widehat{h}.$ Therefore
$\widehat{h}\circ\Gamma\subset\Gamma\circ\widehat{h}.$ Analogously,
$\Gamma\circ\widehat{h}\subset\widehat{h}\circ\Gamma.$ Thus,
$\Gamma\circ\widehat{h} = \widehat{h}\circ\Gamma.$

$(2)$ As $\kappa: \widehat{K}\to K$ is a regular  covering map, $\widehat{\Gamma}$
is a normal subgroup of $\widetilde{\Gamma}$. Suppose that for $g\in Diff(K)$ there exists
$\widehat{g}\in Diff(\widehat{K})$  lying over $g$. Consider the universal  covering map
$\widehat{\kappa}:\widetilde{K}\to \widehat{K}$. It is well known that there
is  the universal covering  map $\widetilde{\kappa}:\widetilde{K}\to K$ satisfying
the equality $\kappa\circ \widehat{\kappa}=\widetilde{\kappa}$. Hence there exists
$\widetilde{g}\in Diff(\widetilde{K})$ over $\widehat{g}$ relatively $\widehat{\kappa}$.
Note that $\widetilde{g}$ also lies over $g$ relatively $\widetilde{\kappa}$.
Therefore, according to the proved above statement $(1)$, $\widetilde{g}$ satisfies
both equalities $\widetilde{g}\circ \widehat{\Gamma} =
\widehat{\Gamma}\circ\widetilde{g}$ and $\widetilde{g} \circ
\widetilde{\Gamma} = \widetilde{\Gamma}\circ\widetilde{g} $.

Converse. Suppose that for $h\in Diff(K)$ there exists $\widetilde h\in Diff(\widetilde{K})$
satisfying the equalities: $\widehat{k}\circ\widetilde{h}=\widehat{h}\circ \widehat{k},$
$\widetilde{h}\circ\widetilde{\Gamma} =
\widetilde{\Gamma}\circ\widetilde{h}$ and $\widetilde{h}\circ
\widehat{\Gamma} = \widehat{\Gamma}\circ\widetilde{h}$. Then, according to the
assertion $(1)$, there is $\widehat{h}\in Diff(\widehat{K})$ such that
$\widehat{k}\circ\widehat{h}=\widetilde{h}\circ \widehat{k}.$ Therefore,
applying  the equality $\kappa\circ \widehat{\kappa}=
\widetilde{\kappa}$, for each $ \widehat{x}\in\widehat{K}$, we get
the chain of equalities
\begin{equation}
(\kappa\circ \widehat{h})(\widehat{x})=
\kappa\circ(\widehat{\kappa}\circ  \widetilde{h})(\widehat{\kappa}^{-1}(\widehat{x}))=
((\kappa\circ \widehat{\kappa})\circ\widetilde{h})(\widehat{\kappa}^{-1}(\widehat{x}))=
(\widetilde{\kappa}\circ \widetilde{h}) (\widehat{\kappa}^{-1}(\widehat{x}))=\nonumber\\
\end{equation}
\begin{equation}
=(h\circ \widetilde{\kappa}) (\widehat{\kappa}^{-1}(\widehat{x}))=
(h\circ(\kappa\circ \widehat{\kappa}))(\widehat{\kappa}^{-1}(\widehat{x}))
=(h\circ \kappa)(\widehat{\kappa}(\widehat{\kappa}^{-1}(\widehat{x})))
 =(h\circ \kappa)(\widehat{x}).\nonumber
\end{equation}
Hence, $\kappa\circ \widehat{h}=h\circ \kappa,$ i.e. $\widehat{h}$ lies over $h$ relatively $\kappa$.

The statement $(3)$ is obvious.

The assertion $(4)$ is a corollary of $(1)$ and $(3)$.

$(5)$ Let $G$ be the group of projections of $\widehat{G}$  and
$f:\widehat{G}\to G:\widehat{h}\mapsto h $, where $h$ is the projection
of $\widehat{h}$, is  a group epimorphism, since $f$ is surjective by
the condition. In according with the previous statement $Ker(f) = {\Gamma}$ and
$G\cong\widehat{G}/{\Gamma}.$
\end{proof}

\subsection{Proof of Theorem \ref{Th5}}
Suppose that a Cartan foliation $(M, F)$ is covered by fibration. By definition
$\ref{opr1}$ the induced  foliation  $(\widetilde{M}, \widetilde{F})$
on the space of the universal  covering $\widetilde{\kappa}:\widetilde{M}\to M$
is defined by  a locally trivial fibration $\widetilde{r}:\widetilde{M}\to B$.
Due to Theorem \ref{Th2} the regular covering map $\kappa:\widehat{M}\rightarrow M$
and locally trivial fibration $r: \widehat{M} \rightarrow B$ are defined, where $B$
is a simply connected manifold with the inducted Cartan geometry $\eta$. Let $\Psi$
be the global holonomy group of $(M, F)$, then and $\Psi$ is isomorphic to the
deck transformations group $G$ of $\kappa:\widehat{M}\rightarrow M$. Since the
manifold $\widetilde{M}$ is simply  connected, then there exists the universal
covering map $\widehat{\kappa}:\widetilde{M}\to \widehat{ M}$ satisfying the equality
$\kappa\circ \widehat{\kappa}=\widetilde{\kappa}$.  Let $\widetilde{G}$, $G$
and $\widehat{G}$  be the deck transformation groups  of the covering maps
$\widetilde{\kappa}$,  $\kappa$ and $\widehat{\kappa}$ relatively,
with $\Psi\cong G\cong\widetilde{G}/\widehat{G}$.

Let  us consider  the following preimages of the $H$-bundle ${\mathcal {R}}$ relatively
$\widetilde{\kappa}$ and  $\kappa$
\begin{equation}
\widetilde{\mathcal {R}}:=\{(\widetilde{x},u)\in \widetilde{M}\times {\mathcal {R}}\,|\,\widetilde{\kappa}(\widetilde{x})=\pi(u)\}=\widetilde{\kappa}^{*}{\mathcal {R}}\,\,\, and \nonumber
\end{equation}
\begin{equation}
\widehat{\mathcal {R}}:=\{(\widehat{x},u)\in \widehat{M}\times {\mathcal {R}}\,|
\,\kappa(\widehat{x})=\pi(u)\}=\kappa^{*}{\mathcal {R}}. \nonumber
\end{equation}

Remark that the maps
\begin{equation}
\widetilde{\theta}:\widetilde{{\mathcal {R}}}\to {\mathcal {R}}:(\widetilde{x}, u)\mapsto (\widetilde{\kappa}(\widetilde{x}), u)\,\,\,\,\, \forall(\widetilde{x}, u)\in \widetilde{{\mathcal {R}}},\nonumber
\end{equation}
\begin{equation}
\theta:\widehat{{\mathcal {R}}} \to {\mathcal {R}}:(\widehat{x}, u)\mapsto (\kappa(\widehat{x}),u) \,\,\,\,\, \forall(\widehat{x}, u)\in \widehat{{\mathcal {R}}},\nonumber\\
\end{equation}
\begin{equation}
\widehat{\theta}:\widetilde{{\mathcal {R}}}\to \widehat{{\mathcal {R}}}:(\widetilde{x}, u)\mapsto (\widehat{\kappa}(\widetilde{x}), u)\,\,\,\,\, \forall(\widetilde{x}, u)\in \widetilde{{\mathcal {R}}},\nonumber
\end{equation}
are regular covering maps  with the deck transformation groups
$\widetilde{\Gamma}$, $\Gamma$  and $\widehat{\Gamma}$, relatively,
which are isomorphic to the relevant groups $\widetilde{G}$, $G$ and
$\widehat{G}$, i.e. $\widetilde{\Gamma}\cong \widetilde{G}$,
$\Gamma\cong G$ and $\widehat{\Gamma}\cong \widehat{G}$.

Let $(\widetilde{\mathcal {R}}, \widetilde{\mathcal {F}})$ and
$(\widehat{\mathcal {R}},\widehat{\mathcal {F}})$ be the corresponding lifted
foliations. Since $(\widetilde{M}, \widetilde{F})$ and $(\widehat{M}, \widehat{F})$
are simple foliations, then $(\widetilde{\mathcal {R}},\widetilde{\mathcal {F}})$ and
$(\widehat{\mathcal {R}},\widehat{\mathcal {F}})$ are also simple foliations,
which are formed by locally trivial fibrations
$\widetilde{\pi}_{b}~:~\widetilde{\mathcal {R}}~\to~\widetilde{W}$ and
$\widehat{\pi}_{b}~:~\widehat{\mathcal {R}}~\to~\widehat{W}$. Hence
$\mathfrak{g}_{0}~(\widetilde{\mathcal {R}}, \widetilde{\mathcal {F}})~=~0$,
$\mathfrak{g}_{0}(\widehat{\mathcal {R}},\widehat{\mathcal {F}})=0$, and
$\widetilde{W}=\widetilde{\mathcal {R}}/ \widetilde{\mathcal {F}}$,
$\widehat{W}=\widehat{\mathcal {R}}/\widehat{\mathcal {F}}$ are manifolds.

Since the fibrations $\widetilde{r}:\widetilde{M}\to B$ and $r:\widehat{M}\to B$
 have the same base $B$,  each leaf of the foliation $(\widetilde{M}, \widetilde{F})$
is invariant relatively the group $\widehat{G}$, i.e.
$\widehat{G}\subset A_{L}(\widetilde{M}, \widetilde{F}).$ Therefore
$\widehat{\Gamma}\subset A_{L}(\widetilde{\mathcal {R}},\widetilde{\mathcal {F}})$
and the leaf spaces $\widetilde{\mathcal {R}}/ \widetilde{\mathcal {F}}=\widetilde{W}$
and $\widehat{\mathcal {R}}/\widehat{\mathcal {F}}= \widehat{W}$ are coincided, i.e.
$\widetilde{W}=\widehat{W}.$ Consequently basic automorphism groups
$A_{B}(\widetilde{\mathcal {R}}, \widetilde{\mathcal {F}})$ and
$A_{B}(\widehat{\mathcal {R}},\widehat{\mathcal {F}})$ may be identified.
Further we put $A_{B}(\widetilde{\mathcal {R}}, \widetilde{\mathcal {F}})=
A_{B}(\widehat{\mathcal {R}},\widehat{\mathcal {F}}).$

According to the conditions of Theorem \ref{Th5}, $\Psi$ is a
discrete subgroup of the Lie group $Aut(B,\eta)$. Let $N(\Psi)$ be the
normalizer of $\Psi$ in the Lie group $Aut(B,\eta)\cong A^H(W, \widetilde{\beta})$.
Hence, $N(\Psi)$ is a closed Lie subgroup of the Lie group $Aut(B,\eta)$
and the quotient group $N(\Psi)/ \Psi$ is also a Lie group.

Let $\pi:{\mathcal {R}}\to M$ be the projection of the foliated
bundle over $(M, F)$. Due to Theorem~\ref{Th4} the discreteness of
the global holonomy group $\Psi $ implies that the structural Lie
algebra ${\frak g}_{0}$ of the Cartan foliations $(M, F)$ is zero.
Therefore the lifted  foliation $({\mathcal {R}}, {\mathcal {F}})$ is formed by
fibres of the basic fibration $\pi_{b}:{\mathcal {R}}\to W.$

Observe that there exists a map $\tau:\widehat{W}\rightarrow W$ satisfying
the equality $\tau\circ\widehat{\pi}_{b}=\theta \circ\pi_{b}$.
It is easy to show that $\tau:\widehat{W}\to W$ is a regular covering map with
the deck transformations group  ${\Phi}$, ${\Phi}\subset A^{H}(\widehat{W},\widehat{\beta})$,
which is naturally isomorphic to the groups $\Psi$, $G$ and $\Gamma$.

Denote by $\eta=(P(B,H),\omega)$  the Cartan geometry with the
projection $p:P\to B$ onto $B$ determined in the proof of
Theorem \ref{Th2}. Remark that $\widehat{W}={P}$ is the space of the
$H$-bundle of the Cartan geometry $\eta$.

Since $\kappa:\widehat{M}\to M$ and $\pi: {\mathcal {R}}\to M$ are morphisms of
the following foliations $\kappa:(\widehat{M},\widehat{F})\to (M,F)$ and
$\theta:(\widehat{\mathcal {R}},\widehat{\mathcal {F}})\to ({\mathcal {R}},{\mathcal {F}})$ in the category
of the foliations $\frak F\frak o\frak l$, then maps $\widehat{\tau}:B\to M/F$
and $s:W\to W/H \cong M/F$ are defined, and the following diagram

\begin{equation}
\label{D}
\begin{CD}
 \xymatrix{
       P=\widehat{W}\ar@{->}[ddd]_{p}\ar@{->}[rrrr]^{\tau}&&&& W\ar@{->}[ddd]^{s}\\
       & \widetilde{\kappa}^{*}{\mathcal R}=\widetilde{\mathcal R}\ar@{->}[ul]^{\widetilde{\pi}_{B}}\ar@{->}[d]_{\widetilde{\pi}}\ar@{->}[r]_{\widehat{\theta}}&\kappa^{*}{\mathcal{R}}=\widehat{\mathcal{R}} \ar@{->}[ull]_{\widehat{\pi}_{B}}\ar@{->}[r]_{\theta}\ar@{->}[d]_{\widehat{\pi}}&{\mathcal {R}}\ar@{->}[ur]^{{\pi}_{B}}\ar@{->}[d]^{\pi}& \\
 & \widetilde{ M}\ar@{->}[ld]_{\widetilde{r}}\ar@{->}[r]^{\widehat{\kappa}} &\widehat{M}\ar@{->}[dll]_{r}\ar@{->}[r]^{\kappa}&M\ar@{->}[dr]^{q}&\\
    \widehat{M}/\widehat{F}= B\ar@{->}[rrrr]^{\widehat{\tau}}&&&& M/F  }\nonumber
\end{CD}
\end{equation}
is commutative.

Due to Proposition \ref{pr5} there are the Lie group isomorphisms
$$\varepsilon: A_{B}(M, F)\to Im(\varepsilon)\subset A^{H}(W,\widetilde{\beta})\,\, and\,\,$$
$$\widehat{\varepsilon}:A_{B}(\widetilde{M},\widetilde{F})=A_{B}(\widehat{M},
\widehat{F})\to Im(\widehat{\varepsilon})\subset A^{H}(\widehat{W}, \widehat{\beta}).$$

Let us define a map $\Theta: Im(\varepsilon)\to N(\Phi)/\Phi$ by the
following a way. Take any  $h\in Im(\varepsilon)\subset
A^{H}(W,\widetilde{\beta})$. Denote the element
$\varepsilon^{-1}(h)\in A_{B}(M, F) $ by $f\cdot A_{L}(M, F)\in
A_{B}(M, F)$, where $f\in A(M, F)$. Since $\widetilde{\kappa}:\widetilde{M}\to M$
is the universal covering map there exists $\widetilde{f}\in Diff(\widetilde{M})$ 
lying over $f$ relatively $\widetilde{\kappa}$. It not difficult to see tha
$\widetilde{f}\in A(\widetilde{M},\widetilde{F})$. Hence $\widetilde{f}\circ
A_{L}(\widetilde{M}, \widetilde{F})\in A_{B}(\widetilde{M},
\widetilde{F})$. Consider $\widehat{h}:=\widehat{\varepsilon}(\widetilde{f}\cdot
A_{L}(\widetilde{M}, \widetilde{F}))\in Im(\widehat{\varepsilon})\subset
A^{H}(\widehat{W},\widehat{\beta})$. The direct check shows that
$\widehat{h}$ lies over $h$ relatively $\tau$. Remind that $\Phi$ is
the deck transformation group  of the covering map
$\tau:\widehat{W}\to W$. Applying the statement $(1)$ of
Proposition~$\ref{pr8.2}$ we get that $\widehat{h}\in N(\Phi)$ and
the set of all automorphisms in $Im(\widehat{\varepsilon})$ lying
over $h$ is equal to the set of transformations from the class
$\widehat{h}\cdot\Phi.$ Let us put $\Theta(h):=
\widehat{h}\cdot\Phi\in N(\Phi)/\Phi.$ It is easy to check that the
map $\Theta: Im(\varepsilon)\to N(\Phi)/\Phi$ is a group
monomorphism.

The effectiveness of the Cartan geometry $\eta=(P(B,H),\omega)$ on
$B,$ where $P=\widehat{W},$ implies the existence of the Lie group
isomorphism $\sigma:A^{H}(\widehat{W},\widehat{\beta})\rightarrow
Aut(B,\eta)$ (see Remark~\ref{r4}). Observe that
$\sigma({\Phi})=\Psi$ and $\sigma( N(\widetilde{\Phi}))=N(\Psi)$,
hence there exists the inducted Lie group isomorphism
$\widetilde{\sigma}: N({\Phi})/{\Phi}\to N(\Psi)/\Psi$. Thus, the
composition of the Lie group monomorphisms
\begin{eqnarray}
\delta:=\widetilde{\sigma}\circ {\Theta}\circ\varepsilon:A_{B}(M,F)\rightarrow N(\Psi)/\Psi\nonumber
\end{eqnarray}
is the required Lie group monomorphism. Due to uniqueness  of the Lie group
structure in $A_{B}(M, F)$, in conforming with the proof of Theorem \ref{Th1},
the image $Im(\delta)$ is an open-closed subgroup of the Lie group $N({\Psi})/{\Psi}$.

\subsection{Proof of Theorem \ref{Th7}} 1. In accordance with the condition of
Theorem \ref{Th7}, $(M,F)$ is an $\frak M$-complete Cartan foliation,
and the distribution $\frak M$ is integrable. In this case, there is $q$-dimensional
foliation $(M,F^t)$ such that $TF^t=\frak M.$ Let $\widetilde{\kappa}: \widetilde{M}\to M$
be the universal covering map. As is known (\cite{Min}, Proposition 2), $\frak M$
is an integrable Ehresmann connection for the foliation  $(M,F)$.

In this case, according to the decomposition theorem belonging to
S.~Ka\-shi\-wabara \cite{Kas}, the universal covering manifold has
the form $\widetilde{M} = \widetilde{Q}\times B$, and the lifted
foliations are
$\widetilde{F}=\widetilde{\kappa}^*F=\{\widetilde{Q}\times\{y\}\,|\,
y\in B\}$, $\widetilde{F}^t=\widetilde{\kappa}^*F^t=\{\{z\}\times
B\,|\, z\in \widetilde{Q}\}$. Hence $(M,F)$ is covered by fibration
$\widetilde{r}: \widetilde{Q}\times B\to B,$ where $B$ is a simply
connected manifold. In this case, by the same way as in the proof of
Theorem \ref{Th2}, the Cartan geometry $\eta$ is induced on $B$ such
that $(M,F)$ becomes $(Aut(B,\eta),B)$-foliation.

2. Let $\Psi$ be the global holonomy group of this foliation.
Suppose now that the normalizer $N(\Psi)$ is equal to the
centralizer $Z(\Psi)$ of $\Psi$ in the group $Aut(B,\eta)$.

Let us fix points $x_0\in M$ and $(z_0,y_0)\in\widetilde{\kappa}^{-1}(x_0)\in\widetilde{M}$.
Then the fundamental group $\pi_1(M,x_0)$ acts on the universal covering space
$\widetilde{M} = \widetilde{Q}\times B$ as the deck transformation group
$\widetilde{G}\cong\pi_1(M,x_0)$ of $\widetilde{\kappa}$. Since $\widetilde{G}$
preserves both the inducted foliations $(\widetilde{M},\widetilde{F})$ and
$(\widetilde{M},\widetilde{F}^t)$, then every
$\widetilde{g}\in\widetilde{G}$ may be written in the form $\widetilde{g}=(\psi^t,\psi)$,
where $\psi^t$ forms a subgroup $\Psi^t$ in $Diff(\widetilde{Q})$,
$\psi\in\Psi$, and $\widetilde{g}(z,y)=(\psi^t(z),\psi(y))$,
$(z,y)\in\widetilde{Q}\times B.$  The maps $\widetilde{\chi}:
\widetilde{G}\to\Psi: \widetilde{g}\to\psi$ and $\widetilde{\chi}^t: \widetilde{G}\to\Psi^t:
\widetilde{g}\to\psi^t$ are the group epimorphisms.

Let $h$ be any element from
$N(\Psi)/\Psi$. Since $N(\Psi) = Z(\Psi)$, we have the following chain of equalities
$$\widetilde{g}\circ (id_{\widetilde{Q}}, h) =({\psi}^t,\psi)\circ
(id_{\widetilde{Q}}, h)=({\psi}^{t} \circ id_{\widetilde{Q}}, \psi \circ h)=\\$$
$$(id_{\widetilde{Q}} \circ {\psi}^{t}, h\circ \psi )= (id_{\widetilde{Q}}, h)\circ
({\psi}^t,\psi) = (id_{\widetilde{Q}}, h)\circ \widetilde{g}$$
for any $\widetilde{g}= ({\psi}^t,\psi)\in\widetilde{G}$, i.e.
$\widetilde{G}\circ(id_{\widetilde{Q}},h)=(id_{\widetilde{Q}},h)\circ\widetilde{G}$.
Therefore, by the state\-ment~$(1)$ of Proposition \ref{pr8.2} for the deck transformation
group $\widetilde{G}$, there exists $\widetilde{h}\in Diff(M)$
such that $(id_{\widetilde{Q}},h)$ lies over $\widetilde h$ relatively to
$\widetilde{\kappa}:\widetilde{M}\to M$.

Using $(id_{\widehat{Q}},h)\in A(\widetilde{M},\widetilde{F})$ it is not difficult
to check that $\widetilde{h}\in A(M,F)$. Hence, $\varepsilon(\widetilde{h}\cdot A_{L}(M,F) )= h.$
This means that $\varepsilon: A_B(M,F)\to N(\Psi)/\Psi$ is surjective.
Thus, $\varepsilon$ is the group isomorphism.
\section{Examples of the calculation of the basic\\ auto\-mor\-phisms groups}\label{ss_5.11}
\begin{defn} Let $\xi=(P(N,H),\omega)$ is arbitrary Cartan geometry of the type
$(G,H)$, of the effectiveness of which is not assumed. The group
$$\mathrm{Gauge}(\xi):=\{\Gamma\in{ A}(\xi)\mid p\circ\Gamma=p\}$$
is called {\it of the gauge transformation group of the Cartan geometry} $\xi.$
\end{defn}

\begin{ex}
 \label{E1}
\label{Ex1} Let $G$ be a Lie group and  $H$ be a closed subgroup of $G$.
Denote  by  $\mathfrak{g}$ and $\mathfrak{h}$
 the Lie algebras of Lie  groups $G$ and $H$ relatively. Suppose that the
kernel of the  pair of Lie groups  $(G,H)$ is equal to the  intersection
$K=Z(G)\cap Z(H)$ of the centers of the groups
 $G$ and $H$. Denote by $\omega_{G}$ the Maurer-Cartan $\mathfrak{g}$-valued
$1$-form on the Lie group $G.$ Then $\xi^{0}=(G(G/H,H),\omega_{G})$
is the Cartan geometry, and its transverse curvature is zero. Consider any
smooth manifold $L$ .  Denote by $M$ the product of
manifolds $M=L\times (G/H),$ and ${F}=\{L\times\{x\}\,|\, x\in G/H\}.$
Then $(M,{ F})$ is the trivial transverse  homogeneous foliation
with the transverse Cartan geometry $\xi^0.$
Because the foliation $(M,{ F})$ is trivial, the group  ${ A}_B(M,{ F})$ is  coincided
with the  automorphisms group  $Aut(\xi^{0})$ of the Cartan
geometry $\xi^0$ in the category $\frak C \frak a \frak r$.

Any left action $L_g$, $g\in G$, of the Lie group $G$ satisfies the
conditions: $L_g^*\omega_{G}=\omega_{G}$ and $L_g\circ R_a=R_a\circ
L_g\,$ $\forall a\in G.$ Therefore, $L_g\in Aut(\xi^{0})$ and
$\dim(Aut(\xi^{0})) = \dim(G)= \dim(\frak g).$ By assumption,
the kernel of the pair $(G,H)$ equal to  $K=Z(G)\cap Z(H)$,  hence
$Gauge(\xi^0)=\{L_b\,|\, b\in~K\}$. Thus, the basic automorphisms
group  ${ A}_B(M,{ F})$ is equal  to the quotient $Aut(\xi^{0})/Gauge(\xi^0) \cong G/K$,
and $\dim({ A}_B(M,{ F})) = \dim(\frak g) - \dim(\frak k),$
where $\frak k$ is the algebra Lie of the kernel  $K$.
\end{ex}

Example \ref{E1} shows  that the estimation \eqref{oz1}
of  the dimension group ${ Aut}_B(M, {F})$ in
Theorem~\ref{Th1} is exact.

\begin{ex}\label{E2}
Let $$\left\lbrace
G:=\begin{pmatrix}
1 & x & y\\
0 & 1 & x\\
0 & 0 & 1\\
\end{pmatrix}
\,|\,x,y\in\mathbbm{ R}^1
\right\rbrace, \,\,\, 
K:=\left\lbrace \begin{pmatrix}
1 & 0 & y\\
0 & 1 & 0\\
0 & 0 & 1\\
\end{pmatrix}
\,|\, y \in\mathbbm{ R}^1
\right\rbrace.
 $$

Then $G$ is an Abelian Lie group and  $K$ is a connected closed subgroup of the
group Lie $G$. Hence $K=Z(G)\cap~Z(K)$. Let
$\xi^0=(G(G/K,K),\omega_G)$ is canonical Cartan geometry with
projection $p: G\to G/K.$

The  foliation $(G, {F})$, where $F=\{gK\,|\, g\in G\}$, is a proper
Cartan foliation with transverse Cartan geometry $\xi^0$. Using
Proposition \ref{pr5.2} we see that $\mathcal{R}=G$ and $H=\{e\}$.
Therefore the basic automorphisms group $A_{B}(M, F)$ is isomorphic
to  the Lie  group $Aut(G/K,\xi^{0})\cong G/K$. Thus $A_{B}(M,
F)\cong G/K.$

Since $K=H$, then
$\dim (A_{B}(M, F))=\dim (G) -\dim {(K)}=\dim {\mathfrak{g}}-\dim {\mathfrak{k}}$
and the estimation \eqref{oz1} of Theorem \ref{Th1} is exact.
\end{ex}

\begin{ex}\label{E3} Let $G$ be the similar group of the Euclidean space
$\mathbb{E}^q,$ $q\ge 1$. Then $G =
CO(q)\ltimes\mathbb{R}^q$ is the semidirect product of
the conformal group $CO(q)$ and the group
$\mathbb{R}^q.$ Let $H = CO(q)$ and $p\colon G\to
G/H=\mathbb{E}^q$ be the canonical principal $H$-bundle. Let
$\frak g$ be the Lie algebra of the Lie group $G,$ and $\omega_{G}$ be
the Maurer-Cartan $\frak g$-valued $1$-form on $G.$ Then
$\xi=(G(\mathbb{E}^q,H),\omega_{G})$ is an effective Cartan geometry.
Foliations with this transverse geometry $(\mathbb{E}^q,\xi)$ are
called {\it transversally similar foliations}~\cite{Min}.

Let $Q$ be a smooth $p$-dimensional manifold whose fundamental group
$\pi_1(Q,x)$ contains an element $\alpha$ of infinite order. For an
arbitrary natural number $q\ge 1,$ denote by $\mathbb{E}^q$ the
$q$-dimensional Euclidean space. 

Define a group homomorphism
$\rho\colon\pi_1(Q,x)\to{ Diff}(\mathbb{E}^q)$ by setting
$\rho(\alpha)=\psi,$ where $\psi$ is the homothety transformation of
the Euclidean space $\mathbb{E}^q$ with the coefficient $\lambda\neq
1,$ i.~e. $\psi(x)=\lambda x\,$ $\forall x\in\mathbb{E}^q,$ and
$\rho(\beta)=\mathrm{id}_{\mathbb{E}^q}$ for any element
$\beta\in\pi_1(Q,x)$ such that $\beta\neq \alpha^k$ with every
integer $k.$ Then $(M,F)=\mathrm{Sus}({\mathbb E}^q,Q,\rho)$ is a
proper transversally similar foliation with a unique closed leaf
diffeomorphic to the manifold $Q.$

Due to $N(\Psi)=Z(\Psi)$, according to Theorem \ref{Th7}, we get
${ A}_B(M,F)\cong N(\Psi)/\Psi.$ The foliation  $(M, F)$
is covered by the fibration $\widetilde{Q}\times \mathbb{E}^{q}\to \mathbb{E}^{q}$
where $\widetilde{Q}\to Q$ is the universal covering map.
Hence $\Psi:=\rho(\pi_{1}(Q, x))\cong \mathbb{Z}$ is the global holonomy group
of $(M, F)$ and $K=H$ is the kernel of the pair $(G,H)$. Thus the assumption
of which was made in Example $\ref{E1}$ is realized.

In our case $\Psi=\langle\psi\rangle$ and
$N(\Psi)=CO(q)=\mathbb{R}^+\cdot O(q),$ therefore ${A}_B(M,F)\cong
U(1)\times O(q),$ where $U(1)\cong\mathbb{R}^+/\Psi$ is the compact
$1$-dimensional Abelian group.

If $q=1,$ then $O(q)=\mathbb{Z}_2$ and ${ A}_B(M,F)\cong
U(1)\times \mathbb{Z}_2.$

Thus $\dim (A_{B}(M, F))=\dim {\mathfrak{h}}$ and the estimate~\eqref{oz2}
in Theorem $\ref{Th1}$ is exact.
\end{ex}
\begin{ex}\label{E4} Let $\mathbb{E}^2 = (\mathbb{R}^2,g)$ be
an Euclidean plane with an Euclidean metric $g$.
Let $\psi$ be the rotation of the Euclidean plane $\mathbb{E}^2$
around the point $0\in\mathbb{E}^2$ by the angle
$\delta=2\pi r.$ Denote by $\frak{I}(\mathbb{E}^2)$ the full isometry
group of $\mathbb{E}^2.$ It is well known that
$\frak{I}(\mathbb{E}^2) \cong O(2)\ltimes\mathbb{R}^2$.

Let $\rho\colon\pi_1(S^1,b)\cong\mathbb{Z}\to\frak{I}(\mathbb{E}^2)$ be
defined by the equality $\rho(1):=\psi,$ $1\in\mathbb{Z}.$ Then we
have the suspended Riemannian foliation
$(M, F):= Sus(\mathbb{E}^2,S^1,\rho).$ This foliation has a
unique closed leaf which is compact.

Due to $N(\Psi)=Z(\Psi)=O(2)$ Theorem \ref{Th7} is applicable.
Consequently, ${A}_B(M,F)\cong N(\Psi)/\Psi=O(2)/\Psi$.  Hence
${A}_B(M,F)$ admits a Lie group structure if and only if $\Psi$ is a
closed subgroup of $O(2)$ or, equivalent, when $\delta=2\pi r$ for
some rational number $r.$

If $\delta=2\pi r,$ where $r$ is a nonzero rational number, then
$A_B(M,F)\cong O(2).$
\end{ex}

\begin{ex}\label{E5}
Consider the standard $2$-dimension torus $\mathbb{T}^{2}=\mathbb{R}^{2}/\mathbb{Z}^{2}$
and call the pair of vectors  $e_{1}=\left( \begin{array}{ccc}
1\\
0\\
\end{array}\right),$ $e_{2}=\left(\begin{array}{ccc}
0\\
1\\
\end{array}\right)$ by the standard basis of the tangent vector space
$T_{x}\mathbb{T}^{2}$ with $x\in \mathbb{T}^{2}$.
Let $\Omega:\mathbb{R}^{2}\to \mathbb{T}^{2}$ be the quotient map,
which is the universal covering of the torus. Denote by $f_{A}$ the Anosov
automorphism of the torus $\mathbb{T}^{2}$ determined by the matrix
$A\in SL(2, \mathbb{Z})$, while by $E$ the identity $2\times 2$ matrix.

Let $g$~be the flat Lorentzian metric on the torus $\mathbb{T}^2$
given in the standard basis by the matrix $\lambda\left(
\begin{array}{ccc}
2&m\\
m&2\\
\end{array}\right),$ where $\lambda$~ is any non zero real number and
$m\in\mathbb Z$, $|m|>2.$
Introduce notations $\frak{I}(\mathbb{T}^2,g)$ for the full isometry group  of this
Lorentzian torus $(\mathbb{T}^2,g)$ and ${\frak I}_0(\mathbb{T}^2,g)$ for
the stationary subgroup of the group ${\frak I}(\mathbb{T}^2,g)$ at point
$0 = \Omega (0)$, $0 = (0,0)\in\mathbb R^2$. As is known (\cite{ZR}, Example 3),
$\frak{I}(\mathbb{T}^2,g) = \frak{I}_0(\mathbb{T}^2,g)\ltimes \mathbb Z \times\mathbb Z,$
where the group $\Phi_0: = \frak{I}_0(\mathbb{T}^2,g)$ is generated by
$f_A$, $f_{\widetilde{A}}$ and $-E$,  $A=\left(
\begin{array}{ccc}
m&1\\
-1&0
\end{array}
\right)$ and
$\widetilde{A}=
\left(
\begin{array}{ccc}
0&1\\
1&0\\
\end{array}
\right)$, hence ${\frak
I}(\mathbb{T}^2,g)\cong(\mathbb{Z}_2\times\mathbb{Z}_2\times\mathbb{Z})\ltimes T^2$.

Let $Q=S^{1}$  and $T =\mathbb T^2$.
 Define the group homomorphism
$\rho: \pi_1(S^{1})\cong \mathbb{Z} \to {\frak I}(\mathbb{T}^2,g)$ by the equality
$\rho(k): = (f_A)^{k}$, $k\in\mathbb{Z}$.
Then the suspended foliation $(M, F): = Sus(\mathbb{T}^{2},S^{1},\rho)$ is
Lorentzian, and its global holonomy group $\Psi$ is the group of all
transformations lying over the group $\Phi: = <f_A>$ relatively the
universal covering map $\Omega: \mathbb{R}^2\to \mathbb{T}^2$.

Elements of affine group $Aff(A^2)$ will be denoted by $<C,c>$, $C\in GL(2,\mathbb{R})$,
$c\in R^2$, in compliance with $Aff(A^2) = GL(2,\mathbb{R})\ltimes {R}^2$.
The composition of transformations
$<C,c>$ and $<D,d>$ from $Aff(A^2)$ has the following form
$<C,c> <D,d> = <C D, C d + c>$.

The check using the Proposi\-ti\-on~\ref{pr8.2} shows that
$\Psi = \Psi_0^0\ltimes (\mathbb Z\times\mathbb Z)$, where
the group $\Psi_0^0$ is generated by matrix $A$, i.e. $\Psi_0^0 \cong\Phi$. Let
$\Gamma: = \mathbb Z\times\mathbb Z \subset \frak{I}(\mathbb{E}^2)$.

Consider any $<C,c>\in N(\Psi)$, then
for every $<E,a>\in \Gamma$ there are\\ $<D,d>, <K,b>\in\Psi$ such that
\begin{equation} \label{q1}
<C,c> <E,a>=<D,d> <C,c>,
\end{equation}
\begin{equation} \label{q2}
<C,c> <K,d>=<E,a> <C,c>.
\end{equation}
Hence $D = E$, $K = E$ and $<C,c> \in N(\Gamma)$.
Consequently $N(\Psi) \subset N(\Gamma)$ and,
due to $\Gamma$ is the deck transformation group of $\Omega$,
by the statement $(2)$ of Proposition \ref{pr8.2}, the following map
\begin{equation}
\alpha:  N(\Psi)\to\frak{I}(\mathbb T^2,g): \widehat{h}\mapsto h,\nonumber
\end{equation}
where $\widehat{h}$ lies over $h$ relatively $\Omega: \mathbb{R}^2\to \mathbb {T}^2$,
is defined and it is a group homomorphism.

The relations \eqref{q1} and \eqref{q2} imply also that $<C,0>\in
N(\Gamma)$. Therefore $f_C\in \Phi_0: = \frak I_0(\mathbb {T}^2,g)$
and $C\in\Psi_0,$ where $\Psi_0$ is the subgroup of $N(\Psi)$
generated by matrixes $A$, $\widetilde A$ and $-E$, i.e. $\Psi_0
\cong\Phi_0$. Thus, the stationary subgroup $N(\Psi)_0$ at $0\in
\mathbb{R}^2$ of the normalizer $N(\Psi)$ is equal to $\Psi_0$.

Since $\alpha(\Psi) = \Phi$, then the group homomorphism $\alpha$ has the property
$\alpha(N(\Psi)) = N(\Phi)$.

Let us compute the normalizer $N(\Phi)$ of $\Phi$ in the group $\frak{I}(\mathbb{T}^2,g)$.
Take any $<D,d>$ from $N(\Phi)$. Then there is $k\in \mathbb Z \setminus \{0\}$
such that
\begin{equation}
<D,d> <A,0> = <A^k,0> <D,d>,\nonumber
\end{equation}
consequently $A^k d = d$, i.e. $1$ is the eigenvalue of $A^k$.
Since $A^k$ is an Anosov automorphism, then its eigenvalues
are irrational. Thus, it is necessary $d = 0,$
hence $N(\Phi)\subset\Phi_0$.  Observe that $A \widetilde{A} = \widetilde{A} A^{-1}$
and $A (-E) =  (-E) A$, therefore, $N(\Phi) = \Phi_0$.
Thus, $N(\Phi)/\Phi = \Phi_0/\Phi \cong\mathbb{Z}_2\times\mathbb{Z}_2$,
i.e. $N(\Phi)/\Phi\cong\mathbb{Z}_2\times\mathbb{Z}_2$.

By Theorem \ref{Th5} there is the group mono\-morphism
$\varepsilon: A_{B}(M, F)\to N(\Psi)/\Psi$ and the image $Im (\varepsilon)$ is
an open-closed subgroup  of $N(\Psi)/\Psi$. Show that $\varepsilon$ is a surjection.

Note that $N(\Psi)=\Psi_{0}\ltimes \mathbb{Z}\times\mathbb{Z}$ and the quotient group
$N(\Psi)/\Psi$ is generated by transformations $<\widetilde{A}, 0>$
and $<-E, 0>$ of $\mathbb{R}^2$. Every transformations of product
 $\mathbb{R}^{1}\times\mathbb{R}^2$ conserving this product may be written
as a pair $(g_{1}, g_{2})$, where $g_{1}\in Diff(\mathbb{R}^{1})$,
$g_{2}\in Diff(\mathbb{R}^{2})$, and $(g_{1}, g_{2})(t, z)=
(g_{1}(t),g_{2}(z))\,,\, (t,z)\in\mathbb{R}^{1}\times\mathbb{R}^{2}$.
 Let $\widetilde{G}=\{(\widetilde{g}_{1}, \widetilde{g}_{2})\}$
 be the group of covering  transformations  of the universal covering map
$\widetilde{\kappa}:\mathbb{R}^{1}\times\mathbb{R}^{2}\to M$, then
$\widetilde{g}_{1}:\mathbb{R}^{1}\to\mathbb{R}^{1}:t\mapsto t+m, \, m\in \mathbb{Z},\, \widetilde{g}_{2}\in \Psi $.
 Let $\widetilde{f}: \mathbb{R}^{1}\to\mathbb{R}^{1}: t\mapsto -t$
be the diffeomorphism of $\mathbb{R}^{1}$. Observe that
$(\widetilde{f},\widetilde{A})\in A(\mathbb{R}^{3}, \widetilde{F}_{tr}),$
where $\widetilde{F}_{tr}=\{\mathbb{R}^{1}\times\{z\}\,|\,z\in \mathbb{R}^{2}\}$,
and $(\widetilde{f},\widetilde{A})$ lies over $(\widetilde{A}, 0)$ of $\mathbb{R}^{2}$
relatively the canonical projection onto second multiplier
$\widetilde{r}:\mathbb{R}^{1}\times\mathbb{R}^{2}\to\mathbb{R}^{2}$.
Since $(f, A)\circ (\widetilde{f},\widetilde{A})=(\widetilde{f},\widetilde{A})\circ (f^{-1},A^{-1})$,
we see that $(\widetilde{f},\widetilde{A})\circ \widetilde{G}=\widetilde{G}\circ (\widetilde{f},\widetilde{A})$.
Hence, there exists $\widetilde{h}\in A(M, F) $ such that $(\widetilde{f},\widetilde{A})$
lies over $\widetilde{h}$ relative $\widetilde{\kappa}$. It means that
$\varepsilon(\widetilde{h}\cdot A_{L}(M, F))=<\widetilde{A}, 0>$.
The existence $h'\in A(M, F)$ such that $\varepsilon({h'}\cdot A_{L}(M, F))=<-E, 0>$
is cheeked by the same way as in the proof of Theorem \ref{Th7}. Therefore
the group homomorphism $\varepsilon: {A_{B}(M, F)\to N(\Psi)/\Psi}$
is surjection, hence $\varepsilon$ is the group isomorphism.

Due to the following  chain  of group isomorphisms $A_B(M,F)\cong N(\Psi)/\Psi\\
\cong (N(\Psi)/\Gamma)/(\Psi/\Gamma) \cong N(\Phi)/\Phi \cong\mathbb{Z}_2\times\mathbb{Z}_2$,
we have
\begin{equation}
A_B(M,F)\cong\mathbb{Z}_2\times\mathbb{Z}_2.\nonumber
\end{equation}
\end{ex}

\begin{rem} It is well known (see, for example, \cite{Nit}, Lemma 3.3) that
the set of periodic orbits of a Anosov automorphism of the torus
$\mathbb{T}^2$ is countable. Therefore the foliation $(M, F)$
constructed in Example~\ref{E5} has a countable set of closed leaves
and according to the item $(b)$ of Theorem \ref{Th1} its basic automorphism group
$A_B(M,F)$ is a discrete Lie group. Our result
$A_B(M,F)\cong\mathbb{Z}_2\times\mathbb{Z}_2\nonumber$ illustrates
this assertion.
\end{rem}

\end{document}